\numberwithin{equation}{section}
\theoremstyle{definition}
\newtheorem{definition}{Definition}[section]
\theoremstyle{remark}
\newtheorem{remark}[definition]{Remark}
\theoremstyle{plain}
\newtheorem{theorem}[definition]{Theorem}
\newtheorem{lemma}[definition]{Lemma}
\newtheorem{proposition}[definition]{Proposition}
\newcommand{\vk}{\varkappa}
\newcommand{\al}{\alpha}
\newcommand{\lm}{\lambda}
\newcommand{\Lm}{\Lambda}
\newcommand{\tht}{\theta}
\newcommand{\no}{\nonumber}
\newcommand{\gm}{\gamma}
\newcommand{\pat}{\partial_t}
\newcommand{\pan}{\partial_\mathbf{n}}
\newcommand{\td}{\text{div}}
\newcommand\ba[1]{\overline{#1}}
\newcommand\io{\int_\Omega}
\newcommand{\mz}{\boldsymbol{\zeta}}
\newcommand{\bw}{\boldsymbol{w}}
\newcommand{\bu}{\boldsymbol{u}}
\newcommand{\bv}{\boldsymbol{v}}
\newcommand{\bg}{\boldsymbol{g}}
\newcommand{\bz}{\boldsymbol{z}}
\newcommand{\be}{\boldsymbol{e}}
\newcommand{\by}{\boldsymbol{y}}
\newcommand{\tphi}{\partial_t\varphi}
\newcommand{\vphi}{\varphi}
\newcommand{\ms}{\mathcal{S}}
\newcommand{\J}{\mathcal{J}}
\newcommand\V{\mathcal{V}}
\newcommand\W{\mathcal{W}}
\newcommand{\U}{\mathcal{U}}
\newcommand{\MH}{\mathbf{H}}
\newcommand{\MV}{\mathbf{V}}
\newcommand{\ML}{\mathbb{L}}
\def\dx{\,\textnormal{d}x}
\newcommand{\RR}{\mathbb{R}^2}
\newcommand{\rea}{\mathbb{R}}
\begin{document}
	\title[Optimal Control ]{Optimal Control of a Higher-order Cahn-Hilliard equation coupled with Brinkman equation}
	\author [Manika Bag]{Manika Bag\textsuperscript{1*}}


	\subjclass[2020]{35K35, 35Q35, 49K20, 76D07, 93C20. }
\keywords{ Brinkman equation, sixth-order Cahn-Hilliard equation, optimal control, sparsity, first-order optimality condition.}
\date{\today}
\thanks
{
\textsuperscript{1} School of Mathematics,
			Indian Institute of Science Education and Research, Thiruvananthapuram (IISER-TVM),
			Maruthamala PO, Vithura, Thiruvananthapuram, Kerala, 695551, INDIA.  
			\textit{e-mail:} \texttt{manikabag19@iisertvm.ac.in, manikbag058@gmail.com}\\		
\textsuperscript{*}  Corresponding author\\
{\textbf{Acknowledgments:}} The author would like to thank the  Indian Institute of Science Education and Research, Thiruvananthapuram, for providing financial support and stimulating  research environment.  
}

\begin{abstract}
 In this work, we investigate optimal control of a Brinkman equation couple with sixth-order Cahn-Hilliard equation. The Cahn-Hilliard equation is endowed with a source term accounting for mass exchange and the velocity equation contains a non divergence-free forcing term, which act as distributed control variable. We consider the aforementioned system with constant mobility, viscosity and nonlinearity of double-well shape is regular. The cost functional of the optimal control problem contains a nondifferentiable term like the $L^1$-norm with sparsity constant $\kappa$, which leads to sparsity of optimal controls. We study the first order necessary optimality condition for both the case $\kappa=0$ and $\kappa>0.$ When the cost functional is differentiable, first order necessary optimality conditions are characterized by Lagrange multiplier method and for nondifferentiable case we have used the idea of Casas and Tr\"oltzsch from the paper (Math. control Relat. Fields, 10(3):527-546, 2020).  
\end{abstract}

\maketitle


\section{Introduction}
In recent years, sixth-order Cahn--Hilliard equations have attracted considerable attention. 
Models of this type appear in several physical contexts, including phase separation with pronounced anisotropy effects, atomistic descriptions of crystal growth, the evolution of crystalline surfaces with small slopes exhibiting faceting phenomena, ternary systems such as oil--water--surfactant mixtures, and blends of polymeric materials.  
We also refer to \cite{WWL09, WW11,SP13, SPsima13,SW20}, for further analytical and numerical investigations of these higher-order models. When coupled with fluid dynamics, these models offer a robust framework for analyzing the interaction between interfacial evolution, transport mechanisms, and hydrodynamic effects. In this article we are interested in one such coupling where we consider the Brinkman equation couple with sixth-order Cahn--Hilliard equations.

\subsection{Model and problem description.} Let $\Omega\subset\rea^3$ is a bounded and connected open set with smooth boundary $\Gamma$ and unit outward normal $\mathbf{n}$. Further, let $T>0$ denote some final time and we set
$Q=\Omega\times(0, T), \quad\Sigma=\Gamma\times(0, T)$.In this article we study the following optimal control problem:
\begin{align*}
    \textbf{(CP)}_\eta \quad \min_{\bg\in\U_{ad}}\J(\bv,\vphi, \bg)
\end{align*} 
subject to
\begin{equation}\label{CHB}
    \left\{
    \begin{aligned}
       &- \eta\Delta\bv+\lm(\vphi)\bv+\nabla p=\mu\nabla\vphi+\bg, \quad \td ~ \bv=0, \quad \text{in } Q, \\
       &\tphi+\bv\cdot\nabla\vphi-\td(m(\vphi)\mu)=S(\vphi),  \quad \text{in } Q,\\
       &\mu= -\Delta w+f'(\vphi)w+\nu w,  \quad \text{in } Q,\\
       &w= -\Delta\vphi+f(\vphi),
    \end{aligned}
    \right.
\end{equation}
complemented with boundary and initial conditions 
\begin{align}
    &\bv=0, \text{ and } \pan\mu=\pan w=\pan\vphi= 0  \quad \text{ on } \Sigma,\label{bdry CHB}\\
    &\vphi(0)=\vphi_0,  \quad \text{in } \Omega.\label{in CHB}
    \end{align}
In the above $\J$ is a quadratic cost functional and $\U_{ad}$ is admissible control; precisely define as below
\begin{enumerate}
    \item[(1)] $\beta_i$ and $\kappa$ are nonnegative real numbers for $i=1,2,3, 4.$
    \item[(2)] $\bv_Q\in L^2(0, T; \MH)$, $\vphi_Q\in L^2(Q)$ and $\vphi_T\in H$. 
\end{enumerate}
Furthermore, we define the cost functional, the control space and admissible control space by setting
\begin{align}
  & \J(\bv,\vphi,\bg)=\frac{\beta_1}{2}\int_Q|\bv-\bv_Q|^2+\frac{\beta_2}{2}\int_Q|\vphi-\vphi_Q|^2 +\frac{\beta_3}{2}\int_\Omega|\vphi(T)-\vphi_T|^2+\frac{\beta_4}{2}\int_Q|\bg|^2+\kappa\int_Q|\bg|\label{cost functional}\\
  &\qquad\qquad\quad=J(\bv, \vphi, \bg)+\kappa j(\bg),\no\\
  & \U=L^2(0, T; \MH), \text{ and } \U_{ad}:=\{\bg\in\U:\|\bg\|_{\U}\leq M\}, \text{ $M$ being a positive constant}\label{contol sp}.\end{align} The corresponding functional spaces will be specified in section 2.

  The model \eqref{CHB} is studied in \cite{CGSS25}. The velocity field \(\bv\) is governed by a Brinkman-type equation see $\eqref{CHB}_1$.
The scalar field \(\vphi\), which acts as an order parameter, denotes the local fraction
of one component in a binary mixture. For analytical convenience, it is typically scaled
so that the pure phases correspond to \(\vphi = -1\) and \(\vphi = 1\), whereas values
in \((-1,1)\) represent the diffuse interfacial region. This interfacial layer is
concentrated in a tubular neighborhood of the physical interface and has thickness
proportional to a small parameter \(\varepsilon > 0\).

The dynamics of \(\vphi\) are described by a sixth-order Cahn--Hilliard-type equation,
modified by a nonconservative source term
\[
S(\vphi) = -\sigma \vphi + h(\vphi),
\]
where \(h\) is smooth and bounded and \(\sigma \in \mathbb{R}\). This term introduces
mass exchange and therefore destroys the standard mass-conservation property of \(\vphi\).

The quantities \(\mu\) and \(w\) appearing in $\eqref{CHB}_3-\eqref{CHB}_4$ correspond to the variational
derivatives of the total free energy \(\mathcal{E}\) and the Ginzburg--Landau energy \(\mathcal{G}\),
respectively:
\[
\mu := \frac{\delta \mathcal{E}}{\delta \vphi},
\qquad
w := \frac{\delta \mathcal{G}}{\delta \vphi},
\]
where
\[
\mathcal{E}(\vphi)
=
\frac{1}{2}
\int_{\Omega}
\left(-\varepsilon \Delta \vphi + \frac{1}{\varepsilon} f(\vphi)\right)^{2}
+ 
\nu \int_{\Omega}
\left(
\frac{\varepsilon}{2} |\nabla \vphi|^{2}
+
\frac{1}{\varepsilon} F(\vphi)
\right).
\tag{1.8}
\]

Here, \(F\) is a double-well potential defined on \(\mathbb{R}\), \(f = F'\), $\eta>0$ is viscosity coefficient and
\(\nu \in \mathbb{R}\) is a (possibly nonpositive) constant. A typical example is the
classical quartic potential
\[
F(s) = \frac{1}{4}(s^{2} - 1)^{2}, \qquad s \in \mathbb{R}.
\tag{1.9}
\]
The function \(\bg\) in $\eqref{CHB}_1$ acts as an optimal control. $\eta$ and $m$ denotes the viscosity and mobility of the fluids. The nonlinear term
\(\mu \nabla \vphi\) represents the Korteweg force, which accounts for capillarity effects
in the mixture. Due to the smoothness of \(f\), the boundary condition
\(\partial_{\mathbf{n}} w = 0\) in (1.5) is equivalent to
\(\partial_{\mathbf{n}} \Delta \vphi = 0\) on \(\Sigma\).
\subsection{State of the art and novelty.} The Brinkman equation coupled with the classical fourth-order Cahn–Hilliard dynamics has been extensively investigated in the literature. Without aiming to provide a complete overview, we briefly highlight several relevant contributions. In \cite{BCG15}, the authors established well-posedness, the existence of a global attractor, and the convergence of weak solutions to single equilibria for the Cahn–Hilliard–Brinkman (CHB) system. Optimal distributed control for the same model was later addressed in \cite{YL19}. Further analytical results, including the study of dynamic boundary conditions, can be found in \cite{LY21, CKSS24}. Within the context of tumor-growth modeling, related formulations of the CHB system have been analyzed in \cite{CGSS23, KS22, EG19, EK19, EGN21}. Nonlocal variants of the model were explored in \cite{DFG16, DM21}, where the former deals with well-posedness and the latter focuses on regularity properties and associated optimal control problems. The classical Cahn-Hilliard equation also studied with different other coupling such as Navier-Stokes and Brinkman equation \cite{arma, MTS, BDM24, EG19, EGN21, EK19}, Hele-Shaw equation \cite{WZ13, GGW18}, Boussinesq equation \cite{MGS17} etc.

We next turn to the sixth-order Cahn–Hilliard equation incorporating curvature effects. Well-posedness results for this higher-order system, under various choices of potentials and mobilities, are presented in \cite{M15, SW20, MZ20, Z21}, while optimal control aspects have recently been investigated in \cite{CGSS23}. Analytical studies of the sixth-order Cahn–Hilliard dynamics coupled with fluid flow models have also appeared, including the coupling with the Navier–Stokes equations \cite{SW25} and with the Brinkman equation \cite{CGSS25}.

None of the optimal control paper cited above is concerned with the aspect of \emph{sparsity}, i.e. the possibility that locally optimal control may vanish in subregion of positive measure of the space time cylinder $Q$ that are controlled by sparsity parameter $\kappa.$ The sparsity structure follows from the variational inequality arising in the first-order necessary optimality conditions, together with the particular form of the subdifferential $\partial j$. In this work, our interest lies in sparsity effects, and for simplicity, we restrict our attention to the case of full sparsity associated with the $L^1(Q)$- norm functional $j$ introduced in \eqref{cost functional}.

Sparsity in optimal control for partial differential equations has emerged as a highly active research area. The introduction of sparsity-promoting functionals can be traced back to inverse problems and image processing. A landmark contribution was made in the seminal study \cite{S09} on elliptic control problems, which ignited the investigation of sparsity in PDE-based optimal control. After that, numerous results on sparse optimal controls for PDEs have appeared. For instance, sparse control problems for the viscous Cahn–Hilliard equation, the Allen–Cahn equation, and certain tumor growth models are addressed in \cite{CST24, ST24, ST23, H23}. To the best of our knowledge, however, sparse optimal control for the Brinkman flow coupled with a curvature-driven sixth-order Cahn–Hilliard equation has not been investigated in the literature. The present work appears to be the first to consider this setting.

In this article, we study the associated optimal control problem and incorporate an $L^1(Q)$-norm of the control into the cost functional in order to capture sparsity effects. We begin by establishing the differentiability of the control-to-state operator and then derive the optimality system using a Lagrange multiplier framework. Employing standard arguments, we show the existence of an optimal control for problem \textbf{(CP)} and obtain the first-order necessary optimality conditions. Because of the intrinsic complexity of the sixth-order Cahn–Hilliard structure, second-order sufficient optimality conditions are not addressed here and will be treated in a separate work.
\subsection{Organization of the paper.}
In the upcoming section, we will provide some preliminary results, comprehensive list of specific assumption on system parameters and recall the well-posedness result. The differentiability of control-to-state operator is studied in section 3. The control problem for $\textbf{(CP)}_\eta$ is investigated in section 4 for both smooth and non smooth cost functional.

We close this section by introducing some convention concerning constants used in estimated within the paper. The constants $C$ and $c$ denotes any positive constants that depends only on given data occurring in the state system, cost functional, system parameters such as $\eta, \lm, \nu$, as well as constant $M$ that appear in the definition of $\U_{ad}.$ The actual value of such generic constants may changes from formula to formula and sometimes even within the formula. Moreover, $c_{a,b}$ indicates a positive constant that additionally depends on the quantity $a, b.$ 
\section{Notation, assumption and preliminary}
 For any Banach space $X$, we denote $\|\cdot\|_X$ and $X'$ as the corresponding norm and its dual space with few exceptional notation listed below. The classical Lebesgue and Sobolev space on $\Omega$ corresponding to each $1\leq p<\infty$ and $k\geq 0$ are denoted by $L^p(\Omega)$, $W^{k,p}(\Omega)$. Their associated norms are denoted as $\|\cdot\|_{L^p}$, $\|\cdot\|_{W^{k,p}}$, and $\|\cdot\|_\infty$ denots the $L^\infty-$norm. For $p=2$, we simply denote $\|\cdot\|$ as $L^2-$norm. Also we set 
$$H=L^2(\Omega), \, V=H^1(\Omega), \, W:=\{z\in H^2(\Omega):\pan z=0 \text{ on }\Gamma\},$$
\begin{align*}
    &\mathbb{C}_c^\infty(\Omega) := \big\{\bv \in C^\infty(\Omega:\mathbb{R}^3): \text{supp $\bv$ is compact}\big\}, \,  \mathbb{D}_\sigma(\Omega) := \big\{\bv \in \mathbb{C}_c^\infty(\Omega) : \text{div}~\bv = 0\big\},\\
    &\mathbb{L}^p_{\sigma}(\Omega) := \text{closure of $\mathbb{D}_\sigma(\Omega)$ in $\mathbb{L}^p(\Omega$}), \,
    \MV^s_{\mathrm{div}}(\Omega) := \text{closure of $\mathbb{D}_\sigma(\Omega)$ in $\mathbb{W}^{s,2}(\Omega)$} \quad \text{for $s > 0$}.
\end{align*}
We denote the Hilbert spaces $\mathbb{L}^2(\Omega;\rea^3)$ by $\MH$, $ \MV^1_{\mathrm{div}}(\Omega)$ by $\MV_{\td}.$ The inner product  and norm in $\MH$ is denoted by $(\cdot,\cdot)$ and $\|\cdot\|$, respectively. The duality between $\MV_{\td}$ and $\MV'_{\td}$ or $V$ and $V'$ by $\langle\cdot,\cdot\rangle.$ We also recall the standard notation for scaler product and the norm of matrices, namely
$$A:B=\sum_{i,j=1}^2a_{ij}b_{ij}, \, \text{ and } |A|^2=A:A \text{ for } A=(a_{ij}), B=(b_{ij})\in \RR.$$
We now list down some assumption on $\eta, \lm, m$, and $F$ as follows
\begin{itemize}
    \item[(A1)] $\eta$ is positive constant and $m(\vphi)\equiv 1$.
    \item[(A2)] $\lm:\mathbb{R}\to\mathbb{R}$ are locally Lipschitz continuous and positive, i.e. $ 0<\lm_\ast\leq\lm(s)\leq\lm_\ast,$  for every $s\in\mathbb{R}$ and some positive constants $\lm_\ast,\lm^\ast$.
    \item[(A3)] $\nu, \sigma\in\mathbb{R}$ and $S:\mathbb{R}\to\mathbb{R}$ is given by $S(\vphi)=-\sigma\vphi+h(\vphi)$, where $h:\mathbb{R}\to\mathbb{R}$ is bounded class $C^2$ and Lipschitz continuous.
    \item[(A4)]  $F:\mathbb{R}\to\mathbb{R}$ is of class $C^5$ and $f:=F'$ denotes its derivative;$$\lim_{|s|\to +\infty}\frac{f(s)}{s}=+\infty.$$
    \item[(A5)] $f'(s)\geq C_1$, $|F(s)|\leq C_2(|sf(s)|+1)$, and $|sf'(s)|\leq C_3(|f(s)|+1),$ for every $s\in \mathbb{R}$ and some positive constants $C_1, C_2, C_3.$ 
\end{itemize} 
We note that the last inequality in (A5) and continuity of $f'$ imply that
$$|f'(s)|\leq C_3'(|f(s)|+1), \quad \text{for all } s\in\rea,$$ for some constant $C_3'>0.$\\
We recall some useful inequalities that is used in the paper frequently. 

\noindent \textbf{Young's inequality.} Let $a,b$ be any non-negative real numbers. Then for any $\varepsilon>0,$ the following inequalities hold:
\begin{align} \label{eqPR-YoungIneq}
	ab \le \frac{\varepsilon a^2}{2} + \frac{b^2}{2\varepsilon} \text{ and } ab \le \frac{a^p}{p}+\frac{b^q}{q},
\end{align}
for any $p,q>1$ such that $\frac{1}{p}+\frac{1}{q}=1.$
\begin{proposition}[Generalized H\"older's inequality]{} \label{ppsPR-GenHoldIneq}
	Let $f\in L^p(\Omega),$ $g\in L^q(\Omega),$ and $h\in L^r(\Omega),$ where $1\le p, q ,r\leq \infty$ are such that $\frac{1}{p}+\frac{1}{q}+\frac{1}{r}=1.$ Then $fgh\in L^1(\Omega)$ and 
	\begin{align*}
		\|fgh\|_{L^1(\Omega)} \le \|f\|_{L^p(\Omega)} \|g\|_{L^q(\Omega)} \|h\|_{L^r(\Omega)}.
	\end{align*}
  \end{proposition}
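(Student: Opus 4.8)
The plan is to reduce the three-function inequality to the classical two-function Hölder inequality applied twice, and to obtain the two-function version directly from Young's inequality \eqref{eqPR-YoungIneq}. Throughout one may assume that all three norms $\|f\|_{L^p(\Omega)}$, $\|g\|_{L^q(\Omega)}$, $\|h\|_{L^r(\Omega)}$ are finite and strictly positive, since otherwise the claimed bound is either vacuous or trivially an equality/inequality with a vanishing right-hand side.

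First I would dispose of the degenerate cases in which one of the exponents is $\infty$. If $r=\infty$ then $\tfrac1p+\tfrac1q=1$ and one estimates $\|fgh\|_{L^1(\Omega)}\le\|h\|_{L^\infty(\Omega)}\,\|fg\|_{L^1(\Omega)}$, so everything follows from the two-function case; the possibilities $p=\infty$ or $q=\infty$ are handled symmetrically, and the case of two infinite exponents is immediate. For the two-function inequality with conjugate pair $p,q$ and $F\in L^p(\Omega)$, $G\in L^q(\Omega)$, homogeneity lets me assume $\|F\|_{L^p(\Omega)}=\|G\|_{L^q(\Omega)}=1$; then the pointwise bound $|F(x)G(x)|\le \tfrac1p|F(x)|^p+\tfrac1q|G(x)|^q$ from Young's inequality, integrated over $\Omega$, gives $\|FG\|_{L^1(\Omega)}\le \tfrac1p+\tfrac1q=1$, which is the desired estimate.

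Now assume all exponents finite and set $s:=\tfrac{pq}{p+q}$, so that $\tfrac1s=\tfrac1p+\tfrac1q\in(0,1]$, hence $s\ge 1$ and $\tfrac1s+\tfrac1r=1$. Since $\tfrac sp+\tfrac sq=1$, the exponents $\tfrac ps$ and $\tfrac qs$ form a conjugate pair, and applying the two-function inequality to $|f|$ and $|g|$ with that pair shows $fg\in L^s(\Omega)$ with $\|fg\|_{L^s(\Omega)}\le\|f\|_{L^p(\Omega)}\|g\|_{L^q(\Omega)}$. A second application of the two-function inequality, this time to $fg\in L^s(\Omega)$ and $h\in L^r(\Omega)$ with the conjugate pair $s,r$, yields $fgh\in L^1(\Omega)$ and $\|fgh\|_{L^1(\Omega)}\le\|fg\|_{L^s(\Omega)}\|h\|_{L^r(\Omega)}\le\|f\|_{L^p(\Omega)}\|g\|_{L^q(\Omega)}\|h\|_{L^r(\Omega)}$, which is the assertion.

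The only point requiring care is purely bookkeeping rather than analysis: verifying that $s\ge1$ so that $L^s(\Omega)$ is a bona fide Lebesgue space, that $\tfrac ps,\tfrac qs$ and $s,r$ are genuine conjugate pairs, and that the boundary cases with one or more exponents equal to $\infty$ are covered; no real obstacle is expected.
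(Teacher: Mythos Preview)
Your proof is correct and follows the standard route: derive the two-function H\"older inequality from Young's inequality \eqref{eqPR-YoungIneq}, then iterate it via the intermediate exponent $s=\tfrac{pq}{p+q}$, with the degenerate infinite-exponent cases handled separately. The paper itself does not supply a proof of this proposition --- it is merely recalled as a preliminary fact --- so there is no approach to compare against; your argument is the textbook one and nothing further is needed.
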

 \begin{lemma}[{General Gronwall inequality \cite[p. 1647]{Canon99}}] \label{lem:Gronwall} 
	Let $f,g, h$ and $y$ be four locally integrable non-negative functions on $[t_0,\infty)$ such that
	\begin{align*}
		y(t)+\int_{t_0}^t f(s)ds \le C+ \int_{t_0}^t h(s)ds + \int_{t_0}^t g(s)y(s)ds\  \text{ for all }t\ge t_0,
	\end{align*}
	where $C\ge 0$ is any constant. Then 
	\begin{align*}
		y(t)+\int_{t_0}^t f(s)ds \le \left(C+\int_{t_0}^t h(s)ds\right) \exp\left( \int_{t_0}^t g(s)ds\right) \text{ for all }t\ge t_0.
	\end{align*}
\end{lemma}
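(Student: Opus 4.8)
The plan is to run the classical integrating-factor argument, isolating the ``bad'' term $\int_{t_0}^t g(s)y(s)\,ds$ and absorbing it by a Gronwall-type bootstrap. First I would set
\[
R(t):=C+\int_{t_0}^t h(s)\,ds,
\qquad
G(t):=\int_{t_0}^t g(s)y(s)\,ds,
\]
and record two elementary facts: $R$ is nonnegative and nondecreasing because $h\ge 0$; and, since $f\ge 0$, the hypothesis gives the pointwise bound $y(t)\le R(t)+G(t)$ for every $t\ge t_0$. One may assume $gy$ is locally integrable on $[t_0,\infty)$, for otherwise $G\equiv+\infty$ on a tail and there is nothing to prove; then $G$ is locally absolutely continuous with $G(t_0)=0$ and
\[
G'(t)=g(t)y(t)\le g(t)R(t)+g(t)G(t)
\qquad\text{for a.e. }t\ge t_0 .
\]

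Next I would multiply this differential inequality by the integrating factor $m(t):=\exp\bigl(-\int_{t_0}^t g(s)\,ds\bigr)$, which is nonincreasing, satisfies $m(t_0)=1$, and obeys $m'(t)=-g(t)m(t)$ a.e. This gives $(mG)'(t)\le m(t)g(t)R(t)$ a.e., and integrating over $[t_0,t]$ (using $G(t_0)=0$) yields $m(t)G(t)\le\int_{t_0}^t m(s)g(s)R(s)\,ds$. Because $R$ is nondecreasing and $m,g\ge 0$, and using the identity $\int_{t_0}^t m(s)g(s)\,ds=1-m(t)$ (immediate from $m'=-gm$), the right-hand side is bounded by $R(t)\bigl(1-m(t)\bigr)$, whence
\[
G(t)\le R(t)\bigl(m(t)^{-1}-1\bigr)=R(t)\Bigl(\exp\!\Bigl(\int_{t_0}^t g(s)\,ds\Bigr)-1\Bigr).
\]

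Finally, combining this with $y(t)+\int_{t_0}^t f(s)\,ds\le R(t)+G(t)$ (keeping the $\int f$ term on the left since $f\ge 0$) gives
\[
y(t)+\int_{t_0}^t f(s)\,ds\le R(t)+R(t)\Bigl(\exp\!\Bigl(\int_{t_0}^t g(s)\,ds\Bigr)-1\Bigr)=R(t)\exp\!\Bigl(\int_{t_0}^t g(s)\,ds\Bigr),
\]
which is precisely the asserted estimate once $R(t)$ is written out. The only point requiring care is the regularity bookkeeping — justifying the absolute continuity of $G$ so that the fundamental theorem of calculus applies to $mG$, and dispatching the degenerate cases in which $\int_{t_0}^t g$ or $\int_{t_0}^t gy$ is infinite (in both of which the conclusion is trivial); the algebraic core of the argument is a one-line manipulation. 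Since the statement is entirely standard, the alternative is simply to invoke \cite{Canon99}.
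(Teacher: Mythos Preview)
The paper does not prove this lemma at all: it is stated as a recall with the citation \cite[p.~1647]{Canon99} and no proof is given. Your argument is the standard integrating-factor proof and is correct; it supplies a self-contained justification where the paper simply defers to the reference, which is exactly the alternative you mention in your last sentence.
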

We also account for the Sobolev and Poincaré inequalities, as well as for some inequalities associated with the elliptic regularity theory and the compact embeddings between Sobolev spaces (via Ehrling’s lemma). More precisely, the following estimates hold:
\begin{align}
&\|z\|_{q} \leq C_S \|z\|_{V}, &&\text{for every } z \in V \text{ and } q \in [1,6], \label{sobolev_ineq}\tag{2.44}\\[4pt]
&\|z\|_{\infty} \leq C_S \|z\|_{W}, &&\text{for every } z \in W, \tag{2.45}\\[4pt]
&\|z\|_{V} \leq C_P(\|\nabla z\| + |\overline{z}|), &&\text{for every } z \in V, \tag{2.46}\\[4pt]
&\|z\|_{W} \leq C_E (\|\Delta z\| + \|z\|), &&\text{for every } z \in W, \tag{2.47}\\[4pt]
&\|z\|_{H^4(\Omega)} \leq C_E (\|\Delta^2 z\| + \|z\|), &&\text{for every } z \in H^4(\Omega) \text{ with } z,\, \Delta z \in W, \tag{2.48}\\[4pt]
&\|z\|_{V} \leq \delta \|\Delta z\| + C_{\delta}\|z\|, &&\text{for every } z \in W \text{ and every } \delta>0, \tag{2.49}\\[4pt]
&\|z\|_{H^3(\Omega)} \leq \delta \|\Delta^2 z\| + C_{\delta}\|z\|_{V}, &&\text{for every } z \in H^4(\Omega) \text{ with } z,\, \Delta z \in W \text{ and every } \delta>0. \tag{2.50}\label{GN-H3}
\end{align}
Now we state some necessary results from \cite{CGSS25}. First we recall the definition of weak solution for the system \eqref{CHB}-\eqref{in CHB}.
\begin{definition}
    A quadruplet $(\bv, \vphi, \mu, w)$ with the regularity
    \begin{equation}\label{reg1}
        \left\{
    \begin{aligned}
        &\bv\in L^2(0, T; \MV_{\td}),\\
        &\vphi\in H^1(0, T; V')\cap L^\infty(0, T; W)\cap L^2(0, T; H^5(\Omega)),\\
        &\mu\in L^2(0, T; V),\\
        & w\in L^\infty(0, T; H)\cap L^2(0, T; H^3(\Omega)\cap W),
    \end{aligned} 
    \right.
     \end{equation}is said to be weak solution of \eqref{CHB} if that solves the variational inequality
    \begin{align}
&\int_\Omega\big(\eta D\bv:\nabla\mz +\lm(\vphi)\bv\cdot\mz\big)=\int_\Omega\big(\mu\nabla\vphi+\bg)\cdot\mz \text{ for every } \mz\in\MV_{\td} \text{ and a.e. in } (0, T),\label{v weak form}\\
&\langle\tphi, z\rangle+\int_\Omega\bv\cdot\nabla\vphi z+\int_\Omega \nabla\mu\cdot\nabla z = \int_\Omega S(\vphi)z \text{ for every } z\in V \text{ and a.e. in } (0, T),\\
& \al\int_\Omega(\nabla w\cdot\nabla z+f'(\vphi)wz)+\nu\int_\Omega wz =\int_\Omega\mu z \text{ for every } z\in V \text{ and a.e. in } (0, T),\\
& \int_\Omega\nabla\vphi\cdot\nabla z +\int_\Omega f(\vphi)z=\int_\Omega wz \text{ for every } z\in V \text{ and a.e. in }\label{w weak form} (0, T),
    \end{align} as well as the initial condition \begin{align}
        \vphi(0)=\vphi_0.
    \end{align}
\end{definition}  
Next we state the well-posedness results for the system that studied in \cite{CGSS25}.
\begin{theorem}\cite[Theorem 2.1, Theorem 2.3]{CGSS25}\label{CHB-wellposed}
\begin{itemize}
    \item[(i)]    Let $\bg\in L^2(0, T; \MH)$ and $\vphi_0\in W.$ 
Then, under the assumptions \((A1)–(A5)\)  there exists at least one quadruplet 
\((\bv, \vphi, \mu, w)\) that fulfills the regularity requirements \eqref{reg1}, solves Problem \eqref{v weak form}-\eqref{w weak form}, and satisfies the estimate
\begin{align}
    \|\bv\|_{L^2(0,T;\MV_{\td})} 
+ &\|\vphi\|_{H^1(0,T;V') \cap L^\infty(0,T;W) \cap L^2(0,T;H^5(\Omega))} 
+ \|\mu\|_{L^2(0,T;V)} 
\nonumber\\&\quad+ \|w\|_{L^\infty(0,T;H) \cap L^2(0,T;H^3(\Omega) \cap W)} 
\le K_1,\label{unif_est_weak sol}
\end{align}
with a constant \(K_1\) that depends only on the structure of the system, \(\Omega\), \(T\), and an upper bound for the norms of the data related to \(\bg\) and \(\vphi_0\).
\item[(ii)] 
Let  us suppose that \(\eta\) and \(m\) are positive constants. 
Assume further that condition \(\vphi\in W\) holds for the initial datum.  
For any \(\bg_i \in L^2(0,T;H)\), \(i=1,2\), let \((\bv_i, \vphi_i, \mu_i, w_i)\) denote the corresponding solutions.  
Then the following stability estimate is valid:
\begin{align}\label{cont-depend-est}
    \|\bv\|_{L^2(0,T;\MV_{\td})} &
+ \|\vphi\|_{C^0([0,T];V) \cap L^2(0,T;H^4(\Omega))} 
+ \|\mu\|_{L^2(0,T;H)} 
+ \|w\|_{L^2(0,T;W)} 
\no\\&\quad\le K_2 \|\bg\|_{L^2(0,T;H)},
\end{align}
where \((\bv, \vphi, \mu, w) = (\bv_1-\bv_2, \vphi_1-\vphi_2, \mu_1-\mu_2, w_1-w_2)\) and \(\bg = \bg_1 - \bg_2\).  
The constant \(K_2\) depends only on the structure of the system, the domain \(\Omega\), the final time \(T\), the initial datum \(\varphi_0\), and an upper bound for the \(L^2(0,T;H)\)-norms of \(\bg_1\) and \(\bg_2\).
\end{itemize}
\end{theorem}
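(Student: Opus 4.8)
This statement is taken from \cite{CGSS25}, so a proof here would reconstruct their argument; the plan is as follows.

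\textbf{Existence (part (i)).} I would set up a Faedo--Galerkin approximation, expanding $\vphi,\mu,w$ in the eigenbasis of the Neumann Laplacian (which simultaneously approximates $H$, $V$ and $W$) and $\bv$ in a Galerkin basis of $\MV_{\td}$. Because the Brinkman relation \eqref{v weak form} carries no time derivative, the discrete velocity is, at each instant, the unique solution of a linear system in terms of the discrete $(\vphi,\mu,\bg)$ --- invertibility and coercivity coming from (A1)--(A2) --- so eliminating it reduces the scheme to an ODE system for the $\vphi$/$w$ coefficients, locally solvable by Cauchy--Lipschitz. Globality and the bound \eqref{unif_est_weak sol} would then come from uniform a priori estimates, after which weak/weak-$\ast$ compactness together with the Aubin--Lions lemma --- giving $\vphi_n\to\vphi$ strongly in $L^2(0,T;W)$ and a.e., hence convergence of $f(\vphi_n),f'(\vphi_n),S(\vphi_n),\lambda(\vphi_n)$ and of the transport term $\bv_n\cdot\nabla\vphi_n$ --- would let me pass to the limit in the Galerkin identities and recover \eqref{v weak form}--\eqref{w weak form} and the initial condition.

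\textbf{A priori estimates.} Testing \eqref{v weak form} with $\bv$, the $\vphi$-equation with $\mu$, and adding, the Korteweg term $\int_\Omega\mu\nabla\vphi\cdot\bv$ and the transport term $\int_\Omega(\bv\cdot\nabla\vphi)\mu$ cancel identically; since $\mu=\delta\mathcal{E}/\delta\vphi$, this leaves
\[
\frac{d}{dt}\mathcal{E}(\vphi)+\eta\|D\bv\|^2+\int_\Omega\lambda(\vphi)|\bv|^2+\|\nabla\mu\|^2=\int_\Omega\bg\cdot\bv+\int_\Omega S(\vphi)\,\mu.
\]
Expanding $\mathcal{E}(\vphi)=\tfrac{\eps^2}{2}\|\Delta\vphi\|^2+\int_\Omega f'(\vphi)|\nabla\vphi|^2+\tfrac{1}{2\eps^2}\|f(\vphi)\|^2+\nu\!\int_\Omega\!\big(\tfrac{\eps}{2}|\nabla\vphi|^2+\tfrac{1}{\eps}F(\vphi)\big)$, I would use (A5) and the interpolation/elliptic inequalities \eqref{sobolev_ineq}--\eqref{GN-H3} to show that $\mathcal{E}(\vphi)$ dominates $\|\vphi\|_W^2$ up to lower-order terms, absorbing the sign-indefinite $\nu$-part and $\int_\Omega F(\vphi)$ via (A5) and a Gronwall bound on the mean $\overline{\vphi}$ (which satisfies $\tfrac{d}{dt}\overline{\vphi}=\overline{S(\vphi)}$ by integrating the $\vphi$-equation, $h$ being bounded). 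On the right-hand side, $\int_\Omega\bg\cdot\bv$ is absorbed into $\eta\|D\bv\|^2$ by Korn and Poincaré, while $\int_\Omega S(\vphi)\mu$ splits into an oscillatory part (absorbed into $\|\nabla\mu\|^2$) and a mean part, the mean of $\mu$ being controlled from the $\mu$- and $w$-equations by $\|w\|\le C(\|\Delta\vphi\|+\|f(\vphi)\|)$. A Gronwall argument then yields the $L^\infty(0,T;W)$-bound on $\vphi$, the $L^2(0,T;V)$-bound on $\mu$, the $L^2(0,T;\MV_{\td})$-bound on $\bv$, and the $L^\infty(0,T;H)$-bound on $w=-\Delta\vphi+f(\vphi)$. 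For the remaining regularity I would run an elliptic bootstrap: $\vphi\in L^\infty(Q)$ (from $W\hookrightarrow L^\infty$) makes $f'(\vphi)$ bounded, so $\mu=-\Delta w+f'(\vphi)w+\nu w$ with $\mu\in L^2(0,T;V)$ gives $w\in L^2(0,T;W)$ and then $w\in L^2(0,T;H^3(\Omega))$; inserting this in $w=-\Delta\vphi+f(\vphi)$ gives $\vphi\in L^2(0,T;H^5(\Omega))$, and $\tphi\in L^2(0,T;V')$ follows from the $\vphi$-equation using $\bv\cdot\nabla\vphi\in L^2(0,T;L^{3/2}(\Omega))\hookrightarrow L^2(0,T;V')$.

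\textbf{Continuous dependence (part (ii)).} For two solutions I would write the system for the differences $(\bv,\vphi,\mu,w)$ with datum $\bg=\bg_1-\bg_2$. Testing the difference of the Brinkman equations with $\bv$, coercivity controls $\eta\|D\bv\|^2+\lambda_\ast\|\bv\|^2$ by $\int_\Omega(\mu_1\nabla\vphi_1-\mu_2\nabla\vphi_2)\cdot\bv+\int_\Omega\bg\cdot\bv-\int_\Omega(\lambda(\vphi_1)-\lambda(\vphi_2))\bv_2\cdot\bv$, and the only-locally-Lipschitz maps $\lambda,f,f'$ become genuinely Lipschitz on the bounded range of the states by part (i). Then I would test the difference of the $\vphi$-equation with $w$, writing $w=-\Delta\vphi+(f(\vphi_1)-f(\vphi_2))$ so that the leading part of $\langle\tphi,w\rangle$ is $\tfrac{1}{2}\tfrac{d}{dt}\|\nabla\vphi\|^2$, combine this with the differences of the $\mu$- and $w$-equations to express everything through $\|\nabla\vphi\|,\|w\|,\|\Delta\vphi\|,\|\nabla\mu\|$, absorb the Korteweg and transport differences against $\eta\|D\bv\|^2$, control the mean modes as in part (i) (again $\overline{\vphi}$ must be tracked, since $S$ breaks mass conservation), and close with Lemma~\ref{lem:Gronwall}. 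Integrating in time and repeating the elliptic bootstrap on the difference equations recovers the $L^2(0,T;H^4(\Omega))$-bound on $\vphi$ and the $L^2(0,T;W)$-bound on $w$, hence \eqref{cont-depend-est} (and, as a by-product, uniqueness).

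\textbf{Main obstacle.} The hard part will be, in part (i), extracting from the energy identity a genuinely \emph{coercive} control of $\|\vphi\|_W$ (and thus of $w$ in $L^\infty(0,T;H)$) in spite of the sign-indefinite parameter $\nu$ and the non-conservative, non-mass-preserving source $S$ --- this is precisely what the structural conditions (A5) together with the auxiliary Gronwall bound on $\overline{\vphi}$ are designed to buy, and it is what makes the subsequent bootstrap to $H^5$ possible; and, in part (ii), establishing the continuous dependence in the comparatively strong norms $C^0([0,T];V)$ for $\vphi$ and $L^2(0,T;H^4)$ rather than the usual $V'$- or $H$-level estimate, which forces one to test the difference $\vphi$-equation at a higher differential level while taming the nonlinearities through the uniform $L^\infty$ bounds of part (i) and carefully bookkeeping the chemical-potential mean.
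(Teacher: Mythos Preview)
The paper does not prove this theorem at all --- it is quoted verbatim from \cite{CGSS25}. However, the paper's proof of Proposition~\ref{exist_lin_sys} explicitly ``closely follows \cite[Theorem~2.1]{CGSS25}'', and the Fr\'echet-differentiability estimate in Theorem~\ref{thm-f-diff} mirrors the continuous-dependence argument, so the underlying method can be read off from those proofs. Your reconstruction is essentially correct, but two points of your scheme differ from what \cite{CGSS25} (as reflected in this paper) actually does.

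\textbf{Galerkin scheme.} You propose to eliminate the discrete velocity algebraically from the stationary Brinkman relation, reducing to an ODE in the $\vphi$-coefficients alone. The CGSS25 approach instead adds a vanishing parabolic regularization $\tfrac{1}{n}\partial_t\bw_n$ to the discrete velocity equation together with $\bw_n(0)=0$ (see \eqref{gs-w}--\eqref{gs-in}), so that the full approximate system is a genuine ODE in $\MV_n\times V_n$; the artificial term drops out in the limit because $n^{-1/2}\|\bw_n\|_{L^\infty(0,T;\MH)}$ stays bounded. Both routes are valid: yours is more direct, while theirs avoids inverting a stiffness matrix whose entries depend on the unknown $\vphi_n$ through $\lambda(\vphi_n)$.

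\textbf{Continuous dependence.} For part (ii) you test the $\vphi$-difference equation with the difference $w=-\Delta\vphi+(f(\vphi_1)-f(\vphi_2))$ and then bootstrap. The paper's strategy (visible in the proofs of Proposition~\ref{exist_lin_sys} and Theorem~\ref{thm-f-diff}) is different: one tests the $\vphi$-difference with $L(\vphi-\Delta\vphi)$, combines the $\mu$- and $w$-difference equations into a single identity, and tests that with $L(-\Delta\vphi+\Delta^2\vphi-\mu)$. After cancellations this produces $\|\Delta^2\vphi\|^2$ and $\|\mu\|^2$ on the left directly, and choosing $L$ large enough (to beat the constant coming from $\|\vphi\|_{L^\infty(0,T;W)}$) closes the $C^0([0,T];V)\cap L^2(0,T;H^4)$ estimate in one Gronwall step, with the $W$-bound on $w$ following afterwards by elliptic regularity. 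Your route can be made to work, but the cross term $\langle\tphi,\,f(\vphi_1)-f(\vphi_2)\rangle$ is not a perfect time derivative and would require additional manipulation.
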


\section{Differentiability of Control to State operator}
In this section we will prove Fr\'echet differentiability of control-to-state operator introduced in this section. In addition to the control space $\U$ and admissible control space $\U_{ad}$ define in \eqref{contol sp}, let us set the spaces 
$$\V=L^2(0, T; \MV_{\td})\times[C^0([0, T]; W)\cap L^2(0, T; H^5(\Omega))\cap H^1(0, T; V')],$$
$$\W=L^2(0, T; \MV_{\td})\times[C^0([0, T]; V')\cap L^2(0, T; H^4(\Omega))\cap H^1(0, T; W'),$$
and define the control-to-state operator as 
$$\ms:\U_{ad}\to\V, \text{ by }\ms(\bg)=(\bv, \vphi),$$
where $(\bv, \vphi, \mu, w)$ is the unique solution associated to the problem \eqref{CHB}-\eqref{in CHB}. It follows from \eqref{cont-depend-est} that the control-to-state operator is locally Lipschitz continuous from $\U$ to $\V.$

\subsection{Linearized System.} In order to study the Fr\'echet differentiability of $\ms$, we first study the following linearized system. Let $\bg\in\U_{ad}$ be a control with corresponding state $\ms(\bg)=(\bv,\vphi)\in\V$, where $\bv, \vphi$ is the first two component of $(\bv,\vphi,\mu, w)$. Then for every $\bu\in\U$ we consider the following system which is obtain by linearising \eqref{CHB} around $(\bv,\vphi,\mu, w)$:
\begin{equation}\label{lin-system}
    \left\{
    \begin{aligned}
      &-\eta\Delta\bw+\lm'(\vphi)\psi\bv+\lm(\vphi)\bw+\nabla q=\tht\nabla\vphi+\mu\nabla\psi+\bu \quad \text{in }Q\\
      &\td~\bw=0 \quad \text{in }Q\\
      &\pat\psi+\bw\cdot\nabla\vphi+\bv\cdot\nabla\psi-\Delta\tht=-\sigma\psi+h'(\vphi)\psi \quad \text{in }Q\\
      &\tht=-\Delta\xi+f''(\vphi)\psi w+f'(\vphi)\xi+\nu \xi \quad \text{in }Q\\
      &\xi=-\Delta\psi+f'(\vphi)\psi \quad \text{in }Q
    \end{aligned}
    \right.
\end{equation}
with boundary and initial data 
\begin{equation}\label{lin-in-bdry}
    \left\{
    \begin{aligned}
       &\bw=0,\quad \pan\psi=\pan\Delta\psi=\pan\tht=0 \quad \text{ on }\Sigma\\
       &\psi(0)=0 \quad\text{ in }\Omega.
    \end{aligned}
    \right.
\end{equation}
Let us prove the existence of weak solution of the system \eqref{lin-system}-\eqref{lin-in-bdry}. To derive the forthcoming results, we strongly rely on the estimates obtained in Theorem \ref{CHB-wellposed}. 
\begin{proposition}\label{exist_lin_sys}
 Let $T>0$, $\eta, m, \lm, \sigma, \nu, h, f$ satisfies the assumption (A1)-(A5), and $\vphi_0\in W.$ Let the control $\bg\in\U_{ad}$ with corresponding state $(\bv,\vphi,\mu, w)$ given by Theorem \ref{CHB-wellposed}. Then, for every $\bu\in \U,$ the linearized system \eqref{lin-system}-\eqref{lin-in-bdry} admits a unique solution $(\bw, \psi, \tht, \xi)$ on $[0, T]$ such that
 \begin{equation}
     \left\{
     \begin{aligned}
         &\bw\in L^2(0, T;\MV_\td)\\
         &\psi\in H^1(0, T; V')\cap L^\infty(0, T; W)\cap L^2(0, T; H^5(\Omega)),\\
        &\tht\in L^2(0, T; H),\\
        & \xi\in L^\infty(0, T; H)\cap L^2(0, T; H^3(\Omega)\cap W).
     \end{aligned}
     \right.
 \end{equation}
 Moreover, the following variational equality satisfied:
 \begin{align}
     &\eta\int_\Omega\nabla\bw :\nabla\bz dx+\int_\Omega\lm'(\vphi)\psi\bv\cdot\bz dx +\int_\Omega\lm(\vphi)\bw\cdot\bz \dx=\int_\Omega\tht\nabla\vphi\cdot\bz dx+\int_\Omega\mu\nabla\psi\cdot\bz dx+\int_\Omega\bu\cdot\bz dx,\label{lin-var-bw}\\
     &\int_\Omega\pat\psi\rho dx+\int_\Omega (\bw\cdot\nabla\vphi)\rho dx+\int_\Omega(\bv\cdot\nabla\psi)\cdot\rho dx -\int_\Omega\tht\cdot\Delta\rho dx=-\int_\Omega\sigma\psi\rho dx+\int_\Omega h'(\vphi)\psi\rho dx,\\
     &\int_\Omega\nabla\xi\cdot\nabla\rho dx +\int_\Omega f''(\vphi)\psi w\rho dx+\int_\Omega f'(\vphi)\xi\rho dx+\nu\int_\Omega\xi\rho dx=\int_\Omega\tht\rho dx,\\
     &\int_\Omega\nabla\psi\cdot\nabla\rho dx+\int_\Omega f'(\vphi)\psi\rho dx=\int_\Omega\xi\rho dx,\label{lin-var-xi}
 \end{align} for a.e.  in $(0, T)$ and for every $\bz\in\MV_\td$, $\rho\in W$ with $\psi(0)=0.$ Furthermore, the following estimate 
 \begin{align}\label{lin-energy}
     \|\bw\|_{L^2(0, T; \MV_{\td})}+\|\psi\|_{C^0([0, T]; V')\cap L^2(0, T; H^4)\cap H^1(0, T; W')}+\|\tht\|_{L^2(0, T; H)}+\|\xi\|_{L^2(0, T; W)}\leq K_3\|\bu\|_{L^2(0, T; \MH)}
 \end{align} holds true with a positive constant $K_3$ depending on $\Omega, T,$ the structure of the system and upper bound of norm of control $M.$
\end{proposition}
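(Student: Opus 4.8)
The whole argument exploits that \eqref{lin-system}--\eqref{lin-in-bdry} is \emph{linear} in the unknown $(\bw,\psi,\tht,\xi)$: its coefficients are built from the fixed reference state $(\bv,\vphi,\mu,w)$, which by Theorem \ref{CHB-wellposed} obeys the uniform bound \eqref{unif_est_weak sol}. In particular $\vphi\in L^\infty(Q)$, because $W\hookrightarrow L^\infty(\Omega)$, so $\lm'(\vphi),f'(\vphi),f''(\vphi),h'(\vphi)\in L^\infty(Q)$, whereas $\bv,\mu,w$ enter only through coefficients whose squared norms are integrable in time. I would prove the proposition by a Faedo--Galerkin method, leaning throughout on Theorem \ref{CHB-wellposed}.

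\textbf{Step 1 (Galerkin scheme).} Use the $L^2$-orthonormal Neumann eigenfunctions $\{e_i\}$ of $-\Delta$ (a basis of $H$, orthogonal in $V$ and $W$, smooth on $\overline\Omega$) for the scalar components, and a Galerkin basis $\{\boldsymbol{a}_j\}$ of $\MV_{\td}$ for $\bw$. Set $\psi_n=\sum_{i\le n}c_i^n(t)e_i$, $\bw_n=\sum_{j\le n}d_j^n(t)\boldsymbol{a}_j$, and let $\xi_n,\tht_n$ be the $H$-orthogonal projections onto $\spn\{e_1,\dots,e_n\}$ of the right-hand sides of $\eqref{lin-system}_{5}$ and $\eqref{lin-system}_{4}$; impose \eqref{lin-var-bw}--\eqref{lin-var-xi} against $\boldsymbol{a}_j,e_i$ ($j,i\le n$) together with $\psi_n(0)=0$. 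Since the coefficients are fixed, this is a linear system of ODEs with $L^1(0,T)$ coefficients, so Carath\'eodory's theorem gives a unique local solution, global once the bounds of Step 2 are available.

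\textbf{Step 2 (a priori estimates --- the crux).} Test the discrete $\bw$-equation with $\bw_n$, the discrete $\psi$-equation with $\tht_n$, and combine the two identities: exactly as in the energy balance of the nonlinear system \cite{CGSS25}, the Korteweg-type coupling contributions $\int_\Omega\tht_n\nabla\vphi\cdot\bw_n$ and $\int_\Omega(\bw_n\cdot\nabla\vphi)\tht_n$ drop out, and rewriting $\langle\pat\psi_n,\tht_n\rangle$ by means of the elliptic relations $\xi_n=-\Delta\psi_n+f'(\vphi)\psi_n$, $\tht_n=-\Delta\xi_n+\cdots$ (legitimate since $\psi_n,\xi_n,\tht_n$ are smooth in space) yields, on the left, $\frac{d}{dt}\big(\frac12\|\xi_n\|^2+\text{l.o.t.}\big)+c\|\bw_n\|_{\MV_{\td}}^2+c\|\nabla\tht_n\|^2$, the coercivity in $\bw_n$ coming from $\eta>0$ and Poincar\'e. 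Every remaining term is estimated by the generalized H\"older inequality (Proposition \ref{ppsPR-GenHoldIneq}), the embedding and elliptic-regularity inequalities \eqref{sobolev_ineq}--\eqref{GN-H3} (which also yield $\|\psi_n\|_W\le C(\|\xi_n\|+\|\psi_n\|)$ and control the mean of $\tht_n$), the $L^\infty(Q)$-bounds above, and Young's inequality \eqref{eqPR-YoungIneq}: the top-order contributions $\int\mu\nabla\psi_n\cdot\bw_n$, $\int\bu\cdot\bw_n$, $\int\lm'(\vphi)\psi_n\bv\cdot\bw_n$, $\int(\bv\cdot\nabla\psi_n)\tht_n$ and the potential-curvature terms built from $f''(\vphi)\psi_n w$ are absorbed into $c\|\bw_n\|_{\MV_{\td}}^2+c\|\nabla\tht_n\|^2$, leaving a forcing $C\|\bu\|^2$ and a remainder $C\big(1+\|\bv\|_{\MV_{\td}}^2+\|\mu\|_V^2+\|w\|^2\big)\big(\|\xi_n\|^2+\|\psi_n\|^2\big)$. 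Testing the $\psi$-equation also with $\psi_n$ (the transport term vanishing since $\td\bv=0$) and tracking the mean $\overline{\psi_n}$ via the $\Omega$-average of the equation produces a companion bound for $\frac{d}{dt}\|\psi_n\|^2$. Since $\|\bv(t)\|_{\MV_{\td}}^2,\|\mu(t)\|_V^2,\|w(t)\|^2\in L^1(0,T)$ by \eqref{unif_est_weak sol}, Gronwall's lemma (Lemma \ref{lem:Gronwall}) applied to $y_n=\|\xi_n\|^2+\|\psi_n\|^2$ with $y_n(0)=0$ gives
\[
\|\psi_n\|_{L^\infty(0,T;W)}+\|\xi_n\|_{L^\infty(0,T;H)}+\|\bw_n\|_{L^2(0,T;\MV_{\td})}+\|\tht_n\|_{L^2(0,T;V)}\le C\|\bu\|_{L^2(0,T;\MH)},
\]
with $C$ depending only on the data, $\Omega$, $T$ and $M$. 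An elliptic bootstrap on $\xi_n=-\Delta\psi_n+f'(\vphi)\psi_n$ and $\tht_n=-\Delta\xi_n+\cdots$, using the regularity of $\vphi$ and assumption (A4) ($F\in C^5$), upgrades this to uniform bounds on $\xi_n$ in $L^2(0,T;H^3\cap W)$, on $\psi_n$ in $L^2(0,T;H^5(\Omega))$ and, reading $\pat\psi_n$ off the equation, on $\psi_n$ in $H^1(0,T;W')$, all linear in $\|\bu\|_{L^2(0,T;\MH)}$.

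\textbf{Step 3 (passage to the limit, uniqueness, and the main obstacle).} By weak/weak-$\ast$ compactness a subsequence converges in all of the spaces above; linearity makes each term pass to the limit against fixed test functions (the products $\lm'(\vphi)\bv$, $f''(\vphi)w$, and so on being fixed $L^2(Q)$-functions), so the limit $(\bw,\psi,\tht,\xi)$ solves \eqref{lin-var-bw}--\eqref{lin-var-xi}. Since $\psi_n$ is bounded in $L^\infty(0,T;W)\cap H^1(0,T;V')$ and $W\hookrightarrow\hookrightarrow V'$, the Aubin--Lions--Simon lemma gives $\psi_n\to\psi$ in $C^0([0,T];V')$, whence $\psi\in C^0([0,T];V')$ and $\psi(0)=0$; estimate \eqref{lin-energy} then follows from Step 2 by weak lower semicontinuity of the norms, and the pressure $q$ is recovered from $\eqref{lin-system}_{1}$ by de Rham's theorem. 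Uniqueness is immediate: the difference of two solutions solves the system with $\bu=0$ and has enough regularity to justify the computation of Step 2, so \eqref{lin-energy} forces it to vanish. The main obstacle is Step 2: arranging the test functions so that the coupling $\mu\nabla\psi$ in the momentum equation is balanced against the new transport term $\bw\cdot\nabla\vphi$ in the $\psi$-equation, controlling the potential-curvature term $f''(\vphi)\psi w$ with only $w\in L^\infty(0,T;H)\cap L^2(0,T;H^3\cap W)$ at hand --- which is precisely where the Sobolev embeddings, the bound $\vphi\in L^\infty(Q)$ and the structural hypotheses (A4)--(A5) enter --- and carrying out the elliptic bootstrap to the full regularity $\psi\in L^\infty(0,T;W)\cap L^2(0,T;H^5(\Omega))$.
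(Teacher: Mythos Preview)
Your overall plan (Galerkin scheme, a priori estimates, compactness, uniqueness) matches the paper, but the heart of Step 2 --- your choice of test functions --- differs from the paper's and, as written, has a real gap.

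You test the discrete $\psi$-equation with $\tht_n$ and then rewrite $\langle\pat\psi_n,\tht_n\rangle$ via the elliptic relations $\xi_n=-\Delta\psi_n+f'(\vphi)\psi_n$, $\tht_n=-\Delta\xi_n+f''(\vphi)\psi_n w+(f'(\vphi)+\nu)\xi_n$. Carrying this out produces, besides $\tfrac12\tfrac{d}{dt}\|\xi_n\|^2$, the commutator terms
\[
-\big(f''(\vphi)\,\pat\vphi\,\psi_n,\ \xi_n\big)\qquad\text{and}\qquad\big(\pat\psi_n,\ f''(\vphi)\,\psi_n\,w\big),
\]
coming from the fact that the coefficients $f'(\vphi)$ and $f''(\vphi)w$ are time-dependent. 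These are \emph{not} lower-order in the sense you need. For the first, Theorem \ref{CHB-wellposed} gives only $\pat\vphi\in L^2(0,T;V')$, so you must pair it with $f''(\vphi)\psi_n\xi_n\in V$, and the gradient of that product contains $\|\psi_n\|_{L^\infty}\|\nabla\xi_n\|$ --- a quantity your Gronwall functional $y_n=\|\xi_n\|^2+\|\psi_n\|^2$ does not control. For the second, rewriting it as a time derivative brings in $\pat w$, for which no bound is stated, while substituting $\pat\psi_n$ from the equation introduces $\Delta\tht_n$, one order above your dissipative term $\|\nabla\tht_n\|^2$. The claim that all remaining terms are ``absorbed into $c\|\bw_n\|_{\MV_{\td}}^2+c\|\nabla\tht_n\|^2$'' does not go through for these two.

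The paper avoids this entirely by never testing the $\psi$-equation with $\tht_n$. Instead it tests the $\psi$-equation with $L(\psi_n-\Delta\psi_n)$ (so the time-derivative term is simply $\tfrac{L}{2}\tfrac{d}{dt}\|\psi_n\|_V^2$, no differentiation of coefficients), and tests the combined elliptic relation \eqref{combine-tht-xi} with $L(-\Delta\psi_n+\Delta^2\psi_n)$ and with $-L\tht_n$. This yields dissipative terms $L\|\Delta^2\psi_n\|^2+L\|\nabla\Delta\psi_n\|^2+L\|\tht_n\|^2$, and the Korteweg coupling $\int\tht_n\nabla\vphi\cdot\bw_n$ is not cancelled but estimated directly (using $\|\vphi\|_{L^\infty(0,T;W)}\le K_1$) and absorbed by choosing $L$ large. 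A parabolic regularization $\tfrac1n\pat\bw_n$ is added in the Galerkin scheme to obtain a genuine ODE system; your purely algebraic treatment of the $\bw_n$-equation is a legitimate alternative, but the test-function issue above is the substantive point.
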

\begin{proof}
    We will prove the theorem using Faido-Galerkin approximation technique; the idea closely follows \cite[Theorem 2.1]{CGSS25}.

   Let \((\gamma_j, e_j)_{j\ge 1}\) denote the eigenpairs of the Neumann Laplacian, and
\((\boldsymbol{\gamma}_j, \mathbf{e}_j)_{j\ge 1}\) those of the Stokes operator. Then
\(\{e_j\}_{j\ge 1}\) and \(\{\mathbf{e}_j\}_{j\ge 1}\) form complete orthonormal bases of
\(H\) and \(\MH\), respectively. Set 
    \begin{align}\label{eigen-approx}
        V_n:=\emph{span}\{e_1, \cdots, e_n\}\quad \text{ and } \MV_n:=\emph{span}\{\be_1, \cdots, \be_n\}, \quad \text{ for } n=1,2,\cdots.
    \end{align}
    We note that each space \(V_n\) is contained in \(W\), and that \(\Delta z \in V_n\) for every \(z \in V_n\). In addition, since the constant functions are included in \(V_1\), they also belong to all \(V_n\). Furthermore, the union of the spaces \(V_n\) is dense in both \(V\) and \(H\). Analogously, the union of the spaces \(\MV_n\) is dense in \(\MV_{\td}\) as well as in \(\MH\).

\textbf{Galerkin Scheme. } We look for a quadruplet $(\bw_n, \psi_n, \tht_n, \xi_n)\in H^1(0,T; \MV_n)\times H^1(0, T, V_n)\times L^2(0, T; V_n)\times L^2(0, T, V_n)$ of the form 
\begin{align*}&\bw_n(t)=\sum_{j=1}^nw_{nj}(t)\be_j, \quad \psi_n(t)=\sum_{j=1}^n\psi_{nj}(t)e_j, \\ &\tht_n(t)=\sum_{j=1}^n\tht_{nj}e_j, \quad \xi_n(t)=\sum_{j=1}^n\xi_{nj}e_j\text{  for a.e } t\in(0,T)\end{align*} where the coefficient require to satisfy $w_{nj}, \psi_{nj}\in H^1(0, T), \, \tht_{nj}, \xi_{nj}\in L^2(0,T)$ and solves the variational equation
\begin{align}
     &\frac{1}{n}\int_\Omega\pat\bw_n\bz dx+\eta\int_\Omega\nabla\bw_n :\nabla\bz dx+\int_\Omega\lm'(\vphi)\psi_n\bv\cdot\bz dx +\int_\Omega\lm(\vphi)\bw_n\cdot\bz \dx=\int_\Omega\tht_n\nabla\vphi\cdot\bz dx\no\\&\quad+\int_\Omega\mu\nabla\psi_n\cdot\bz dx+\int_\Omega\bu\cdot\bz dx \text{ for every }\bz\in\MV_n \text{ and a.e. in } (0, T),\label{gs-w}\\
     &\int_\Omega\pat\psi_n\rho dx+\int_\Omega (\bw_n\cdot\nabla\vphi)\rho dx+\int_\Omega(\bv\cdot\nabla\psi_n)\cdot\rho dx +\int_\Omega\nabla\tht_n\cdot\nabla\rho dx=-\sigma\int_\Omega\psi_n\rho dx+\int_\Omega h'(\vphi)\psi_n\rho dx\no\\ &\text{ for all }\rho\in V_n  \text{ and a.e. in } (0, T),\label{gs-psi}\\
     &\int_\Omega\nabla\xi_n\cdot\nabla\rho dx +\int_\Omega f''(\vphi)\psi_n w\rho dx+\int_\Omega f'(\vphi)\xi_n\rho dx+\nu\int_\Omega\xi_n\rho dx=\int_\Omega\tht_n\rho dx,\label{gs-tht}\\
     &\int_\Omega\nabla\psi_n\cdot\nabla\rho dx+\int_\Omega f'(\vphi)\psi_n\rho dx=\int_\Omega\xi_n\rho dx \quad\text{ for every }\rho\in V_n \text{ and a.e. in } (0, T),\label{gs-xi}\\
     &\bw_n(0)=0, \quad \psi_n(0)=0.\label{gs-in}
 \end{align}
 Taking $\bz=\be_j$ in \eqref{gs-w} and $\rho=e_j$ in \eqref{gs-psi}-\eqref{gs-xi} for $j=1,2,\cdots n$, and using the orthogonality properties of the eigenfunctions, we obtain a system of ODE of the form
 \begin{align}
     &\frac{1}{n}w_{nj}'=\mathcal{K}_1\big((w_{nj})_{j=1}^n, (\psi_{nj})_{j=1}^n, (\tht_{nj})_{j=1}^n\big)\\
     &\psi_{nj}'=\mathcal{K}_2\big((w_{nj})_{j=1}^n, (\psi_{nj})_{j=1}^n, (\tht_{nj})_{j=1}^n\big)\\
     &\tht_{nj}=\mathcal{K}_3\big((\psi_{nj})_{j=1}^n, (\xi_{nj})_{j=1}^n\big),\\
     &\xi_{nj}=\mathcal{K}_4\big((\psi_{nj})_{j=1}^n\big),
 \end{align}
for all $j=1,2, \cdots n$ and a.e. with respect to time, the functions $\mathcal{K}_1$, $\mathcal{K}_2$, $\mathcal{K}_3$, $\mathcal{K}_4$ are naturally defined and locally Lipschitz continuous on $\rea^n\times\rea^n\times\rea^n$, due to linearity of the terms and (A2). Then proceeding similarly as in Step 1 of \cite[Theorem 2.1]{CGSS25} we obtain that \eqref{gs-tht}-\eqref{gs-in} has a unique maximal solution in $[0, T_n).$\\
We now begin the estimates needed to ensure that the solution to \eqref{gs-w}–\eqref{gs-in} is global, i.e., that \(T_n = T\). To simplify the notation, we drop the subscript \(n\) and write \(T\) in place of \(T_n\) from this point onward. Moreover, for any time-dependent test function \(z\), it is implicitly understood that each equation is considered at time \(t\) and tested with \(z(t)\) for a.e.\ \(t \in (0,T)\), although we omit the explicit notation for clarity.

\textbf{A priori estimates.}
Let $\rho=1$ in \eqref{gs-psi}, then we have 
\begin{align*}
    \frac{d}{dt}\io\psi+\sigma\io\psi=\io h'(\vphi)\psi
\end{align*} since $\psi(0)=0$ and using (A3) we obtain 
\begin{align}
   \left\| \io\psi\right\|_{L^\infty(0, T)}\leq c.\label{mean-psi}
\end{align}
Now let us first combine \eqref{gs-tht} and \eqref{gs-xi} and write
\begin{align}\label{combine-tht-xi}
    \io\nabla(-\Delta\psi+f'(\vphi)\psi)\cdot\nabla\rho+\io(f'(\vphi)+\nu)(-\Delta\psi+f'(\vphi)\psi)\rho+\io f''(\vphi)\psi w\rho =\io\tht\rho.
\end{align} 
Let $\bz=\bw$ in \eqref{gs-w}, 
 we find 
\begin{align}
&\io|\bw|^2+\io(\eta|\nabla\bw|^2+\lm(\vphi)|\bw|^2)=-\io\lm'(\vphi)\psi\bv\cdot\bw+\io\tht\nabla\vphi\cdot\bw+\io\mu\nabla\psi\cdot\bw+\io\bu\cdot\bw.
\end{align} 
Then taking $\al:=\min\{\eta, \lm_\ast\}>0$ yields
\begin{align}\label{test-w}
   \io|\bw|^2+\al\|\bw\|^2_{\MV_{\td}}\leq \io|\lm'(\vphi)\psi\bv\cdot\bw|+\io|\tht\nabla\vphi\cdot\bw|+\io|\mu\nabla\psi\cdot\bw|+\io|\bu\cdot\bw|.
\end{align} Next, we take $\rho=L\psi$ in \eqref{gs-psi} with $L$ being a constant whose value will be chosen later. Then we find
\begin{align}\label{test-psi}
    \frac{L}{2}\frac{d}{dt}\|\psi\|^2+L\io\nabla\tht\cdot\nabla\psi=-L\Big[\io\Big((\bw\cdot\nabla\vphi)\psi+(\bv\cdot\nabla\psi)\psi+\sigma\psi^2-h'(\vphi)\psi^2\Big)\Big].
\end{align} Note that second term of right hand side of \eqref{test-psi} is zero due to divergence free condition of $\bv$ and boundary condition \eqref{bdry CHB}. Next taking $\rho=-L\Delta\psi$ in \eqref{gs-psi} leads to
{\small\begin{align}\label{test-del-psi}
     \frac{L}{2}\frac{d}{dt}\|\nabla\psi\|^2+L\io\nabla\tht\cdot\nabla(-\Delta\psi)=-L\Big[\io\Big((\bw\cdot\nabla\vphi)(-\Delta\psi)+(\bv\cdot\nabla\psi)(-\Delta\psi)+\sigma\psi(-\Delta\psi)-h'(\vphi)\psi(-\Delta\psi)\Big)\Big].
\end{align}}
Now taking $\rho=-L\Delta\psi+L\Delta^2\psi$ and $-L\tht$ in \eqref{combine-tht-xi} we obtain
\begin{align}\label{test-lin-psi}
    L&\io|\nabla\Delta\psi|^2+L\io|\Delta^2\psi|^2=-L\io-\Delta (f'(\vphi)\psi)(-\Delta\psi+\Delta^2\psi)-L\io|f'(\vphi)|^2\psi(-\Delta\psi+\Delta^2\psi)\no\\&-L\io|f'(\vphi)|\Delta\psi|^2-L\io f'(\vphi)(-\Delta\psi)(\Delta^2\psi)-L\nu\io(-\Delta\psi)(-\Delta\psi+\Delta^2\psi)\no\\&-L\nu\io f'(\vphi)\psi(-\Delta\psi+\Delta^2\psi)-L\io f''(\vphi)\psi w(-\Delta\psi+\Delta^2\psi)+L\io\nabla\tht\cdot\nabla\psi+L\io\nabla\tht\cdot\nabla(-\Delta\psi)\Big)\Big],
\end{align}
as well as
\begin{align}\label{test-tht}
    L\io|\tht|^2&=L\io(\Delta^2\psi)\tht-L\io\Delta(f'(\vphi)\psi)\tht+L\io f'(\vphi)(-\Delta\psi)\tht+L\io |f'(\vphi)|^2\psi\tht\no\\&\quad+L\nu\io(-\Delta\psi)\tht+\nu L\io f'(\vphi)\psi\tht+L\io f''(\vphi)\psi w\theta.
\end{align}
Adding \eqref{test-w}-\eqref{test-tht} to each other and after a obvious cancellation then left hand side becomes
\begin{align}\label{lhs-add}
   \frac{1}{2n}\frac{d}{dt}\io|\bw|^2+\al\|\bw\|^2_{\MV_{\td}}+\frac{L}{2}\frac{d}{dt}\Big(\io|\psi|^2+|\nabla\psi|^2\Big)+L\io|\nabla\Delta\psi|^2+L\io|\Delta^2\psi|^2+L\io|\tht|^2,
\end{align}
and right hand side becomes
\begin{align}\label{rhs-add}
    &\io|\lm'(\vphi)\psi\bv\cdot\bw|+\io|\tht(\nabla\vphi\cdot\bw)| +\io|\mu(\nabla\psi\cdot\bw)|+\io|\bu\cdot\bw|-L\io(\bw\cdot\nabla\vphi)\psi-L\io(\bv\cdot\nabla\psi)\psi\no\\&-L\sigma\io\psi^2+L\io h'(\vphi)\psi^2-L\io(\bw\cdot\nabla\vphi)(-\Delta\psi)-L\io(\bv\cdot\nabla\psi)(-\Delta\psi)-L\sigma\io\psi(-\Delta\psi)\no\\&+L\io h'(\vphi)\psi(-\Delta\psi)-L\io-\Delta (f'(\vphi)\psi)(-\Delta\psi+\Delta^2\psi)-L\io|f'(\vphi)|^2\psi(-\Delta\psi+\Delta^2\psi)\no\\&-L\io f'(\vphi)|\Delta\psi|^2-L\io f'(\vphi)(-\Delta\psi)(\Delta^2\psi)-L\nu\io(-\Delta\psi)(-\Delta\psi+\Delta^2\psi)\no\\&-L\nu\io f'(\vphi)\psi(-\Delta\psi+\Delta^2\psi)-L\io f''(\vphi)\psi w(-\Delta\psi+\Delta^2\psi)+L\io(\Delta^2\psi)\tht-L\io\Delta(f'(\vphi)\psi)\tht\no\\&+L\io |f'(\vphi)|^2\psi\tht+L\io f'(\vphi)(-\Delta\psi)\tht+L\nu\io(-\Delta\psi)\tht+\nu L\io f'(\vphi)\psi\tht+L\io f''(\vphi)\psi w\tht.
\end{align}
We have to estimate the terms in \eqref{rhs-add}. In doing this we repeatedly make use of the inequalities \eqref{sobolev_ineq}-\eqref{GN-H3}, as well as of the H\"older and Young inequalities. For the first term we have 
\begin{align}
    \left| \int_{\Omega} \lambda'(\vphi) \psi \bv\cdot\bw \right|
\leq \|\lambda'(\vphi)\|_{\infty}  \|\psi\|_{L^4}\|\bv\| \|\bw\|_{L^4}
\leq \frac{\alpha}{8} \|\nabla \bw\|^2 + c_\al \|\psi\|_{V}^2 \|\bv\|^2.
\end{align}
Next we estimate the second term with explicit choice of parameter and get
\begin{align}
    \left| \int_{\Omega} \tht(\nabla\vphi\cdot\bw) \right|
&\leq \|\bw\|_{L^4} \|\nabla\vphi\|_{L^4} \|\tht\|
\leq \frac{\alpha}{8} \|\nabla\bw\|^2 + \frac{2 C_S^2}{\al} \|\varphi\|_{H^2}^2 \|\tht\|^2\no\\&\leq \frac{\alpha}{8} \|\nabla\bw\|^2 + \frac{2 C_S^2K_1^2}{\al}\|\tht\|^2,
\end{align}
where $C_S$ is the Sobolev embedding constant appear in \eqref{sobolev_ineq} and $K_1$ is the upper bound of $\|\vphi\|_{L^\infty(0, T; W)}$ which is bounded due to \eqref{unif_est_weak sol}. Next we have
\begin{align}
    &\left|\io\mu(\nabla\psi\cdot\bw)\right|\leq \|\bw\|_{L^4}\|\mu\|_{L^4}\|\nabla\psi\|\leq \frac{\alpha}{8} \|\nabla\bw\|^2+c_\al\|\mu\|_V^2\|\nabla\psi\|^2,\no\\
    &\left|\io\bu\cdot\bw\right|\leq\frac{\alpha}{8} \|\nabla\bw\|^2+c_\al\|\bu\|^2\no.
\end{align}
Now we estimate the RHS of \eqref{test-psi} that appear in \eqref{rhs-add}. We deduce that
\begin{align*}
   & -L\io(\bw\cdot\nabla\vphi)\psi\leq L\|\bw\|_{L^4}\|\nabla\varphi\|_{L^4}\|\psi\|\leq\frac{\al}{8}\|\nabla\bw\|^2+c_{\al,L}\|\vphi\|^2_{H^2}\|\psi\|^2,\\
   &-L\sigma\io\psi^2 \leq c_{L,\sigma}\|\psi\|^2,\\
   &-L\io h'(\vphi)\psi^2\leq c_L\|\psi\|^2,
\end{align*}
since $\vphi\in L^\infty(Q)$ and $h$ satisfy (A3), $h'(\vphi)$ is bounded in $\rea,$ which we have used in the last inequality. Now we address RHS of \eqref{test-del-psi} that appear in \eqref{rhs-add} taking into account integration by parts, Poincar\'e, H\"older, Young inequality and the estimate \eqref{sobolev_ineq}, \eqref{GN-H3}. We get that
\begin{align*}
   & -L\io(\bw\cdot\nabla\vphi)(-\Delta\psi)\leq L\|\bw\|_{L^4}\|\nabla\vphi\|_{L^4}\|\Delta\psi\|\leq\frac{\al}{8}\|\nabla\bw\|^2+\frac{2C_S^2}{\al}\|\vphi\|^2_{H^2}\|\Delta\psi\|^2\\&\qquad\qquad\leq\frac{\al}{8}\|\nabla\bw\|^2+\delta\|\Delta^2\psi\|^2+\frac{C^4_S\|\vphi\|^4_{H^2}}{2\al\delta}\|\psi\|^2_V\\
   &\qquad\qquad=\frac{\al}{8}\|\nabla\bw\|^2+\delta\|\Delta^2\psi\|^2+c_{\al,\delta,L}\|\vphi\|^4_{H^2}\|\psi\|^2_V,\\
&-L\io(\bv\cdot\nabla\psi)(-\Delta\psi)\leq L\|\bv\|_{L^4}\|\nabla\psi\|\|\Delta\psi\|_{L^4}\leq\delta\|\psi\|^2_{H^3}+c_{\delta, L}\|\nabla\bv\|^2\|\nabla\psi\|^2\\&\qquad\qquad\leq\delta\|\Delta^2\psi\|^2+c_\delta\|\psi\|^2_V+c_{\delta,L}\|\nabla\bv\|^2\|\nabla\psi\|^2,\\
&-L\sigma\io\psi(-\Delta\psi)\leq c_{L,\sigma}\|\nabla\psi\|^2,\\
&-L\io h'(\vphi)\psi(-\Delta\psi)\leq cL\|\psi\|\|\Delta\psi\|\leq\delta\|\Delta^2\psi\|^2+c_{\delta,L}\|\psi\|^2_V.
\end{align*}
We now estimate the terms on the right-hand side of \eqref{test-lin-psi} and \eqref{test-tht}. Although the first term in \eqref{test-tht} is straightforward to handle, we present its estimate explicitly, since the constant $L$ will later be chosen based on this expression. So we get
\begin{align*}
    L\io(\Delta^2\psi)\tht\leq\frac{L}{2}\|\Delta^2\psi\|^2+\frac{L}{2}\|\tht\|^2.
\end{align*}
Now we consider the first term of \eqref{test-lin-psi} and second term of \eqref{test-tht} involving a generic function  $z\in L^2(0, T, H)$ as they can be estimated similarly. So, we have
\begin{align*}
    &-L\io-\Delta(f'(\vphi)\psi)z= L\io\big(f'''(\vphi)|\nabla\vphi|^2\psi+f''(\vphi)\Delta\vphi\psi+2f''(\vphi)\nabla\vphi\cdot\nabla\psi+f'(\vphi)\Delta\psi\big)z\\&\leq L(\|f'''(\vphi)\|_\infty\|\nabla\vphi\|^2_{L^6}\|\psi\|^2_V+\|f''(\vphi)\|_\infty\|\Delta\vphi\|_{L^4}\|\psi\|_{L^4}+\|f''(\vphi)\|_\infty\|\nabla\vphi\|_{L^4}\|\nabla\psi\|_{L^4}\no\\&\qquad+\|f'(\vphi)\|_\infty\|\Delta\psi\|)\|z\|\\
    &\leq \delta\|z\|^2+C_Sc_{\delta,L}\|\vphi\|^2_{H^3}\|\psi\|^2_V+C_Sc_{\delta,L}\|\vphi\|^2_{H^2}\|\psi\|^2_{H^2}+c_{\delta,L}\|\Delta\psi\|^2\\
    &\leq \delta\|z\|^2+\delta\|\Delta^2\psi\|^2+c_{\delta,L}(\|\vphi\|^2_{H^3}+\|\vphi\|^4_{H^2})\|\psi\|^2_V.
\end{align*}
Now as explained in \cite{CGSS25}, by choosing $z=-\Delta\psi+\Delta^2\psi-\tht$ we derive the above estimate as 
\begin{align}
    &-L\io-\Delta(f'(\vphi)\psi)(-\Delta\psi+\Delta^2\psi-\tht)\no\\&\qquad\leq \delta\|^2-\Delta\psi+\Delta^2\psi-\tht\|+\delta\|\Delta^2\psi\|^2+c_{\delta,F,L}(\|\vphi\|^2_{H^3}+\|\vphi\|^4_{H^2})\|\psi\|^2_V,
\end{align} where again the first term can be estimated as
\begin{align}\label{3term expan}
    \delta\|-\Delta\psi+\Delta^2\psi-\tht\|\leq 4\delta\|\Delta^2\psi\|^2+3\delta\|\tht\|^2+c_\delta\|\psi\|^2_V.\end{align} The rest of the elements on the RHS of \eqref{test-lin-psi} and \eqref{test-tht} can be estimated similarly using (A5), \eqref{sobolev_ineq}, \eqref{GN-H3} and previously mentioned inequalities. We estimate them briefly as follows
    \begin{align}
        & -L\io|f'(\vphi)|^2\psi z\leq \delta\|z\|^2+c_{\delta, L}\|\psi\|^2,\\
        & -L\io f'(\vphi)(-\Delta\psi)z\leq \delta\|z\|^2+c_{\delta, F, L}\|\Delta\psi\|^2\no\\&\qquad\qquad\qquad\qquad\quad \leq \delta\|z\|^2+\delta\|\Delta^2\psi\|^2+\frac{C_{\delta, L}}{4}\|\psi\|^2_V,\\
        &-L\nu\io(-\Delta\psi)z \leq \delta\|z\|^2+\delta\|\Delta^2\psi\|^2+c_{\delta, \nu, L}\|\psi\|^2_V,\\
        &-L\nu\io f'(\vphi)\psi z\leq \delta\|z\|^2+c_{\delta, L}\|\psi\|^2,\\
        &-L\io f''(\vphi)\psi wz \leq \delta\|z\|^2+c_{\delta, L}\|w\|^2_V\|\psi\|^2_V.
    \end{align}
  We have estimated all the terms in \eqref{rhs-add}, putting the estimate \eqref{lhs-add} and \eqref{rhs-add} together yields
  \begin{align}
      &\frac{1}{2n}\frac{d}{dt}\io|\bw|^2+\frac{\al}{8}\|\bw\|^2_{\MV_{\td}}+\frac{L}{2}\frac{d}{dt}\io(|\psi|^2+|\nabla\psi|^2)+L\io|\nabla\Delta\psi|^2+\left(\frac{L}{2}-30\delta\right)\io|\Delta^2\psi|^2\no\\&+\left(\frac{L}{2}-\frac{2C_S^2K_1^2}{\al}-18\delta\right)\io|\tht|^2\leq c\|\bu\|^2+\Psi\|\psi\|^2_V,
  \end{align}
  where we denote
  \begin{align*}
      \Psi(t)=c(\|\bv(t)\|^2+\|\mu(t)\|^2_V+\|\vphi(t)\|^2_{H^2} +\|\vphi(t)\|^2_{H^3}+\|\vphi(t)\|^4_{H^2}+\|\bv(t)\|^2_{\MV_{\td}}+\|w(t)\|^2_V+1).
  \end{align*}
  Since $(\bv, \vphi, \mu, w)$ satisfy \eqref{unif_est_weak sol}, $\Psi$ belongs to $L^1(0, T)$. Now we are in position to apply Gronwall inequality; before that let us first choose $\delta=\frac{L}{120}$ and $L>\frac{40C_S^2K_1^2}{7\al}$, we conclude in particular that 
  \begin{align}
      &\frac{1}{n^{1/2}}\|\bw\|_{L^\infty(0, T; \MH)}+\frac{\al^{1/2}}{2}\|\bw\|_{L^2(0, T; \MV_{\td})}+L^{1/2}\|\psi\|_{L^\infty(0, T; V)}+(2L)^{1/2}\|\nabla\Delta\psi\|_{L^2(0, T; H)}\no\\&+L^{1/2}\|\Delta^2\psi\|_{L^2(0, T; H)}+\tilde{\al}\|\tht\|_{L^2(0, T; H)}<C, \quad\text{ with $\tilde{\al}>0$ is some constant}.
  \end{align} Now using elliptic regularity estimate we obtain 
   \begin{align}\label{uni-est-lin-var}
      &\frac{1}{n^{1/2}}\|\bw\|_{L^\infty(0, T; \MH)}+\frac{\al^{1/2}}{2}\|\bw\|_{L^2(0, T; \MV_{\td})}+L^{1/2}\|\psi\|_{L^\infty(0, T; V)}+L^{1/2}\|\psi\|_{L^2(0, T; H^4)}\no\\&+\tilde{\al}\|\tht\|_{L^2(0, T; H)}<C,\end{align}
      where the constant $C$ may depends on $ K_1$ and upper bound of the norm $\|\bu\|_{L^2(0, T;\MH)}$ but independent of $n.$
      
\textbf{Time-derivative estimate. } For $\rho\in L^2(0, T; W)$, we first estimate the following using \eqref{gs-psi} and for a.e. $t\in (0, T)$
\begin{align*}
    \langle\pat\psi, \rho\rangle_{W', W}\leq \|\bw\|_{L^6}\|\nabla\vphi\|\|\rho\|_{L^3}+\|\bv\|_{L^6}\|\nabla\psi\|\|\rho\|_{L^3} + \|\tht\|\|\Delta\rho\|+|\sigma|\|\psi\|\|\rho\|+c\|\psi\|\|\rho\|.
\end{align*} 
Then integrating the above estimate between $0$ to $T$ we obtain
\begin{align*}
    \int_0^T\langle\pat\psi, \rho\rangle_{W', W}\leq & c\Big(\|\bw\|_{L^2(0, T; \MV_{\td})}\|\vphi\|_{L^\infty(0, T; V)}+\|\bv\|_{L^2(0, T; \MV_{\td})}\|\psi\|_{L^\infty(0, T; V)}+\|\tht\|_{L^2(0, T; H)}\no\\&+\|\psi\|_{L^2(0, T; H)}\Big)\|\rho\|_{L^2(0, T; W)}.
\end{align*}
Since $\rho\in L^2(0, T; W)$ is arbitrary, by virtue of above estimate it is standard argument to pass to supremum with respect to $\rho$ and using \eqref{uni-est-lin-var} to get
\begin{align}\label{time unif lin var}
    \|\pat\psi\|_{L^2(0, T; W')}\leq C,
\end{align}
where $C$ is independent of $n.$

\textbf{Estimate on $\xi$.} To get an uniform bound on $\xi,$ we consider the equality \eqref{combine-tht-xi} 
\begin{align}\label{var-xi}
    \io\nabla(-\Delta\psi+f'(\vphi)\psi)\cdot\nabla\rho+\io(f'(\vphi)+\nu)(-\Delta\psi+f'(\vphi)\psi)\rho+\io f''(\vphi)\psi w\rho =\io\tht\rho,
\end{align} for $\rho\in W$ and a.e. $t\in (0, T)$. Considering $\xi=-\Delta\psi+f'(\vphi)\psi$, we infer from \eqref{uni-est-lin-var}
$$\|\xi\|_{L^2(0, T; H)}\leq C.$$
Also we note that $\xi$ solves the variational inequality \eqref{var-xi}, which can be re-written as
\begin{align}\label{elliptic-var-xi}
    \io\xi(-\Delta\rho+\rho) =\io\tilde{h}\rho, \quad\text{ for } \rho\in W, \text{ a.e. }t\in(0, T),
\end{align}
where $\tilde{h}=\tht-f''(\vphi)w\psi+(1-f'(\vphi)-\nu)\xi$ which is belongs to $L^2(0, T; H)$, thanks to \eqref{unif_est_weak sol} and \eqref{uni-est-lin-var}. Then from elliptic regularity results in \eqref{elliptic-var-xi} we infer that $\xi\in L^2(0, T; W)$ solves $-\Delta\xi+\xi=\tilde{h},$ a.e. in $Q$ and 
\begin{align}\label{unif-lin-xi}
    \|\xi\|_{L^2(0, T; W)}\leq c\|\tilde{h}\|_{L^2(0, T; H)}\leq C,\end{align} again $C$ is independent of $n.$  We note from from the estimates presented in \eqref{uni-est-lin-var}, \eqref{time unif lin var} and \eqref{unif-lin-xi}, that solution $(\bw_n, \psi, \tht_n, \xi_n)$ of the approximated problem \eqref{gs-w}-\eqref{gs-tht} is global i,e, $T_n=T.$

\textbf{Extraction of subsequences and limit passing. } From the above mentioned uniform estimates along with Banach-Alaoglu theorem, we can extract subsequences $(\bw_{n_k}, \psi_{n_k}, \tht_{n_k}, \xi_{n_k})_{k\in\mathbb{N}}$ (we will continue to denote as $(\bw_n, \psi, \tht_n, \xi_n)$) such that 
\begin{equation}
    \left\{
    \begin{aligned}
        &\bw_n\to\bw \quad\text{ weakly in }L^2(0, T; \MV_{\td}),\\
        &\psi_n \to \psi\quad\text{ weakly star in }L^\infty(0, T; V),\\
        &\psi_n\to \psi \quad\text{ weakly in } L^2(0, T; H^4),\\
        &\pat\psi_n\to \pat\psi\quad\text{ weakly in } L^2(0, T; W'),\\
        &\tht_n \to \tht \quad\text{ weakly in } L^2(0, T; H),\\
        &\xi_n \to \xi\quad\text{ weakly in } L^2(0, T; W).
    \end{aligned}
    \right.
\end{equation}
With this convergence results in hand, we are now left to show that the  quadruplet $(\bw, \psi, \tht, \xi)$ solves \eqref{lin-var-bw}- \eqref{lin-var-xi} and satisfy the estimate \eqref{lin-energy}. The last part is a easy consequence of estimates \eqref{uni-est-lin-var}, \eqref{time unif lin var}, \eqref{unif-lin-xi} and lower semicontinuity of norms. Now to show that $(\bw, \psi, \tht, \xi)$ is a solution of \eqref{lin-var-bw}-\eqref{lin-var-xi}, follows directly from conclusion part of \cite[Theorem 2.1]{CGSS25} (present case is more easier as the system is linear and some system parameter like $m$, $\eta$ are constants).

\textbf{Uniqueness. }Let us denote $\delta\bw=\delta\bw_1-\delta\bw_2,$ $\delta q=q_1-q_2$, $\delta\psi=\psi_1-\psi_2,$ $\delta\tht=\tht_1-\tht_2,$, $\delta\xi=\xi_1-\xi_2$, and $\delta\bu-\bu_1-\bu_2$ where $(\bw_i, q_i, \psi_i, \tht_i, \xi_i)$ for $i=1, 2$, are two solution of linearized system \eqref{lin-system}-\eqref{lin-in-bdry} corresponding to the control $\bu_i$. As the system is linear, $(\delta\bw, \delta\psi, \delta\tht, \delta\xi)$ again satisfy the same system \eqref{lin-system}-\eqref{lin-in-bdry}. Then by virtue of \eqref{lin-energy}, $(\delta\bw, \delta\psi, \delta\tht, \delta\xi)$ satisfy the estimate 
 \begin{align*}
     &\|\delta\bw\|_{L^2(0, T; \MV_{\td})}+\|\delta\psi\|_{C^0([0, T]; V')\cap L^2(0, T; H^4)\cap H^1(0, T; W')}+\|\delta\tht\|_{L^2(0, T; H)}+\|\delta\xi\|_{L^2(0, T; W)}\\&\qquad\leq K_3\|\delta\bu\|_{L^2(0, T; \MH)},
 \end{align*} where $K_3$ is the same constant appear in \eqref{lin-energy}. Therefore, if $\bu_1=\bu_2$ then solution of the system \eqref{lin-system}-\eqref{lin-in-bdry} is unique. 
\end{proof} 
Proposition \ref{exist_lin_sys} paves the way to proving Fr\'echet differentiability of control-to-state operator $\ms.$
\subsection{Differentiability of $\ms$}
Consider the control to state operator $ \mathcal{S}$ defined in section 3. Since $\V \subseteq \W$,  we can consider $\ms \text{ from } \mathcal{U} \text{ to the weaker space } \mathcal{W}$. So we consider the control-to-state map $\ms:\U\to\W.$
\begin{definition}\label{def:fd_S}
    We say $\ms: \mathcal{U} \rightarrow \W$ is Fr\'echet differentiable in $\mathcal{U}$ if for any $\bg \in \mathcal{U}$, there exist a linear operator $\mathcal{S}'(\bg): \mathcal{U}\rightarrow \mathcal{W}$ such that
\begin{align}\label{diff criteria}
    \lim_{\|\boldsymbol{\bu}\|_{\mathcal{U}}\rightarrow 0}\frac{\|\mathcal{S}(\bg+ \boldsymbol{u})-\mathcal{S}(\bg)-\ms'(\bg)(\bu)\|_{\W}}{\|\bu\|_{\mathcal{U}}} = 0,
\end{align}
for any arbitrary small perturbation $\bu \in \mathcal{U}_{ad}$.
\end{definition}
\begin{theorem}\label{thm-f-diff}
  Let the assumption (A1)-(A5) satisfied. The control to state operator, $\ms$ is Fr\'echet differentiable in $\mathcal{U}_{ad}$ as a mapping from $\U$ to $\V$, and the Fr\'echet derivative $\ms'(\bg)\in\mathcal{L}(\U, \V)$ is the linear operator that map any $\bu\in \U$ into the component $(\bw, \psi)$ of the solution of linearized system \eqref{lin-system} associated to $\bg$ and variation $\bu$, that is 
  $$\ms'(\bg)(\bu) = (\bw, \psi),$$ 
  where $(\bw, \psi)$ is the first two components of the quadruplet $(\bw, \psi, \tht, \xi)$, the unique solution of the linearized system \eqref{lin-system}.
\end{theorem}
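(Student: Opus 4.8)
The plan is to adapt the classical first-order linearization argument to the present sixth-order setting, reusing almost verbatim the energy estimates from the proof of Proposition~\ref{exist_lin_sys}.

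\textbf{Set-up and reduction.} By Theorem~\ref{CHB-wellposed}(i) the operator $\ms$ is well defined on all of $\U$. Fix $\bg\in\U_{ad}$ with associated state $(\bv,\vphi,\mu,w)$, and for $\bu\in\U$ with $\|\bu\|_\U\le 1$ let $(\bv^\bu,\vphi^\bu,\mu^\bu,w^\bu)$ be the state associated with $\bg+\bu$; set $\delta\bv:=\bv^\bu-\bv$, $\delta\vphi:=\vphi^\bu-\vphi$, $\delta\mu:=\mu^\bu-\mu$, $\delta w:=w^\bu-w$, so that, by the continuous dependence estimate \eqref{cont-depend-est},
\[
\|\delta\bv\|_{L^2(0,T;\MV_{\td})}+\|\delta\vphi\|_{C^0([0,T];V)\cap L^2(0,T;H^4(\Omega))}+\|\delta\mu\|_{L^2(0,T;H)}+\|\delta w\|_{L^2(0,T;W)}\le C\|\bu\|_\U .
\]
Let $(\bw,\psi,\tht,\xi)$ be the solution of the linearized system \eqref{lin-system}--\eqref{lin-in-bdry} furnished by Proposition~\ref{exist_lin_sys}, and introduce the remainders $\bz:=\delta\bv-\bw$, $\rho:=\delta\vphi-\psi$, $\zeta:=\delta\mu-\tht$, $\chi:=\delta w-\xi$. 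The map $\bu\mapsto(\bw,\psi)$ is linear (by uniqueness in Proposition~\ref{exist_lin_sys}) and bounded from $\U$ into $\V$ (by the regularity and estimate stated there), hence is a candidate for $\ms'(\bg)$; by Definition~\ref{def:fd_S} it therefore remains to prove $\|(\bz,\rho)\|_\W=o(\|\bu\|_\U)$, which I will obtain in the stronger form $\|(\bz,\rho)\|_\W\le C\|\bu\|_\U^2$.

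\textbf{The remainder system.} Subtracting the weak formulation \eqref{v weak form}--\eqref{w weak form} written for $\bg+\bu$, the same formulation written for $\bg$, and the linearized variational equalities \eqref{lin-var-bw}--\eqref{lin-var-xi} from one another, one checks that $(\bz,\rho,\zeta,\chi)$ solves a system with \emph{exactly the same linear structure} as \eqref{lin-system}--\eqref{lin-in-bdry} — the base state $(\bv,\vphi,\mu,w)$ as coefficients, vanishing initial datum $\rho(0)=0$, the same homogeneous boundary conditions — up to an additional right-hand side $R_\bu$ gathered from the second-order Taylor remainders of the nonlinearities. For instance
\[
\lm(\vphi^\bu)\bv^\bu-\lm(\vphi)\bv-\lm'(\vphi)\psi\bv-\lm(\vphi)\bw=\lm(\vphi)\bz+\lm'(\vphi)\bv\rho+\big(\lm'(\vphi)\delta\vphi\,\delta\bv+R_\lm\,\bv^\bu\big),\qquad |R_\lm|\le C|\delta\vphi|^2,
\]
using that $\lm'$ is locally Lipschitz and $\vphi,\vphi^\bu\in L^\infty(Q)$; likewise
\[
\mu^\bu\nabla\vphi^\bu-\mu\nabla\vphi-\tht\nabla\vphi-\mu\nabla\psi=\mu\nabla\rho+\zeta\nabla\vphi+\delta\mu\,\nabla\delta\vphi,
\]
and analogous first-order expansions of $S(\vphi)$, $f(\vphi)$, $f'(\vphi)w$ and $f''(\vphi)\psi w$. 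Since $F\in C^{5}$ and $\vphi,\vphi^\bu$ are bounded, every term of $R_\bu$ is either a product of two $\delta$-quantities (or of their first and second spatial derivatives) or a $\delta$-quantity times a Taylor remainder of size $O(|\delta\vphi|^2)$ or $O(|\delta\vphi|\,|\nabla\delta\vphi|)$.

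\textbf{Energy estimate.} One then runs on $(\bz,\rho,\zeta,\chi)$ the energy scheme of the proof of Proposition~\ref{exist_lin_sys}: test the $\bz$-equation with $\bz$, the $\rho$-equation with $L\rho$ and with $-L\Delta\rho$, and the combined $\zeta$--$\chi$ equation with $-L\Delta\rho+L\Delta^2\rho$ and with $-L\zeta$, then add. The contributions that are linear in $(\bz,\rho,\zeta,\chi)$ are absorbed word for word as in Proposition~\ref{exist_lin_sys}, producing a $\Psi(t)\|\rho\|_V^2$ term with $\Psi\in L^1(0,T)$ by \eqref{unif_est_weak sol}; choosing $\delta$ and $L$ as there and invoking Gronwall's Lemma~\ref{lem:Gronwall} yields
\[
\|\bz\|_{L^2(0,T;\MV_{\td})}+\|\rho\|_{L^\infty(0,T;V)\cap L^2(0,T;H^4(\Omega))}+\|\zeta\|_{L^2(0,T;H)}+\|\chi\|_{L^2(0,T;W)}+\|\pat\rho\|_{L^2(0,T;W')}\le C\,\|R_\bu\|_\star ,
\]
the last term by comparison in the $\rho$-equation exactly as for \eqref{time unif lin var}, where $\|R_\bu\|_\star$ denotes the $L^2$- or $L^1$-in-time norm of the source that naturally arises after testing; in particular the left-hand side controls $\|(\bz,\rho)\|_\W$.

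\textbf{Main obstacle.} The crux — and the only place where the sixth-order structure is genuinely used — is the bound $\|R_\bu\|_\star\le C\|\bu\|_\U^2$. The benign terms follow from \eqref{sobolev_ineq}--\eqref{GN-H3}, \eqref{unif_est_weak sol} and \eqref{cont-depend-est}; for example $\int_0^T\!\!\int_\Omega|f''(\vphi)|^2|\delta\vphi|^2|\delta w|^2\le\|f''(\vphi)\|_\infty^2\|\delta\vphi\|_{C^0([0,T];V)}^2\|\delta w\|_{L^2(0,T;V)}^2\le C\|\bu\|_\U^4$, which is absorbed via Young's inequality into the left-hand side of the energy identity. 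The delicate terms are those pairing $\delta\mu$ — controlled by \eqref{cont-depend-est} only in $L^2(0,T;H)$ — with spatial derivatives of $\delta\vphi$, typically $\delta\mu\,\nabla\delta\vphi$ tested with $\bz$, or the higher-order analogues from the $-\Delta\!\left(f'(\vphi)\cdot\right)$ contributions tested with $\Delta^2\rho$: for these I would integrate by parts to transfer all spatial derivatives onto the remainder — whose control up to order $\Delta^2\rho$ is provided by the left-hand side of the energy identity — so that the $\delta$-quantities enter only through low-order norms, for which the uniform-in-time bound $\delta\vphi\in C^0([0,T];V)$ together with the $L^2(0,T;H^4(\Omega))$-regularity of $\delta\vphi$ (the genuine payoff of the sixth-order equation) is exactly what is needed; should an improved control such as $\delta\vphi\in L^\infty(0,T;V)$ with norm $\lesssim\|\bu\|_\U$ be required, it is obtained by rerunning the Galerkin-testing argument of Proposition~\ref{exist_lin_sys} on the difference equations. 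Collecting all estimates yields $\|(\bz,\rho)\|_\W\le C\|\bu\|_\U^2$, which is \eqref{diff criteria}, and shows that $\ms$ is Fr\'echet differentiable with $\ms'(\bg)(\bu)=(\bw,\psi)$.
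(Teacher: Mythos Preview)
Your proposal is correct and follows essentially the same route as the paper: define the remainder quadruplet $(\bz,\rho,\zeta,\chi)$, observe that it solves a system with the same linear part as \eqref{lin-system} plus a source $R_{\bu}$ built from second-order Taylor remainders and products of two $\delta$-quantities, then rerun the energy scheme of Proposition~\ref{exist_lin_sys} (testing with $\bz$, $L(\rho-\Delta\rho)$, $L(-\Delta\rho+\Delta^2\rho-\zeta)$) and apply Gronwall to obtain $\|(\bz,\rho)\|_\W\le C\|\bu\|_\U^2$. The only place where the paper is more explicit than your sketch is in handling the highest-order remainder: since the testing involves $\Delta^2\rho$, the term $-\Delta(f'(\vphi)\rho+R_f)$ appears, and the paper deals with $\Delta R_f$ not by integrating by parts onto $\rho$ but by computing $\Delta R_f$ directly and estimating $\|\Delta R_f\|_{L^2(0,T;H)}\le C\|\bu\|_\U^2$ using the $L^2(0,T;H^4)$ regularity of $\delta\vphi$ --- which is precisely the point you flag as ``the genuine payoff of the sixth-order equation''.
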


\begin{proof}
 To prove the theorem, it is enough to show that \eqref{diff criteria} holds for every $\bu,\, \bg$ mentioned above. Without loss of generality, we can assume $\|\bu\|_{\U} < M - \|\bg\|_{\U}$, so that $\bg + \bu \in \U_{ad}.$  
 Also, we take $\ms(\bg+\bu) = (\bv^u, \vphi^u, \mu^u, w^u)$ and $\ms(\bg) = (\bv, \vphi, \mu, w)$ to be the solutions of the system \eqref{CHB} corresponding to the controls $\bg+\bu$ and $\bg$, respectively. For convenience, we set 
 $$\ba{\bv} = \bv^u - \bv,\quad \ba{p} = p^u - p,\quad \ba{\vphi} = \vphi^u - \vphi,\quad \ba{\mu} = \mu^u - \mu,\quad \ba{w} = w^u - w,$$
 then we have 
 \begin{equation}\label{1st diff CHB}
     \left\{
     \begin{aligned}
      & -\eta\Delta\ba{\bv}+\lm(\vphi)\ba{\bv}+\nabla\ba{p}=\ba{\mu}\nabla\vphi^u+\mu\nabla\ba{\vphi}-(\lm(\vphi^u)-\lm(\vphi))\bv^u+\bu \quad\text{ in }Q,\\
      &\pat\vphi+\ba{\bv}\cdot\nabla\vphi^u+\bv\cdot\nabla\ba{\vphi}-\Delta\ba{\mu}=-\sigma\ba{\vphi}+h(\vphi^u)-h(\vphi)\quad\text{ in }Q,\\
      &\ba{\mu}=\Delta\ba{w}+f'(\vphi)\ba{w}+(f'(\vphi^u)-f'(\vphi))w^u+\nu\ba{w}\quad\text{ in }Q,\\
      &\ba{w}=-\Delta\ba{\vphi}+f(\vphi^u)-f(\vphi),
     \end{aligned}
     \right.
 \end{equation}
 complemented with boundary and initial conditions 
\begin{align}
    &\ba{\bv}=0, \text{ and } \pan{\ba\mu}=\pan \ba{w}=\pan\ba{\vphi}= 0  \quad \text{ on } \Sigma,\label{bdry 1diffCHB}\\
    &\ba{\vphi}(0)=\vphi_0,  \quad \text{in } \Omega.\label{in 1diffCHB}
    \end{align}
  We note that the estimate \eqref{cont-depend-est} given by Theorem \ref{CHB-wellposed} holds for \eqref{1st diff CHB}, i.e.
  \begin{align}\label{cont-depend-est 3.34}
    \|\ba{\bv}\|_{L^2(0,T;\MV_{\td})} &
+ \|\ba{\vphi}\|_{C^0([0,T];V) \cap L^2(0,T;H^4(\Omega))} 
+ \|\ba{\mu}\|_{L^2(0,T;H)} 
+ \|\ba{w}\|_{L^2(0,T;W)} 
\no\\&\quad\le K_2 \|\bu\|_{L^2(0,T;\MH)},
\end{align}
The constant \(K_2\) depends only on the structure of the system, the domain \(\Omega\), the final time \(T\), and an upper bound of the norms $\|(\bv^u, \vphi^u, \mu^u, w^u)\|_{\V}$, $\|(\bv, \vphi, \mu, w)\|_{\V}$, $\|\bg\|_{\U}.$
Now, we set $$\bz=\ba{\bv}-\bw, \, r=\ba{p}-p, \, \zeta=\ba{\vphi}-\psi, \, \tau=\ba{\mu}-\tht, \, \gm=\ba{w}-\xi.$$ 
We are going to prove an inequality that imply \eqref{diff criteria}; namely we want to show the following estimate is satisfied: 
\begin{align}\label{Diff_est}
    \|\bz\|_{L^2(0, T, \MV_{\td})}+\|\zeta\|_{C^0([0,T];V') \cap L^2(0,T;H^4(\Omega))}+\|\tau\|_{L^2(0,T;H)}+\|\gm\|_{L^2(0,T;W)}\le c\|\bu\|^2_{L^2(0, T; \MH)}.
\end{align}
To this end, we note that $(\bz, \zeta, \tau, \gm)$ satisfy the following problem:
\begin{align}
    &\eta\io\nabla\bz:\nabla\by \, dx+\io\lm(\vphi)\bz\cdot\by \, dx=\io\ba{\mu}(\nabla\ba{\vphi}\cdot\by) \, dx, +\io\tau(\nabla\vphi\cdot\by) \, dx+\io\mu(\nabla\zeta\cdot\by) \dx+\io\Lm_1\cdot\by \, dx,\label{z-equ}\\
    &\langle\pat\zeta, y\rangle+\io\nabla\tau\cdot\nabla y \, dx=-\io(\bz\cdot\nabla\varphi)y \, dx-\io(\bv\cdot\nabla\zeta)y \, dx-\sigma\io\zeta y\, dx-\io(\ba{\bv}\cdot\nabla\ba{\vphi})y\, dx+\io\Lm_2 y\, dx,\label{zeta_equ}\\
    &\io\tau y dx= \io\nabla\gm\cdot\nabla y dx+\io(f'(\vphi)+\nu)\gm y\, dx+\io\Lm_3 y \, dx,\label{1st tau equ}\\
    &\io\gm y\, dx=\io\nabla\zeta\cdot\nabla y \, dx+\io\Lm_4y \, dx,\label{gm equ}
    \\
    & \quad\zeta(0)=0,\label{zeta_in}
    \end{align}
      for every $\by\in\MV_{\td}$, $y\in V,$  and a.e. in  $(0, T)$, where we have set 
      \begin{align*}
          &\Lm_1=(\lm(\vphi^u)-\lm(\vphi))\ba{\bv}+(\lm(\vphi^u)-\lm(\vphi)-\lm'(\vphi)\psi)\bv,\\
          &\Lm_2=h(\vphi^u)-h(\vphi)-h'(\vphi)\psi,\\
          &\Lm_3=(f'(\vphi^u)-f'(\vphi))\ba{w}+(f'(\vphi^u)-f'(\vphi)-f''(\vphi)\psi)w,\\
          &\Lm_4=f(\vphi^u)-f(\vphi)-f'(\vphi)\psi.
      \end{align*} Now combine \eqref{1st tau equ} and \eqref{gm equ} together to obtain \begin{align}
          &\io\tau y_1 \, dx=\io(-\Delta\zeta+\Lm_4)(-\Delta y_1) \, dx+\io(f'(\vphi)+\nu)(-\Delta\zeta+\Lm_4)y_1 \, dx+\io\Lm_3 y_1 \,dx,\label{tau_equ}
      \end{align}
     for every $y_1\in W$. Let us recall the Taylor formula for a $C^2$ function $g:\rea\to\rea$ with integral remainder as 
      \begin{align}\label{tylor}
          g(\vphi^u)-g(\vphi)-g'(\vphi)\psi=g'(\vphi)\zeta+R_g, \text{ with } R_g=\Big[\int_0^1(1-s)g''(s\vphi^u+(1-s)\vphi) \, ds\Big](\vphi^u-\vphi)^2.
      \end{align}
      Accordingly, we can rewrite $\Lm_i, i=1, \cdots 4$ using integral remainder $R_g$ for $g\in\{\lm, h, f, f'\}.$ Also note that, due to uniform boundedness of $\vphi^u$, $\vphi$ in $L^\infty(Q)$ and the integration variable s varies from $0$ to $1$, we have 
      \begin{align}\label{est-R}
          \|R_g\|_\infty \leq c\|\vphi^u-\vphi\|^2_\infty .
          \end{align}
          Now we need to find a solution of the variational problem \eqref{z-equ}--\eqref{zeta_in} that satisfies the estimate \eqref{Diff_est}. This can be achieved through a discretization procedure such as the Faedo--Galerkin scheme, employing the same set of eigenfunctions as those used in Proposition \ref{exist_lin_sys}. The analogous equations \eqref{z-equ}--\eqref{zeta_in}, satisfied for every $\by \in \MV_n$, and $y, y_1 \in V_n$, have their unknowns in \eqref{z-equ} taking values in $\MV_n$, while \eqref{zeta_equ}--\eqref{tau_equ} are $V_n$-valued. Thus, we obtain a system of ordinary differential equations, which can be solved by the Carathéodory existence theorem. Subsequently, one derives the required a priori estimates and passes to the limit as $n \to \infty$. 

For the sake of brevity, we argue only formally, noting that the test functions used here are admissible at the discrete level—particularly when considering the variational form of the problem, since $\Delta^k v \in V_n \subset W$ for every $k, n \in \mathbb{N}$ and $v \in V_n.$
      We test \eqref{z-equ}, \eqref{zeta_equ}, \eqref{tau_equ} by $\bz,$ $L(\zeta-\Delta\zeta)$, and  $L(-\Delta\zeta+\Delta^2\zeta-\tau)$, respectively, and following identities holds for a.e. in $(0, T)$:
      \begin{align}
          &\eta\|\nabla\bz\|^2+\io\lm(\vphi)|\bz|^2 dx=\io\ba{\mu}(\nabla\ba{\vphi}\cdot\bz) \, dx +\io\tau(\nabla\vphi\cdot\bz) \, dx+\io\mu(\nabla\zeta\cdot\bz) \dx+\io\Lm_1\cdot\bz \, dx,\label{F-diff-z}\\
          &\frac{L}{2}\frac{d}{dt}\|\zeta\|^2_V+L\io\nabla\tau\cdot\nabla\zeta \, dx+L\io\nabla\tau\cdot\nabla(-\Delta\zeta) \, dx=-L\io(\bz\cdot\nabla\varphi)\zeta -L\io(\ba{\bv}\cdot\nabla\ba{\vphi})\zeta \, dx\no\\&\quad-L\sigma\io(|\zeta|^2+|\nabla\zeta|^2) \, dx+L\io\Lm_2 \zeta\, dx-L\io(\bz\cdot\nabla\varphi)(-\Delta\zeta) -L\io(\ba{\bv}\cdot\nabla\ba{\vphi})(-\Delta\zeta) \, dx\no\\&\quad-L\io(\bv\cdot\nabla\zeta)(-\Delta\zeta)\, dx+L\io\Lm_2(-\Delta\zeta)\, dx\label{F-diff-zeta}\\
          &\text{as well as}\no\\
          &L\io\tau(-\Delta\zeta+\Delta^2\zeta-\tau)\, dx=L\io|\Delta^2\zeta|^2\, dx+L\io|\nabla\Delta\zeta|^2\, dx+\io(-\Delta\Lm_4)(-\Delta\zeta+\Delta^2\zeta-\tau) \, dx\no\\&\qquad+L\nu\io(-\Delta\zeta)(-\Delta\zeta+\Delta^2\zeta-\tau)\, dx+L\nu\io\Lm_4(-\Delta\zeta+\Delta^2\zeta-\tau)\, dx\no\\&\qquad+L\io f'(\vphi)(-\Delta\zeta)(-\Delta\zeta+\Delta^2\zeta-\tau)\, dx+L\io f'(\vphi)\Lm_4(-\Delta\zeta+\Delta^2\zeta-\tau)\, dx\no\\&\qquad+L\io\Lm_3(-\Delta\zeta+\Delta^2\zeta-\tau)\, dx,\label{F-diff-tau}
      \end{align}
   where $L$ is a positive constant whose value will be determined later. After adding the above three identities, some cancellation occur and the left hand side becomes 
     \begin{align}
         \al\|\bz\|^2_{\MV_{\td}}+\frac{L}{2}\frac{d}{dt}\|\zeta\|^2_V+L\io|\Delta^2\zeta|^2\, dx+L\io|\nabla\Delta\zeta|^2\, dx+L\io|\tau|^2\, dx,
     \end{align} where $\alpha = \min\{\eta, \lambda_\ast\}$, and we now estimate each term on the right-hand side. In doing so, we repeatedly apply several inequalities from \eqref{sobolev_ineq}--\eqref{GN-H3}, as well as H\"older, Young, and Poincar\'e inequalities, and occasionally integration by parts. Many of the terms are similar to those appearing in \eqref{test-w}--\eqref{test-tht}; therefore, we begin with the terms that are new and later briefly estimate the analogous ones. We start with the right-hand side of \eqref{F-diff-z} and estimate it using \eqref{cont-depend-est}, \eqref{est-R} and (A2) as follows:
     \begin{align}
         \io\Lm_1\cdot\bz \, dx=&\io(\lm(\vphi^u)-\lm(\vphi))\ba{\bv}\cdot\bz \, dx+\io(\lm'(\vphi)\zeta+R_\lm)\ba{\bv}\cdot\bz\, dx\no\\\leq &\|\vphi^u-\vphi\|_\infty\|\ba{\bv}\|\|\bz\|+\|\lm'(\vphi)\|_\infty\|\zeta\|_{L^4}\|\bv\|_{\ML^4}\|\bz\|+c\|\vphi^u-\vphi\|^2_\infty\|\bv\|\|\bz\|\no\\
         \leq & \frac{\al}{6}\|\nabla\bz\|^2+c_\al\|\ba{\vphi}\|^2_{H^2}\|\ba{\bv}\|^2+c_\al\|\bv\|^2_{\MV_{\td}}\|\zeta\|^2_V+c_\al\|\bv\|^2\|\ba{\vphi}\|^4_{H^2}.
     \end{align}
     Other three terms on the right hand side of \eqref{F-diff-z} can be estimated as follows
   \begin{align}
      \io\ba{\mu}(\nabla\ba{\vphi}\cdot\bz) \, dx &\leq \frac{\al}{6}\|\nabla\bz\|^2+c_\al\|\ba{\mu}\|^2\|\nabla\ba{\vphi}\|^2_{L^4}\leq \frac{\al}{6}\|\nabla\bz\|^2+c_\al\|\ba{\mu}\|^2\|\ba{\vphi}\|^2_{H^2},\\
      \io\tau(\nabla\vphi\cdot\bz) \, dx &\leq C_S\|\tau\|\|\nabla\vphi\|_{L^4}\|\nabla\bz\|
      \leq \frac{\al}{6}\|\nabla\bz\|^2+\frac{C_S^2\|\vphi\|^2_{H^2}}{4\al^2}\|\tau\|^2,\no\\
      \io\mu(\nabla\zeta\cdot\bz) \, dx&\leq \frac{\al}{6}\|\nabla\bz\|^2+c_\al\|\mu\|^2_V\|\nabla\zeta\|^2.\no
   \end{align}
   Next, we consider the right hand side of \eqref{F-diff-zeta}. Before that, let us calculate $\|\nabla R_g\|_\infty$. Recalling \eqref{tylor} we have
   \begin{align}
       \nabla R_g=&2\Big[\int_0^1(1-s)g''(s\vphi^u+(1-s)\vphi) \, ds\Big]\ba{\vphi}\nabla\ba{\vphi}\no\\&+\ba{\vphi}^2\Big[\int_0^1(1-s)g'''(s\vphi^u+(1-s)\vphi)(s\nabla\vphi^u+(1-s)\nabla\vphi) \, ds\Big].
   \end{align} owing to the uniform boundedness of both $\nabla\vphi^u$ and $\nabla\vphi$ from \eqref{unif_est_weak sol}, we get
   \begin{align}\label{est-grad-R}
       \|\nabla R_g\|_\infty\leq c\|\ba{\vphi}\|^2_{H^3}.
   \end{align}
  Then, using some estimates from \eqref{sobolev_ineq}--\eqref{GN-H3}, together with \eqref{tylor}, \eqref{est-R}, \eqref{cont-depend-est 3.34}, and \eqref{est-grad-R}, we obtain that
   \begin{align}
       L\io\Lm_2(\zeta-\Delta\zeta) \, dx &=L \io(h'(\vphi)\zeta+R_h)(\zeta-\Delta\zeta) \, dx\no\\&\leq L\|h'(\vphi)\|_\infty\|\zeta\|^2_V+\|R_h\|\|\zeta\|+\|\nabla R_h\|\|\nabla\zeta\|\no\\&\leq c\|\zeta\|^2_V+c\big(\|R_h\|^2_\infty+\|\nabla R_h\|^2_\infty\big)\no\\&\leq c\|\zeta\|^2_V+c\|\ba{\vphi}\|^4_{H^3}.
   \end{align}
 We now proceed to estimate the remaining terms in \eqref{F-diff-zeta}. Since similar terms have already appeared in \eqref{test-psi}–\eqref{test-del-psi} and were treated in detail in the previous subsection, we provide only brief estimates here.
   \begin{align*}
       &-L\io(\bz\cdot\nabla\varphi)\zeta \, dx \leq \frac{\al}{6}\|\nabla\bz\|^2+\frac{3L^2C_S}{2\al}\|\nabla\vphi\|^2_{L^4}\|\zeta\|^2\no\\& \qquad\qquad\qquad\qquad\quad
       \leq \frac{\al}{6}\|\nabla\bz\|^2+c_{\al, L}\|\nabla\vphi\|^2_{L^4}\|\zeta\|^2,\\
      &-L\io(\ba{\bv}\cdot\nabla\ba{\vphi})\zeta \, dx=L\io(\ba{\bv}\cdot\nabla\zeta)\ba{\vphi} \, dx \leq c_L\|\nabla\zeta\|^2+c_L\|\nabla\ba{\bv}\|^2\|\ba{\vphi}\|^2_V,\\
       &-L\sigma\io(|\zeta|^2+|\nabla\zeta|^2) \, dx\leq c_L\|\zeta\|^2_V,\\
       &-L\io(\bz\cdot\nabla\vphi)(-\Delta\zeta) \, dx\leq \frac{\al}{6}\|\nabla\bz\|^2+\delta\|\Delta^2\zeta\|^2+c_{\al,\delta,L}\|\vphi\|^4_{H^2}\|\zeta\|^2_V,\\
       &-L\io(\ba{\bv}\cdot\nabla\ba{\vphi})(-\Delta\zeta) \, dx\leq \delta\|\Delta\zeta\|^2+c_{\delta, L}\|\ba{\bv}\|^2_{\ML^4}\|\nabla\ba{\vphi}\|^2\no\\&\qquad\qquad\qquad\qquad\qquad\quad\leq \delta\|\Delta^2\zeta\|^2+c_{\delta}\|\zeta\|^2_V+c_{\delta, L}\|\ba{\bv}\|^2_{\ML^4}\|\nabla\ba{\vphi}\|^2,\\
        &-L\io(\bz\cdot\nabla\zeta)(-\Delta\zeta) \leq \delta\|\Delta^2\zeta\|^2+c_\delta\|\zeta\|^2_V+c_L\|\nabla\bv\|^2\|\nabla\zeta\|^2.
   \end{align*}
  To estimate the right-hand side of \eqref{F-diff-tau}, we consider a generic function $z\in L^2(0,T;H)$, since all such terms can be handled in a similar manner. As mentioned earlier, we first focus on the terms that did not appear in the previous estimates. Let us first compute
   \begin{align*}
       \Delta R_g=&2|\nabla\ba{\vphi}|^2\int_0^1(1-s)f''(s\vphi^u+(1-s)\vphi) \, ds+2\ba{\vphi}\Delta\ba{\vphi}\int_0^1(1-s)f''(s\vphi^u+(1-s)\vphi) \, ds\no\\&+4\ba{\vphi}\nabla\ba{\vphi}\int_0^1(1-s)f'''(s\vphi^u+(1-s)\vphi)(s\nabla\vphi^u+(1-s)\nabla\vphi) \, ds\no\\&+\ba{\vphi}^2\int_0^1(1-s)f^{(iv)}(s\vphi^u+(1-s)\vphi)|s\vphi^u+(1-s)\vphi|^2 \, ds\no\\&+\ba{\vphi}^2\int_0^1(1-s)f'''(s\vphi^u+(1-s)\vphi)(s\Delta\vphi^u+(1-s)\Delta\vphi) \, ds.
   \end{align*} 
   Using above identity we deduce that
   \begin{align*}
       \|\Delta R_g\|^2\leq & c\io\Big(|\nabla\ba{\vphi}|^4+|\ba{\vphi}|^2|\Delta\ba{\vphi}|^2+|\ba{\vphi}|^4(|\nabla\vphi^u|+|\nabla\vphi|)^2+|\ba{\vphi}|^4(|\Delta\vphi^u|+|\Delta\vphi|)^2\Big) \, dx\\
       \leq &\|\nabla\ba{\vphi}\|^4_{\ML^4}+\|\ba{\vphi}\|^2_{L^4}\|\Delta\ba{\vphi}\|^2_{L^4}+c\|\ba{\vphi}\|^4_\infty(\|\nabla\vphi^u\|^2+\|\nabla\vphi\|^2)+\|\ba{\vphi}\|^4_\infty(\|\Delta\vphi^u\|^2+\|\Delta\vphi\|^2).
   \end{align*} Thanks to \eqref{cont-depend-est} we have 
   \begin{align*}
       \|\Delta R_g\|^2_{L^2(0, T, H)}\leq &c\Big(\|\nabla\ba{\vphi}\|^4_{L^4(0, T; H^2)}+\|\ba{\vphi}\|^2_{L^\infty(), T; H^1)}\|\ba{\vphi}\|^2_{L^2(0, T; H^3)}\\&+\|\ba{\vphi}\|^4_{L^\infty(0, T; H^2)}(\|\vphi^u\|^2_{L^2(0, T; H^2)}+\|\vphi\|^2_{L^2(0, T; H^2)})\\&+\|\ba{\vphi}\|^4_{L^\infty(0, T; H^2)}(\|\vphi^u\|^2_{L^2(0, T; H^2)}+\|\vphi\|^2_{L^2(0, T; H^2)}\Big)\no\\\leq & K_4\Big(\|\vphi^u\|^2_{L^2(0, T; H^2)},\|\vphi\|^2_{L^2(0, T; H^2)}\Big)\|\bu\|^4_{L^2(0, T; \MH)}.
   \end{align*} 
   Then using above estimate on $R_f$ and \eqref{GN-H3}, we obtain
   \begin{align}\label{1st z}
       &-L\io(-\Delta\Lm_4)z \, dx=-L\io-\Delta(f'(\vphi)\zeta+R_f)z\, dx\no\\
&=L\io(f'''(\vphi)|\nabla\vphi|^2\zeta+f''(\vphi)\Delta\vphi\zeta+2f''(\vphi)\nabla\vphi\cdot\nabla\zeta+f'(\vphi)\Delta\zeta)z \, dx+L\io(\Delta R_f)z\, dx\no\\
       &\leq c_L(\|f'''(\vphi)\|_\infty\|\nabla\vphi\|^2_{\ML^8}\|\zeta\|_{L^4}+\|f''(\vphi)\|_\infty\|\Delta\vphi\|_{L^4}\|\zeta\|_{L^4}+\|f''(\vphi)\|_\infty\|\nabla\vphi\|_\infty\|\nabla\zeta\|\no\\&\qquad+\|f'(\vphi)\|_\infty\|\Delta\zeta\|)\|z\|+L\|\Delta R_f\|\|z\|\no\\
       &\leq \delta\|z\|^2+c_{ \delta,L}\|f'''(\vphi)\|^2_\infty\|\vphi\|^4_V\|\zeta\|^2_V+c_{\delta,L}\|f''(\vphi)\|^2_\infty\|\vphi\|^2_{H^3}\|\zeta\|^2_V\no\\&\qquad+c_{ \delta,L}\|f''(\vphi)\|^2_\infty\|\Delta\zeta\|^2+c_{\delta,L}\|\Delta R_f\|^2\no\\
       &\leq \delta\|z\|^2+\delta\|\Delta^2\zeta\|^2+c_{ \delta,L}\|f'''(\vphi)\|^2_\infty\|\vphi\|^4_V\|\zeta\|^2_V+c_{ \delta,L}\|f''(\vphi)\|^2_\infty\|\vphi\|^2_{H^3}\|\zeta\|^2_V\no\\&\qquad+c_{ \delta,L}\|f''(\vphi)\|^4_\infty\|\zeta\|^2_V+c_{\delta,L}\|\Delta R_f\|^2.
   \end{align}
  Next, we estimate the last term of \eqref{F-diff-tau}:
   \begin{align}\label{2nd z}
       -L\io(\Lm_3)z \, dx&=L\io(f'(\vphi^u)-f'(\vphi))\ba{w}z\, dx+L\io(f''(\vphi)\zeta+R_{f'})wz\, dx\no\\
       &\leq L\|\vphi^u-\vphi\|_{L^4}\|\ba{w}\|_{L^4}\|z\|+\|f''(\vphi)\|_\infty\|\zeta\|_{L^4}\|w\|_{L^4}\|z\|+\|\vphi^u-\vphi\|^2_\infty\|w\|\|z\|\no\\
       &\leq \delta\|z\|^2+c_{\delta, L}\|\ba{\vphi}\|^2_V\|\ba{w}\|^2_V+c_{\delta, L}\|\ba{\vphi}\|^4_{H^2}\|w\|^2+c_{\delta, L}C_S\|f''(\vphi)\|^2_\infty\|w\|^2_V\|\zeta\|^2_V.
   \end{align}
   The remaining terms on the right-hand side of \eqref{F-diff-tau} are estimated in the same way, using \eqref{est-R} and \eqref{sobolev_ineq}–\eqref{GN-H3}.
   \begin{align}
       &-L\nu\io(-\Delta\zeta) z \, dx\leq \delta\|z\|^2+c_{\delta, L}\|\Delta\zeta\|^2\leq \delta\|z\|^2+\delta\|\Delta^2\zeta\|^2+\frac{C_S^2c_{\delta, L}^2}{\delta}\|\zeta\|^2_V,\label{3rd z}\\
       &-L\nu\io(\Lm_4)z\, dx=-L\nu\io(f'(\vphi)\zeta+R_f)z \, dx\leq \delta\|z\|^2+c_{\delta, L}\|f'(\vphi)\|^2_\infty\|\zeta\|^2+c_{\delta, L}\|\ba{\vphi}\|^4_\infty,\label{4th z}\\
       &-L\io f'(\vphi)(-\Delta\zeta)z \, dx\leq \delta\|z\|^2+\delta\|\Delta^2\zeta\|^2+\frac{C^2_Sc^2_{L,\delta}\|f'(\vphi)\|^4_\infty}{\delta}\|\zeta\|^2_V,\label{5th z}\\
       &-L\io f'(\vphi)\Lm_4 z\, dx\leq\delta\|z\|^2+c_{\delta, L}\|f'(\vphi)\|^4_\infty\|\zeta\|^2+c_{\delta, L}\|f'(\vphi)\|^2_\infty\|\ba{\vphi}\|^4_\infty.\label{6th z}
        \end{align}
        Taking $z=-\Delta\zeta+\Delta^2\zeta-\tau$ in \eqref{1st z}–\eqref{6th z} and estimating as in \eqref{3term expan}, we obtain the bounds for all terms on the right-hand side after adding \eqref{F-diff-z}–\eqref{F-diff-tau}. Finally, we observe the following points:
        \begin{enumerate}
        \item Since $\vphi\in L^\infty(Q)$ and $f$ satisfies the assumption(A4)-(A5), we have $\sum_{k=1}^5\|f^{(k)}(\vphi)\|_{L^\infty(Q)} \leq C$, for some positive constant $C.$
            \item Noting that $\ms(\bg)=(\bv,\vphi,\mu,w)$ is a solution of \eqref{CHB}, it satisfies Theorem \ref{CHB-wellposed}. Hence, from \eqref{unif_est_weak sol} we obtain
\begin{align*}\|\vphi(t)\|_{W}\leq K_1,\quad \text{for a.e. } t\in(0,T).\end{align*}
 \item Let us denote 
            \begin{align}
            \qquad \Phi_1(t)=&\|\bv(t)\|^2_{\MV_{\td}}+\|\mu(t)\|^2_V+\|\vphi(t)\|^2_{H^2}+\|\vphi(t)\|^4_{H^2}+\|f'''(\vphi(t))\|^2_\infty\|\vphi(t)\|^4_V\no\\&+\|f''(\vphi(t))\|^2_\infty(\|w(t)\|^2_V+\|\vphi(t)\|^2_{H^3})+\|f''(\vphi(t))\|^4_\infty+\|f'(\vphi(t))\|^2_\infty+\|f'(\vphi(t))\|^4_\infty+1,\no\\
            \Phi_2(t)=&\|\bv(t)\|^2+\|w(t)\|^2 +\|f'(\vphi(t))\|^2_\infty+\|\vphi^u(t)\|_{H^2} +\|\vphi(t)\|^2_{H^2}+1.\no
            \end{align}
             Then, using Theorem \ref{CHB-wellposed}, we have $\Phi_1$ in $L^1(0, T)$ and $\Phi_2$ in $L^\infty(0, T).$
       \end{enumerate}
        At this stage, gathering all the inequalities derived above and taking into account the preceding observations, we combine them into the sum appearing in \eqref{F-diff-z}–\eqref{F-diff-tau}. After a suitable rearrangement, we obtain
        \begin{align}
           &\frac{L}{2}\|\zeta(t)\|^2_V+\frac{\al}{2}\int_0^t\|\bz(s)\|^2_{\MV_{\td}} ds+(L-26\delta)\int_0^t\|\Delta^2\zeta(s)\|^2 ds +L\int_0^t\|\nabla\Delta\zeta(s)\|^2 ds\no\\&\quad+\Big(L-15\delta-\frac{C^2_SK^2_1}{4\al^2}\Big)\int_0^t\|\tau(s)\|^2 ds\leq C\int_0^t\Phi_1(s)\|\zeta(s)\|^2_V ds\no\\&\quad+C\int_0^t\Phi_2(s)\left(\|\ba{\vphi}(s)\|^4_{H^2}+\|\ba{\vphi}(s)\|^2_V\|\ba{w}(s)\|^2_V+\|\ba{\vphi}(s)\|^2_V\|\ba{\bv}(s)\|^2_{\MV_{\td}}+\|\ba{\vphi}(s)\|^4_{H^3}+\|\ba{\mu}(s)\|^2_V\|\ba{\vphi}(s)\|^2_V\right) ds,\no
        \end{align} 
      for every $t\in[0,T]$, where the constant $C$ in the above inequality depends on $L$, $\al$, $\delta$, and the Sobolev embedding constant $C_S$.  Choosing $\delta$ and $L$ appropriately (for instance, $L>\frac{13C_S^2K_1^2}{37\al^2}$ and $\delta=\frac{L}{52}$) and applying the generalized Gronwall inequality [cf. Lemma \ref{lem:Gronwall}], we obtain

        \begin{align}\label{z-xi-tau-bd}
            \|\bz\|_{L^2(0, T; \MV_{\td})}+\|\zeta\|_{L^\infty(0, T, V)\cap L^2(0, T; H^4)}+\|\tau\|_{L^2(0, T; H)}\leq K_5\|\bu\|^2_{L^2(0, T; \MH).}
        \end{align} 
       Now, using the expression of $\gm$ in \eqref{gm equ}, together with \eqref{z-xi-tau-bd} and an estimate for $\Lambda_4$ analogous to that in \eqref{1st z}, and then invoking elliptic regularity theory, it follows that
\begin{align}\label{gm-bd}
        \|\gm\|_{L^2(0, T; W)} \leq K_5 \|\bu\|^2_{L^2(0, T; \MH)}.
\end{align}
Using the above two estimates in conjunction with \eqref{zeta_equ}, we obtain the following estimate for the time derivative of $\zeta$:
\begin{align}\label{t-zeta-bd}
            \|\pat\zeta\|_{L^2(0, T; W')}\leq K_5\|\bu\|^2_{L^2(0, T; \MH)}. 
        \end{align}
        Now adding the inequalities \eqref{z-xi-tau-bd}-\eqref{t-zeta-bd} we get \eqref{Diff_est}, which conclude the proof.
        \end{proof}
\begin{remark}
    The inequality \eqref{Diff_est} also shows that the map 
\[
\tilde{\ms}:\U \to \W \times L^2(0, T; H) \times L^2(0, T; W),
\qquad 
\tilde{\ms}(\bg) = (\bv, \vphi, \mu, w),
\]
is Fr\'echet differentiable, where $(\bv, \vphi, \mu, w)$ denotes the solution of the state system \eqref{CHB} corresponding to the control $\bg$. 
This implies that we may additionally include the terms 
\[
\frac{\beta_5}{2}\int_Q |\mu - \mu_Q|^2 
\;+\; 
\frac{\beta_6}{2}\int_Q |w - w_Q|^2,
\]
with target functions $\mu_Q,\, w_Q \in L^2(Q)$ and coefficients $\beta_5,\, \beta_6 \ge 0$, in the cost functional \eqref{cost functional}.  
The entire analysis carries over with only minor modifications. 
\end{remark}
\section{The optimal control problem.}
In this section, we study the existence of an optimal control problem $\textbf{(CP)}_\eta$ with the cost functional defined in \eqref{cost functional} and characterized it using first order necessary optimality condition. 
Note that, if $\kappa=0$, the cost functional $\J=J$ is differentiable, and the optimality system together with the first-order necessary optimality condition can be derived by the Lagrange multiplier method; see \cite[Chapter 2, Theorem 1.9]{fursikov_book}. 
However, when $\kappa\neq 0$, this approach is not applicable, and we instead rely on \cite[Theorem 3.2]{CT20} to obtain the optimality condition. 
Thus, we consider the two cases $\kappa=0$ and $\kappa>0$ separately. 
We begin by proving the existence of a solution to the control problem $\textbf{(CP)}_\eta$.

\subsection{Existence of an optimal control.} 
We note from the operator $\ms$ that control can uniquely define the solution $(\bv,\vphi, \mu, w)$ of the system \eqref{CHB}. Then the cost functional given in \eqref{cost functional} can be defined in terms of control only.  Therefore, now we define the reduced cost-functional $ \Tilde{\mathcal{J}} : \mathcal{U} \rightarrow [0, \infty)$ as 
\begin{align}\label{reformulated}
     \Tilde{\mathcal{J}}(\bg) := \mathcal{J}((\bv, \vphi), \bg)= \mathcal{J}(\mathcal{S}(\bg), \bg);=\tilde{J}(\bg)+\kappa j(\bg).
\end{align}
In the following theorem we show that the optimal control problem $\textbf{(CP)}_\eta$ admits at least one solution.
\begin{theorem}\label{exist-op-control}
    There exist at least one solution for the control problem $\textbf{(CP)}_\eta$. That is, there exists a $\bg^\ast\in\U_{ad}$ such that 
    $$\J(\ms(\bg^\ast), \bg^\ast)\leq \J(\ms(\bg), \bg) \quad \text{ for every }\bg\in\U_{ad}.$$
\end{theorem}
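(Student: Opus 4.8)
The plan is to use the \emph{direct method of the calculus of variations}. Since $\bg\equiv 0$ belongs to $\U_{ad}$ and $\tilde{\J}(\bg)\ge 0$ for every $\bg$, the infimum $d:=\inf_{\bg\in\U_{ad}}\tilde{\J}(\bg)$ is a finite nonnegative number, and we may pick a minimizing sequence $(\bg_n)_{n\in\nat}\subset\U_{ad}$ with $\tilde{\J}(\bg_n)\to d$. First I would note that $\U_{ad}$ is a bounded, closed, convex subset of the Hilbert space $\U=L^2(0,T;\MH)$, hence weakly sequentially compact; thus, up to a subsequence (not relabelled), $\bg_n\rightharpoonup\bg^\ast$ weakly in $\U$, and since $\U_{ad}$ is weakly closed (closed + convex), $\bg^\ast\in\U_{ad}$.

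Next I would pass to the limit in the state system. Writing $(\bv_n,\vphi_n,\mu_n,w_n)=\ms(\bg_n)$ together with the fourth component, Theorem \ref{CHB-wellposed}(i) and the uniform bound $\|\bg_n\|_\U\le M$ give the uniform estimate \eqref{unif_est_weak sol} with a constant $K_1$ independent of $n$. By Banach--Alaoglu I extract a further subsequence so that $\bv_n\rightharpoonup\bv^\ast$ in $L^2(0,T;\MV_{\td})$, $\vphi_n\rightharpoonup\vphi^\ast$ in $H^1(0,T;V')\cap L^2(0,T;H^5(\Omega))$ and weak-$\ast$ in $L^\infty(0,T;W)$, $\mu_n\rightharpoonup\mu^\ast$ in $L^2(0,T;V)$, and $w_n\rightharpoonup w^\ast$ in $L^2(0,T;H^3(\Omega)\cap W)$. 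By the Aubin--Lions--Simon lemma, $\vphi_n\to\vphi^\ast$ strongly in, say, $C^0([0,T];H^{4-\epsilon}(\Omega))$ and a.e.\ in $Q$, and similarly $w_n\to w^\ast$ strongly in $L^2(0,T;H)$; together with the continuity of $f,f',h,\lm$ and their uniform bounds (from $\vphi^\ast\in L^\infty(Q)$ and (A2)--(A5)) this lets me pass to the limit in every term of the weak formulation \eqref{v weak form}--\eqref{w weak form}, including the nonlinear products $\lm(\vphi_n)\bv_n$, $\bv_n\cdot\nabla\vphi_n$, $\mu_n\nabla\vphi_n$, $f'(\vphi_n)w_n$ and $f(\vphi_n)$ (a weak-times-strong argument in each case), and in the initial condition. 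Hence $(\bv^\ast,\vphi^\ast,\mu^\ast,w^\ast)$ is a weak solution associated to $\bg^\ast$; by uniqueness (Theorem \ref{CHB-wellposed}(ii)) it equals $\ms(\bg^\ast)$ in its first two components, so $(\bv^\ast,\vphi^\ast)=\ms(\bg^\ast)$.

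Finally I would invoke weak lower semicontinuity of the cost functional. The quadratic tracking terms $\tfrac{\beta_1}{2}\int_Q|\cdot-\bv_Q|^2$, $\tfrac{\beta_2}{2}\int_Q|\cdot-\vphi_Q|^2$, $\tfrac{\beta_3}{2}\int_\Omega|\cdot(T)-\vphi_T|^2$ and $\tfrac{\beta_4}{2}\int_Q|\cdot|^2$ are continuous and convex on the relevant $L^2$ spaces, hence weakly lower semicontinuous; for the endpoint term I use that $\vphi_n\to\vphi^\ast$ in $C^0([0,T];H)$, so $\vphi_n(T)\to\vphi^\ast(T)$ in $H$ and that term even converges. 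The $L^1$-term $\kappa j(\bg)=\kappa\int_Q|\bg|$ is convex and strongly continuous on $L^2(Q)$, hence weakly lower semicontinuous as well. Combining, $\tilde{\J}(\bg^\ast)\le\liminf_{n\to\infty}\tilde{\J}(\bg_n)=d$, and since $\bg^\ast\in\U_{ad}$ the reverse inequality $d\le\tilde{\J}(\bg^\ast)$ is trivial; therefore $\bg^\ast$ is a minimizer, which is the claim. I expect the main technical obstacle to be the limit passage in the nonlinear terms of the sixth-order Cahn--Hilliard part, specifically extracting enough strong convergence of $\vphi_n$ (and $w_n$) via Aubin--Lions to control products like $\mu_n\nabla\vphi_n$ and the $f$-nonlinearities; but the uniform regularity \eqref{unif_est_weak sol} is precisely strong enough to make this routine.
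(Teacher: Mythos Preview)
Your proposal is correct and follows essentially the same direct-method argument as the paper: take a minimizing sequence in the bounded, closed, convex set $\U_{ad}$, extract weak limits of the controls and the corresponding states via the uniform bound \eqref{unif_est_weak sol}, use Aubin--Lions--Simon to upgrade $\vphi_n$ to strong convergence so as to pass to the limit in the weak formulation, identify the limit state as $\ms(\bg^\ast)$, and conclude by weak lower semicontinuity of $\tilde{\J}$. Your write-up is in fact a bit more detailed than the paper's (you spell out the weak-times-strong handling of each nonlinear term and the lower semicontinuity of each piece of the cost), and your choice to cite \eqref{unif_est_weak sol} for the uniform state bounds is the natural one.
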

\begin{proof}
    From \eqref{reformulated}, we can write the optimal control problem $\textbf{(CP)}_\eta$ as $$\min_{\bg\in\mathcal{U}_{ad}} \Tilde{\J}(\bg).$$ 
First note that $\Tilde{\J}$ is bounded from below and nonnegative as $\beta_i\geq 0$, for $i=1, \cdots, 4$. As a consequence, the infimum exists and finite, so, let us take $j = \inf_{\bg \in \U_{ad}} \Tilde{\J}(\bg)$.  Also, since $ \U_{ad}$ is nonempty, there exists a minimizing sequence $\bg_n \in \mathcal{U}_{ad}$ such that
\begin{align*}
    \lim_{n \rightarrow \infty} \Tilde{\J}(\bg_n) = j.
\end{align*} 
Thanks to  the definition of $\U_{ad}$, there exists a subsequence $\{\bg_{n_k}\}$ (still denoted by $\{\bg_n\}$) such that
$$\bg_n\to\bg^\ast \quad \text{ weakly 
in } L^2(0, T; \MH),$$
for some $\bg^\ast\in\U_{ad}$, as $\U_{ad}$ is weakly closed subset of $\U$  (cf. \cite[Proposition 5.1.4]{K23}). Now, for the weakly convergent subsequence $\{\bg_n\}$ we consider $\ms(\bg_n)=(\bv_n,\vphi_n),$ where $\bv_n,\vphi_n$ are the first two component of the solution state \eqref{CHB}-\eqref{in CHB} corresponding to control $\bg_n.$  Then, from the stability estimate \eqref{cont-depend-est}, we obtain a subsequence of $(\bv_{n_k}, \vphi_{n_k}, \mu_{n_k}, w_{n_k})_{k\in\mathbb{N}}$ (still denoted by $(\bv_n, \varphi_n, \mu_n, w_n)$) such that
\begin{equation}
    \left\{
\begin{aligned}
    &\bv_n\to\bv^\ast \quad\text{ weakly in } L^2(0, T; \MV_{\td}),\\
    &\vphi_n\to \vphi^\ast\quad\text{ weakly in } C^0([0, T]; V)\cap L^2(0, T; H^4),\\
    &\mu_n\to\mu^\ast\quad\text{ weakly in } L^2(0, T; H),\\
    &w_n\to w^\ast\quad\text{ weakly in } L^2(0,T; W).
\end{aligned}
\right.
\end{equation}
Above convergence property of $\vphi_n$ and well-known strong convergence results (cf. \cite[Section 8, Corollary 4]{JSimon} imply $$\vphi_n\to\vphi^\ast \quad\text{ strongly in }L^\infty(Q).$$ Using these convergence results on $(\bv_n, \vphi_n, \mu_n, w_n)$, we can easily pass to the limit in the weak formulation of \eqref{CHB}-\eqref{in CHB} (see \eqref{v weak form}-\eqref{w weak form}) and conclude $\ms(\bg^\ast)=(\bv^\ast, \vphi^\ast).$\\
Since $\Tilde{\J}$ is weakly lower semi-continuous on $\mathcal{U}_c$, we have (see \cite[Corollary 5.1.5]{K23})
\begin{align*}
   \Tilde{\J}(\bg^\ast) \leq \liminf_{n \rightarrow \infty}\Tilde{\J}(\bg_n)=j.
\end{align*}
Therefore, $j \leq \Tilde{\J}(\bg^\ast) \leq  \liminf_{n \rightarrow \infty}\Tilde{\J}(\bg_n) = j,$\\
which yields that $ \bg^\ast$ is a solution of the problem $\textbf{(CP)}_\eta$.
\end{proof}
\begin{definition}\label{opt_sol}
  We call $\bg^\ast $ to be an optimal control and the corresponding solution of the system \eqref{CHB} denoted by $((\bv^\ast, \varphi^\ast, \mu^\ast, w^\ast), \bg^\ast) $ is refer as the optimal solution of the problem $\textbf{(CP)}_\eta$.
\end{definition}
\subsection{The optimality system.} In this subsection we particularly take $\kappa=0$. Having prove existence of optimal solution, we now use Lagrange multiplier principle to obtain an optimality system that an optimal solution and Lagrange multiplier should satisfy. For that,  let us recall abstract  Lagrange multipliers method from \cite[Theorem 1, Theorem 3, Chapter 1]{IT79} and \cite{fursikov_book}.\\
Let $X_1,$ $X_2$ be two Banach spaces. Let $f:X_1\to \mathbb{R}$ and $g:X_1\to \mathbb{R}$ be two functionals and $G:X_1\to X_2$ be a mapping. We find a $z\in X_1$ such that
\begin{align}\label{min}
    & f(z)=\inf_{u\in\mathcal{Z}_{ad}}f(u),\\
    \text{where } & \mathcal{Z}_{ad}=\{u\in X_1: G(u)=0, g(u)\leq 0\}.\no
\end{align}
The Lagrange functional for the minimization problem \eqref{min} is defined by 
\begin{align}\label{abs_L}
    \mathcal{L}(z, \lambda_0,\lambda,q)=\lambda_0f(z)+\langle G(z), q\rangle+\lambda g(z)
\end{align}
for all $z\in X_1, \lambda_0, \lambda\in\mathbb{R},$ and $q\in X_2',$ where $\langle\cdot,\cdot\rangle$ denotes the duality pairing between $X_2$ and $X_2'.$ Then we have following abstract Lagrange principle:
\begin{theorem}\label{ALMP}(\cite[Chapter 2, Theorem 1.9]{fursikov_book}) Let $z$ be a solution of \eqref{min}. Assume that the mapping $f, g, G$ are continuously differentiable and that the image of the operator $G'(z):X_1\to X_2$ is closed. Then there exist  $q\in X_2'$ and $\lambda_0, \lambda\in\mathbb{R}$ such that the triplet $(q, \lambda_0,\lambda)\neq (0,0,0)$. Moreover, it satisfies the relations
\begin{align*}
    \langle\mathcal{L}_z(z, \lambda_0, \lambda,q), h\rangle=0 \quad \forall \, h \, \in \, X_1,\\
    \lambda_0, \lambda\geq 0, \text{ and }\lambda g(z)=0,
\end{align*}
 where $\mathcal{L}_z(\cdot,\cdot,\cdot,\cdot)$ denotes the Fr\'echet derivative of $\mathcal{L}$ with respect to first variable. Furthermore, if $G'(z):X_1\to X_2$ is an epimorphism and the constraint $g(z)\leq 0$ is absent in \eqref{min}, then  $\lambda_0\neq 0$ and can be taken as 1.  
\end{theorem}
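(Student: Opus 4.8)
This is the classical abstract Lagrange multiplier (Karush--Kuhn--Tucker) rule in Banach spaces, so the plan is to run the standard argument, splitting according to whether the closed subspace $\mathrm{Im}\,G'(z)\subseteq X_2$ is proper or all of $X_2$. \textbf{Degenerate case: $\mathrm{Im}\,G'(z)\neq X_2$.} Since this image is closed by hypothesis, it is a proper closed subspace, and Hahn--Banach supplies a nonzero $q\in X_2'$ annihilating it, i.e. $\langle G'(z)h,q\rangle=0$ for every $h\in X_1$. Take $\lambda_0=\lambda=0$. Then $\langle\mathcal{L}_z(z,\lambda_0,\lambda,q),h\rangle=\lambda_0\langle f'(z),h\rangle+\langle G'(z)h,q\rangle+\lambda\langle g'(z),h\rangle=0$ for all $h$, the sign conditions and the complementarity $\lambda g(z)=0$ hold trivially, and the triple is nonzero because $q\neq0$. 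This is precisely where the closedness hypothesis on $\mathrm{Im}\,G'(z)$ is used.

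\textbf{Regular case, inequality inactive: $G'(z)$ onto $X_2$ and $g(z)<0$.} The key tool is the Lyusternik theorem: since $G$ is $C^1$ and $G'(z)$ is surjective, for each $h\in\ker G'(z)$ there is a $C^1$ curve $t\mapsto z(t)$ with $z(0)=z$, $z'(0)=h$ and $G(z(t))=0$ near $t=0$. Because $g(z)<0$, also $g(z(t))<0$ for small $t$, so each such $z(t)$ is admissible and $t\mapsto f(z(t))$ has a local minimum at $0$; differentiating gives $\langle f'(z),h\rangle=0$ for all $h\in\ker G'(z)$, i.e. $f'(z)\in(\ker G'(z))^\perp$. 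By the closed range theorem (using that $\mathrm{Im}\,G'(z)$ is closed), $(\ker G'(z))^\perp=\mathrm{Im}\,G'(z)^*$, so $f'(z)=-G'(z)^*q$ for some $q\in X_2'$; this is exactly $\mathcal{L}_z=0$ with $\lambda_0=1$, $\lambda=0$, which also yields the final assertion of the theorem when the inequality constraint is absent.

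\textbf{Regular case, inequality active: $G'(z)$ onto $X_2$ and $g(z)=0$.} I first claim there is no $h\in\ker G'(z)$ with $\langle f'(z),h\rangle<0$ and $\langle g'(z),h\rangle<0$: correcting such an $h$ to an admissible curve $z(t)$ with $G(z(t))=0$ by Lyusternik, the strict signs give $g(z(t))=t\langle g'(z),h\rangle+o(t)<0$ and $f(z(t))=f(z)+t\langle f'(z),h\rangle+o(t)<f(z)$ for small $t>0$, contradicting the minimality of $z$. Hence the image of $\ker G'(z)$ under the bounded map $h\mapsto(\langle f'(z),h\rangle,\langle g'(z),h\rangle)$ is a linear subspace of $\mathbb{R}^2$ missing the open negative quadrant; a two-dimensional separation (Gordan-type alternative) then produces $(\lambda_0,\lambda)$ with $\lambda_0,\lambda\geq0$, not both zero, such that $\lambda_0\langle f'(z),h\rangle+\lambda\langle g'(z),h\rangle\geq0$ and hence $=0$ on the subspace $\ker G'(z)$. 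Thus $\lambda_0 f'(z)+\lambda g'(z)\in(\ker G'(z))^\perp=\mathrm{Im}\,G'(z)^*$, which gives $q\in X_2'$ with $\lambda_0 f'(z)+G'(z)^*q+\lambda g'(z)=0$; the complementarity $\lambda g(z)=0$ holds since $g(z)=0$, and nontriviality of the triple follows from $(\lambda_0,\lambda)\neq(0,0)$.

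\textbf{Main obstacle.} The crux is the Lyusternik step --- turning a linearized feasible direction $h\in\ker G'(z)$ into a genuine $C^1$ feasible curve for the nonlinear equation $G=0$ --- which is a quantitative open-mapping/implicit-function theorem (Graves--Lyusternik) proved by a contraction argument built on a bounded right inverse of the surjection $G'(z)$; one must track the remainder terms carefully so that, in the active case, the strict linearized inequality survives the correction. Everything else (the infinite-dimensional Hahn--Banach separation, the two-dimensional cone separation, and the identification $(\ker G'(z))^\perp=\mathrm{Im}\,G'(z)^*$ via the closed range theorem) is routine once $\mathrm{Im}\,G'(z)$ is known to be closed.
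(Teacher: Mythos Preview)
The paper does not prove this theorem; it is quoted verbatim as a black-box result from Fursikov's monograph (see the citation in the theorem header) and is used only as a tool in Section~4. There is therefore no ``paper's own proof'' to compare your attempt against.

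Your outline is a correct sketch of the standard Banach-space Lagrange multiplier rule: the Hahn--Banach handling of the degenerate case, the Lyusternik tangent-space theorem combined with the closed range identity $(\ker G'(z))^\perp=\mathrm{Im}\,G'(z)^*$ in the regular inactive case, and the Gordan-type two-dimensional separation in the active case are exactly the ingredients one finds in the classical treatments (Ioffe--Tikhomirov, Fursikov). The only place to be a little more careful in a full write-up is the active case: when you correct $h$ to a feasible curve $z(t)$ via Lyusternik, you should make explicit that the correction is $o(t)$ so that the strict inequality $\langle g'(z),h\rangle<0$ indeed forces $g(z(t))<0$ for small $t>0$; you flag this yourself under ``Main obstacle,'' and it is the right place to flag it.
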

We now formulate the above theoretical framework in the context of our problem. To this end, we consider the following functional spaces:
\begin{align}
 &X_1:=\widetilde{\W}\times\U, \quad X_2:= L^2(0, T; \MV_{\td}')\times L^2(0, T; W')\times L^2(0, T; H)\times L^2(0, T; W'),\label{X2-norm}\\
 &\text{where }\widetilde{\W}:= \{(\bv, \psi)\in\W:\psi(0)=0, \, \text{ in }\Omega\}\times L^2(0, T; H)\times L^2(0, T; W).\no
\end{align}
We also denote \begin{align}\label{short-notation}\vk=(\bv, \vphi, \mu, w), \quad \vk^\ast=(\bv^\ast, \vphi^\ast, \mu^\ast, w^\ast), \quad \ell=(\bw, \psi, \tht, \xi), \, \text{ and }\,  \wp=(\bv^a, \vphi^a, \mu^a, w^a)\end{align}. 
Let $\mathbb{P}$ be the Leray projector defined as $\mathbb{P}:\MH^1(\Omega)'\to\MV_{\td}'$, and consider the mappings $f:X_1\to\mathbb{R}$ and $G:X_1\to X_2$ given by:
\begin{equation}\label{def-G}
\left\{
 \begin{aligned}
&f( \vk, \bg)=J(\vk, \bg),\\
&G\begin{pmatrix}
    \vk\\ \bg
\end{pmatrix}=\begin{pmatrix}
    \mathbb{P}[- \eta\Delta\bv+ \lm(\vphi)\bv-\mu\nabla\varphi-\bg]\\
    \pat\vphi +\bv\cdot \nabla \vphi-\Delta\mu+\sigma\vphi-h(\vphi)\\ -\Delta w+f'(\vphi)w+\nu w-\mu\\ -\Delta\vphi+f(\vphi)-w
\end{pmatrix}.
\end{aligned}
 \right.
 \end{equation}
Then we reformulate the optimal control problem $\textbf{(CP)}_\eta$ as follows
\begin{equation}\label{ref_CP}
\left\{
\begin{aligned}
    &f(\vk^\ast, \bg^\ast)=\inf_{(\vk, \bg)\in \W_{ad}}J(\vk, \bg),\\
    &\W_{ad}=\{\bg\in X_1: G((\vk, \bg)^t)=\mathbf{0}, \|\bg\|^2_{L^2(0, T; \MH)}-M^2\leq 0\}.
\end{aligned}
\right.
\end{equation}
The Lagrange functional of the extremal problem \eqref{ref_CP} is define by the formula
\begin{align}\label{Lf}
    \mathcal{L}(\vk, \bg, \lm_0, \lm_1, \wp)=\lm_0 f(\vk, \bg)+\langle G((\vk, \bg)^t), \wp^t\rangle_{X_2, X_2'}+\lm_1(\|\bg\|^2_{L^2(0, T; \MH)}-M^2).
\end{align}
Now we verify the hypothesis of Theorem \ref{ALMP}. From Theorem \ref{exist-op-control}, we conclude that there exists a solution $(\vk^\ast, \bg^\ast)\in\W_{ad}$ of the problem \eqref{ref_CP}.  Clearly, $f$ is continuously differentiable. Let $(\vk_n, \bg_n)$ be a sequence in $X_1$ such that $(\vk_n, \bg_n) \to (\vk, \bg)$ in $X_1$. Then, using \eqref{cont-depend-est}, we can conclude that $G((\vk_n,\bg_n)^t)\to G((\vk, \bg)^t)$ in $X_2$ (to conclude this convergence in $X_2$, \eqref{cont-depend-est} is a stronger estimate). The Fr\'echet derivative of $G$ with respect to $(\vk, \bg)$ evaluated at $(\vk^\ast, \bg^\ast)$, $G'((\vk^\ast, \bg^\ast)^t)\in\mathcal{L}(X_1, X_2)$ is given by
\begin{align}\label{der-G}
    {\small G'\begin{pmatrix}
        \vk^\ast\\\bg^\ast
    \end{pmatrix}\begin{pmatrix}
        \ell\\\bu
    \end{pmatrix}=\begin{pmatrix}
   \mathbb{P}(-\eta\Delta\bw+\lm'(\vphi^\ast)\psi\bv^\ast+\lm(\vphi^\ast)\bw-\tht\nabla\vphi^\ast-\mu^\ast\nabla\psi-\bu)\\
   \pat\psi+\bw\cdot\nabla\vphi^\ast+\bv^\ast\cdot\nabla\psi-\Delta\tht+\sigma\psi-h'(\vphi^\ast)\psi \\
      -\Delta\xi+f''(\vphi^\ast)\psi w^\ast+f'(\vphi^\ast)\xi+\nu\xi-\tht\\
      -\Delta\psi+f'(\vphi^\ast)\psi-\xi 
  \end{pmatrix}} \quad\text{ for every }(\ell, \bu)\in X_1.\end{align}
  It is immediate from the estimate \eqref{lin-energy} that $G'\begin{pmatrix}
        \vk^\ast\\\bg^\ast
    \end{pmatrix}\begin{pmatrix}
        \ell_n\\\bu_n
    \end{pmatrix}\to G'\begin{pmatrix}
        \vk^\ast\\\bg^\ast
    \end{pmatrix}\begin{pmatrix}
        \ell\\\bu
    \end{pmatrix}$ in $X_2$ whenever $(\ell_n, \bu_n)\to (\ell, \bu)$ in $X_1.$
  Now to show that $G'\begin{pmatrix}
        \vk^\ast\\\bg^\ast
    \end{pmatrix}$ is onto, we need to show for every  $(\bg_1, g_2, g_3, g_4)\in X_2$, there exists $(\ell, \bu)\in X_1$ such that the system \begin{align}\label{epi-equ}
     \small G'\begin{pmatrix}
        \vk^\ast\\\bg^\ast
    \end{pmatrix}\begin{pmatrix}
        \ell\\\bu
    \end{pmatrix}=(\bg_1, g_2, g_3, g_4)^t,
  \end{align}
  has a solution. Indeed, writing the above equation in variational form we get 
  \begin{align}
     &\eta\int_\Omega\nabla\bw :\nabla\bz dx+\int_\Omega\lm'(\vphi^\ast)\psi\bv^\ast\cdot\bz dx +\int_\Omega\lm(\vphi^\ast)\bw\cdot\bz \dx=\int_\Omega\tht\nabla\vphi^\ast\cdot\bz dx+\int_\Omega\mu^\ast\nabla\psi\cdot\bz dx+\int_\Omega\bu\cdot\bz dx\no\\&\qquad+\int_\Omega\bg_1\cdot\bz dx,\text{ for every }\bz\in\MV_{\td},\label{1st-equ}\\
     &\int_\Omega\pat\psi\rho dx+\int_\Omega (\bw\cdot\nabla\vphi^\ast)\rho dx+\int_\Omega(\bv^\ast\cdot\nabla\psi)\cdot\rho dx -\int_\Omega\tht\Delta\rho dx=-\int_\Omega\sigma\psi\rho dx+\int_\Omega h'(\vphi^\ast)\psi\rho dx\no\\&\qquad+\io g_2\rho dx \text{ for every }\rho\in W,\\
     &\int_\Omega\nabla\xi\cdot\nabla\rho dx +\int_\Omega f''(\vphi^\ast)\psi w^\ast\rho dx+\int_\Omega f'(\vphi^\ast)\xi\rho dx+\nu\int_\Omega\xi\rho dx=\int_\Omega\tht\rho dx+\io g_3\rho dx \text{ for every } \rho \in V,\\
     &\int_\Omega\nabla\psi\cdot\nabla\rho dx+\int_\Omega f'(\vphi)\psi\rho dx=\int_\Omega\xi\rho dx+\io g_4 \rho dx  \text{ for every } \rho \in W.
 \end{align}
We supplement this system with the initial and boundary conditions  
\begin{align}\label{last equ}
    \psi|_{t=0}=0 \text{ in } V, 
    \qquad 
    \pan\psi = \pan\tht = \pan\xi = 0 \text{ on } \Sigma.
\end{align}
Using Proposition \ref{exist_lin_sys}, we conclude that the system \eqref{1st-equ}--\eqref{last equ} admits a solution. 
Indeed, although the system contains additional terms involving $\bg_1$ and $g_i$, $i=2, 3, 4$, these can be estimated by means of H\"older's and Young's inequalities.  
Thus, all the hypotheses of Theorem \ref{ALMP} are verified, and therefore there exists $\wp \in X_2'$ such that
 \begin{align}\label{derivative_LP}
     \mathcal{L}_{\vk}(\vk^\ast, \bg^\ast, \lm_0, \lm_1, \wp)=0, 
 \end{align}
where $\mathcal{L}_{\vk}$ denotes the Fr\'echet derivative of $\mathcal{L}$ with respect to its first variable. Gathering the preceding arguments in Theorem \ref{ALMP}, we arrive at the following theorem
\begin{theorem}\label{exist_LM}
    Let $(\vk^\ast, \bg^\ast)$ be a solution of the optimal control problem $\textbf{(CP)}_\eta$. Then there exists $\wp\in X_2'$ ($\wp$ is define in \eqref{short-notation}) such that 
    \begin{align}\label{LP}
       & \lm_0\left(\beta_1\int_Q(\bv^\ast-\bv_Q)\cdot\bw\, dxdt +\beta_2\int_Q(\vphi^\ast-\vphi_Q)\psi\, dxdt +\beta_3\io(\vphi^\ast(T)-\vphi_T)\psi(T)\, dx +\beta_4\int_Q\bg^\ast\cdot\bu\, dx dt\right)\no\\
       &+\int_Q\left[-\eta\Delta\bw+\lm'(\vphi^\ast)\psi\bv^\ast+\lm(\vphi^\ast)\bw-\tht\nabla\vphi^\ast-\mu^\ast\nabla\psi\right]\cdot\bv^a\no\\&+\int_Q\left[ \pat\psi+\bw\cdot\nabla\vphi^\ast+\bv^\ast\cdot\nabla\psi-\Delta\tht+\sigma\psi-h'(\vphi^\ast)\psi\right]\vphi^a+\no\\&+\int_Q\left[ -\tht-\Delta\xi+f''(\vphi^\ast)\psi w^\ast+f'(\vphi^\ast)\xi+\nu\xi\right]\mu^a+\int_Q\left[-\xi-\Delta\psi+f'(\vphi^\ast)\psi \right]w^a=0, \qquad,
    \end{align}
  for every $(\ell, \bu)\in X_1$,   and 
    \begin{align}\label{const1}
        \lm_1\left(\int_Q|\bg^\ast|^2 dx dt-M^2\right)=0,
    \end{align}
    for some $ \lm_1\geq 0$ and $\lm_0\neq 0.$
\end{theorem}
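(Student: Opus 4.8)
The statement is an instance of the abstract Lagrange principle of Theorem \ref{ALMP}, applied to the reformulation \eqref{ref_CP} with the data $X_1,X_2,f,G$ fixed above and with the scalar inequality constraint $g(\bg):=\|\bg\|_{L^2(0,T;\MH)}^2-M^2\le 0$. The plan is to verify the hypotheses of Theorem \ref{ALMP} at the optimal pair $(\vk^\ast,\bg^\ast)$ supplied by Theorem \ref{exist-op-control}, invoke the principle, and then unfold the stationarity identity $\mathcal{L}_{(\vk,\bg)}(\vk^\ast,\bg^\ast,\lm_0,\lm_1,\wp)=0$ into \eqref{LP}; the complementary slackness \eqref{const1} is then exactly the conclusion $\lm_1 g(\bg^\ast)=0$ of the principle.

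First I would record that $(\vk^\ast,\bg^\ast)$ solves \eqref{ref_CP}: since $\kappa=0$ the reduced functional equals $\widetilde{J}$, so Theorem \ref{exist-op-control} produces a minimizer, and by definition of $\ms$ the pair $(\ms(\bg^\ast),\bg^\ast)$ lies in $\W_{ad}$ and attains the infimum. The functionals $f=J$ and $g$ are quadratic, hence $C^\infty$, with $f'(\vk^\ast,\bg^\ast)$ the obvious linear combination of the $\beta_i$-terms (the $\beta_3$-part producing the endpoint term $\psi(T)$, which is meaningful since $\psi\in C^0([0,T];V')$, paired with $\vphi^\ast(T)-\vphi_T\in H$) and $g'(\bg^\ast)(\bu)=2\int_Q\bg^\ast\cdot\bu$. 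For the continuous Fr\'echet differentiability of $G$ I would argue componentwise: up to the superposition terms $\lm(\vphi)\bv$, $h(\vphi)$, $f'(\vphi)w$, $f(\vphi)$ each entry of $G$ is affine in $(\bv,\mu,w,\bg)$; using (A2)--(A5), the $L^\infty(Q)$-bound on $\vphi$ from \eqref{unif_est_weak sol}, the embeddings \eqref{sobolev_ineq}--\eqref{GN-H3}, and the stability estimate \eqref{cont-depend-est}, these Nemytskii operators are of class $C^1$ into $X_2$ with derivative the linearized expression \eqref{der-G}, and continuity of $(\vk,\bg)\mapsto G'(\vk,\bg)$ follows from the Lipschitz continuity on bounded sets of $\lm',\lm,h',f',f''$.

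The decisive step, and the main obstacle, is that $G'(\vk^\ast,\bg^\ast):X_1\to X_2$ must be an epimorphism; this yields both closedness of its range (needed in Theorem \ref{ALMP}) and normality $\lm_0\ne 0$. I would prove surjectivity by solving \eqref{epi-equ}: written variationally, \eqref{1st-equ}--\eqref{last equ} is structurally the linearized system \eqref{lin-system}--\eqref{lin-in-bdry} treated in Proposition \ref{exist_lin_sys}, the only novelty being the prescribed data $\bg_1\in L^2(0,T;\MV_{\td}')$, $g_3\in L^2(0,T;H)$, $g_2,g_4\in L^2(0,T;W')$ on the right-hand sides. Since these live in the same spaces as (or weaker spaces than) the term $\bu$ already present there, they are absorbed into the a priori estimates of Proposition \ref{exist_lin_sys} by H\"older's and Young's inequalities, so the identical Galerkin--energy--compactness argument yields a solution $(\ell,\bu)\in X_1$ (one may, for instance, take $\bu=0$ and let the $g_i$ act as source terms), along with the bound \eqref{lin-energy}. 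Hence $G'(\vk^\ast,\bg^\ast)$ is onto. As for $\lm_0\ne 0$: if the constraint $g(\bg^\ast)\le 0$ is inactive, then $\lm_1=0$ and Theorem \ref{ALMP} applies with the inequality absent; if it is active, surjectivity of $G'$ still furnishes an element of $\ker G'(\vk^\ast,\bg^\ast)$ along which $g'(\bg^\ast)$ is strictly negative (take the linearized state driven by $\bu=-\bg^\ast$), so $\lm_0\ne 0$ by the usual constraint-qualification argument.

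Having verified all hypotheses, Theorem \ref{ALMP} delivers $\wp=(\bv^a,\vphi^a,\mu^a,w^a)\in X_2'$ and $\lm_0,\lm_1\ge 0$, not all zero and with $\lm_0\ne 0$, such that $\lm_1(\|\bg^\ast\|_{L^2(0,T;\MH)}^2-M^2)=0$ --- which is \eqref{const1} --- and $\mathcal{L}_{(\vk,\bg)}(\vk^\ast,\bg^\ast,\lm_0,\lm_1,\wp)=0$. To conclude I would differentiate the Lagrangian \eqref{Lf} in an arbitrary direction $(\ell,\bu)=(\bw,\psi,\tht,\xi,\bu)\in X_1$, insert the expression \eqref{der-G} for $G'$ together with the derivative of $f$ computed above, and collect terms; this reproduces \eqref{LP}. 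Everything after the epimorphism step is routine bookkeeping.
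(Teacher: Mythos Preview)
Your overall strategy matches the paper's: cast the problem as \eqref{ref_CP}, verify that $f$, $g$, $G$ are $C^1$, establish that $G'(\vk^\ast,\bg^\ast)$ is an epimorphism by solving \eqref{1st-equ}--\eqref{last equ} via the Galerkin argument of Proposition \ref{exist_lin_sys}, and then invoke Theorem \ref{ALMP} to obtain \eqref{LP} and \eqref{const1}.

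The genuine difference is in how you secure $\lm_0\ne 0$. You argue by constraint qualification: if the ball constraint is inactive, drop it and re-apply the principle; if active, exhibit $(\ell,-\bg^\ast)\in\ker G'(\vk^\ast,\bg^\ast)$ (with $\ell$ the linearized state driven by $-\bg^\ast$) so that $g'(\bg^\ast)(-\bg^\ast)=-2M^2<0$, whence $\lm_0=0$ would force $\lm_1=0$ via the stationarity equality and then $\wp=0$ by injectivity of $(G')^\ast$, contradicting nontriviality of the multiplier. The paper instead follows Fursikov's device: assuming $\lm_0=0$, it reads off from \eqref{LP} the adjoint system \eqref{adj sys} with vanishing data, shows by energy estimates and Gronwall that $\wp=0$, deduces $\lm_1>0$ and hence that the constraint is active, and then \emph{reformulates} the problem with $\|\bg\|_{L^2(0,T;\MH)}^2=M^2$ absorbed as an additional equality constraint; the augmented operator $\tilde G'$ is shown to be onto (this is where $\bg^\ast\ne 0$ is used to hit the $\rea$-component), and Theorem \ref{ALMP}, now applied with no inequality constraint present, yields $\lm_0\ne 0$. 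Your route is shorter and purely functional-analytic, but it implicitly relies on a Lagrange principle with a Mangasarian--Fromovitz-type qualification that is stronger than the version recorded in Theorem \ref{ALMP} (which only asserts normality when the inequality is absent); the paper's route is longer and requires the adjoint-uniqueness step, but stays strictly within the stated Theorem \ref{ALMP}.
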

\begin{proof}
The identity \eqref{LP} follows from expanding \eqref{derivative_LP} by means of the chain rule. 
To complete the proof of the theorem, it remains to show that $\lambda_0 \neq 0$. 
To this end, we follow the approach of \cite{Fursikov_bndry}. 
By applying integration by parts in \eqref{LP} and varying $(\ell, \bu) \in X_1$, we obtain the optimality system, which coincides with the adjoint system, as follows:
    \begin{equation}\label{adj sys}
        \left\{
        \begin{aligned}
        &-\eta\Delta\bv^a+\lm(\vphi^\ast)\bv^a+\vphi^a\nabla\vphi^\ast=-\lm_0\beta_1(\bv^\ast-\bv_Q) \text{ in } Q,\\
        &-\pat\vphi^a+\lm'(\vphi^\ast)\bv^\ast\bv^a+\bv^a\cdot\nabla\mu^\ast-\bv^\ast\cdot\nabla\vphi^a+\sigma\vphi^a-h'(\vphi^\ast)\vphi^a
        +f''(\vphi^\ast)w^\ast\mu^a-\Delta w^a\\&\qquad+f'(\vphi^\ast)w^a=-\lm_0\beta_2(\vphi^\ast-\vphi_Q) \text{ in } Q,\\
        &\mu^a=-\Delta\vphi^a-\bv^a\cdot\nabla\vphi^\ast \text{ in } Q,\\
        & w^a=-\Delta\mu^a+f'(\vphi^\ast)\mu^a+\nu\mu^a \text{ in } Q,\\
       & \td{\bv}^a=0\text{ in } Q,\\
       & \bv^a=0, \pan\vphi^a=\pan\mu^a=\pan w^a=0 \text{ on }\Sigma,\\
       &\vphi^a(T)=-\lm_0\beta_3(\vphi^\ast(T)-\vphi_T) \text{ in } \Omega. 
        \end{aligned}
        \right.
    \end{equation} 
    If $\lm_0=0$, using standard energy estimates and Gronwall inequality we can show that the system \eqref{adj sys} has zero solution. Then we want $\lm_1\neq 0$, otherwise $(\wp, \lm_0, \lm_1)=(\mathbf{0}, 0, 0)$, which will contradict Theorem \ref{ALMP}, thus $\lm_1>0.$ By virtue of \eqref{const1}, we consider the following modified minimization problem: 
    $$\text{minimize $J$ subject to equality constraint \eqref{CHB} and \eqref{const1}}$$. Now we show that this modified minimization problem also satisfied the condition of Theorem \ref{ALMP}. We set $X_1$ is same as before and $\tilde{X}_2$ as $X_2\times\rea$. Now define the mappings  \begin{equation}
\left\{
 \begin{aligned}
&f( \vk, \bg)=J(\vk, \bg),\\
&\tilde{G}\begin{pmatrix}
    \vk\\\bg
\end{pmatrix}=\begin{pmatrix}
    \mathbb{P}[- \eta\Delta\bv+ \lm(\vphi)\bv-\mu\nabla\varphi-\bg]\\
    \pat\vphi +\bv\cdot \nabla \vphi-\Delta\mu+\sigma\vphi-h(\vphi)\\ -\Delta w+f'(\vphi)w+\nu w-\mu\\ -\Delta\vphi+f(\vphi)-w\\
    \int_Q|\bg|^2 \, dx dt-M^2
\end{pmatrix},
\end{aligned}
 \right.
 \end{equation}
  where $J$ is same as in \eqref{cost functional}. Then $\tilde{G}'
      (\vk^\ast, \bg^\ast):X_1\to X_2$  define by
  \begin{align}
    \tilde{G}'\begin{pmatrix}
        \vk^\ast\\ \bg^\ast
    \end{pmatrix}\begin{pmatrix}
        \ell\\\bu 
    \end{pmatrix}=\begin{pmatrix}
   \mathbb{P}(-\eta\Delta\bw+\lm'(\vphi^\ast)\psi\bv^\ast+\lm(\vphi^\ast)\bw-\tht\nabla\vphi^\ast-\mu^\ast\nabla\psi-\bu)\\
   \pat\psi+\bw\cdot\nabla\vphi^\ast+\bv^\ast\cdot\nabla\psi-\Delta\tht+\sigma\psi-h'(\vphi^\ast)\psi \\
      -\Delta\xi+f''(\vphi^\ast)\psi w^\ast+f'(\vphi^\ast)\xi+\nu\xi-\tht\\
      -\Delta\psi+f'(\vphi^\ast)\psi-\xi\\
      \int_Q\bg^\ast\cdot\bu \, dx dt,
  \end{pmatrix} 
  \end{align}
for every $(\ell, \bu)\in X_1$.  To show $\tilde{G}'((\vk^\ast, \bg^\ast)^t)$ is an epimorphism, we first observe that the operator is continuous. Next we need to show for each $(\bg_1, g_2, g_3, g_4, \zeta)\in \tilde{X}_2$, there exists $(\ell, \bu)\in X_1$ that satisfy 
  \begin{align}\label{last_const}
     \tilde{G}'\begin{pmatrix}
        \vk^\ast\\ \bg^\ast
    \end{pmatrix}\begin{pmatrix}
        \ell\\\bu 
    \end{pmatrix}=(\bg_1, g_2, g_3, g_4)^t, \quad \int_Q\bg^\ast\cdot\bu \, dx dt=\zeta.
  \end{align}
  We have already seen that first expression of \eqref{last_const} has a solution say $\tilde{\ell}\in \W$. It is sufficient to show that there exists a $\bu \in \U$ for which the left-hand side of \eqref{last_const} is not equal to zero, for then we can obtain second equality of \eqref{last_const} by multiplying $\bu$ by a suitable constant. Suppose that for every $\bu \in \U$
$$ \int_Q \bg^\ast \cdot \bu \, dx\, dt = 0 $$
holds, which implies that $\bg^\ast(x,t)=0$ for a.e. $(x,t)\in Q$, contradicting \eqref{const1}. Hence $(\tilde{\ell}, \tilde{\bu})\in X_1$ satisfy \eqref{last_const} implying $\tilde{G}'((\vk^\ast, \bg^\ast)^t)$ is an epimorphism.
By virtue of Theorem \ref{ALMP}, there exist Lagrange multipliers triplet $(\tilde{\wp}, \tilde{\lm}_0, \tilde{\lm}_1)$ with $\tilde{\wp}\in \tilde{X}'_2,$  $\tilde{\lm}_0 \neq 0$ and $\tilde{\lm}_1\geq 0 $ such that \eqref{derivative_LP} holds, where $\mathcal{L}$ is defined in \eqref{Lf}. In particular, $(\wp, \lm_0, \lm_1)$ is such a triplet, which contradicts the assumption $\lm_0 = 0$, hence $\lm_0 \neq 0$.
\end{proof}
Now onwards we will consider $\lm_0=1$ in the adjoint system \eqref{adj sys}.
\begin{remark}
    We note that if $(\bv^a, \vphi^a, \mu^a, w^a)$ satisfy the system \eqref{adj sys} with the above mention regularity, particularly, $\bv^a\in L^2(0, T; \MV_{\td})$, then by means of De Rham's lemma (cf. \cite{Temam}) we can recover the pressure term in the first equation of \eqref{adj sys}. In particular, there exist $p^a\in L^2(0, T; H)$ such that 
    \begin{align}
        -\eta\Delta\bv^a+\lm(\vphi^\ast)\bv^a+\vphi^a\nabla\vphi^\ast-\nabla p^a=-\beta_1(\bv^\ast-\bv_Q) \text{ in } Q,
    \end{align}
    holds in the sense of distribution.
\end{remark}
 With the adjoint system \eqref{adj sys} in hand paves the way to characterized the optimal control and deriving the first order necessary optimality condition.
 \subsection{First order necessary optimality condition for the case $\kappa=0$}
 By virtue of Fr\'echet differentiability given in Theorem \ref{thm-f-diff} and quadratic structure of the cost functional $J$ given in \eqref{cost functional}, we can apply chain rule to the composition map
 $$\U\to \W\to\rea \text{ given by }\bg\mapsto (\bv, \vphi, \bg):=(\ms(\bg), \bg)\mapsto J(\ms(\bg), \bg):=\tilde{J}(\bg).$$  By chain rule, we can write
 \begin{align}
D\tilde{J}(\bg)= DJ(S(\bg), \bg) = J'_{\ms(\bg)}(\ms(\bg), \bg) \circ \ms'(\bg) + J'_{\bg}(\ms(\bg), \bg)\circ \mathrm{Id}.\label{chain rule}
  \end{align}
  In the above equation \eqref{chain rule}, we have for a fixed $\bg \in \mathcal{U}$, the Gateaux derivative of $J(\ms(\bg), \bg)$ with respect to $\mathcal{S}(\bg) = (\bv,\varphi)$ and $\bg$ are denoted by $J'_{\mathcal{S}(\bg)}$ and $J'_{\bg}$, respectively. The Gateaux derivative $J'_{\mathcal{S}(\bg)}$ at $((\bv^\ast, \varphi^\ast), \bg^\ast)$ in the direction of  $(\by_1, y_2)$ is given by
   \begin{align}
       J'_{\mathcal{S}(\bg)}(\ms(\bg^\ast), \bg^\ast)(\by_1, y_2) = &\beta_1\int_{Q}(\bv^\ast - \bv_{Q})\cdot\by_1 dx dt + \beta_2\int_{Q}(\varphi^\ast - \varphi_{Q})y_2 dxdt \no \\ & +\beta_3 \int_{\Omega}(\varphi(T) - \varphi_{\Omega}) y_2(T) dx,\label{1sr part chain}
   \end{align}
   for any $(\by_1, y_2) \in \mathcal{W}$.
   Similarly, we calculate the Gateaux derivative  $J'_{\bg}$ at $((\bv^\ast, \varphi^\ast), \bg^\ast)$ in the direction of $\bu \in \mathcal{U}$ as, 
   \begin{align}
       J'_{\bg}(\mathcal{S}(\bg^\ast), \bg^\ast)(\bu) = \beta_4\int_Q\bg^\ast\cdot\bu \, dxdt,\label{2nd part chain}
   \end{align}
    Using Theorem \ref{thm-f-diff}, we can write  that
    \begin{align}\label{diff  c to s}
    \ms'(\bg^\ast)(\bg-\bg^\ast)=(\bw, \psi).\end{align}
     where $ (\bw, \psi) $ is the unique weak solution of linearized system \eqref{lin-system}, linearized around $ (\bv^\ast, \varphi^\ast,\mu^\ast, w^\ast)$, with control $\bg-\bg^\ast$.
   Now using \eqref{1sr part chain}-\eqref{diff c to s} in \eqref{chain rule} we obtain,
   \begin{align}\label{gd j}
   D\tilde{J}(\bg^\ast)(\bg - \bg^\ast) = &\beta_1\int_{Q}(\bv^\ast - \bv_{Q})\cdot \bw \, dxdt + \beta_2\int_{Q}(\varphi^\ast - \varphi_{Q})\cdot \psi \, dxdt \no \\ &  \quad+ \beta_3\int_{\Omega} (\varphi^\ast(T) - \varphi_{\Omega})\cdot \psi(T) \, dx + \beta_4\int_Q \bg^\ast(\bg - \bg^\ast). 
   \end{align}
 Since $\mathcal{U}_{ad}$ is a nonempty, convex subset of $\mathcal{U}$ and $\tilde{J}$ is G$\hat{\text{a}}$teaux differentiable, then from \cite[Lemma 2.21]{fredi_book}, we have 
 \begin{align}
      \beta_1\int_{Q}&(\bv^\ast - \bv_{Q})\cdot \bw \, dxdt +\beta_2 \int_{Q}(\varphi^\ast - \varphi_{Q})\cdot \psi \, dxdt +\beta_3 \int_{\Omega} (\varphi^\ast(T) - \varphi_{\Omega})\cdot \psi(T) \, dx \no\\&+\beta_4 \int_Q \bg^\ast(\bg - \bg^\ast)\, dx dt  \geq 0 \label{necessary condition}, 
 \end{align}
 for any $\bg\in \U_{ad},$ where $(\bv^\ast, \vphi^\ast, \bg^\ast)$ is as refer in Definition \ref{opt_sol}. 
 The above inequality is very unpleasant as it requires solving linearized system infinitely many times as $\bg$ varies over $\U_{ad}.$ To overcome this situation, we simplify the inequality \eqref{necessary condition} using adjoint variable as follows:  
 \begin{theorem}
     Let $(\bv^\ast, \vphi^\ast, \bg^\ast)$ be the optimal solution as defined in Definition \ref{opt_sol} (note $(\bv^\ast, \vphi^\ast)$ first two component of $\ms(\bg^\ast)$). Then 
     \begin{align}\label{var-equ}
         \int_Q(\beta_4\bg^\ast-\bv^a)\cdot(\bg-\bg^\ast)\geq 0 \quad \text{ for every } \bg\in\U_{ad},
     \end{align}
     where $\bv^a$ is the first component of solution of adjoint system \eqref{adj sys}. Moreover, since $\beta_4>0$ the optimal control $\bg^\ast$ is the $\U$-orthogonal projection of $\frac{1}{\beta_4}\bv^a$ onto $\U_{ad}.$
 \end{theorem}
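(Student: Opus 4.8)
The plan is to eliminate the linearized variables $(\bw,\psi)$ from the variational inequality \eqref{necessary condition} by introducing the adjoint state $\wp=(\bv^a,\vphi^a,\mu^a,w^a)$ that solves \eqref{adj sys} with $\lm_0=1$. Recall (as established just above the statement) that in \eqref{necessary condition} the pair $(\bw,\psi)$ consists of the first two components of the solution $(\bw,\psi,\tht,\xi)$ of the linearized system \eqref{lin-system}--\eqref{lin-in-bdry} around $(\bv^\ast,\vphi^\ast,\mu^\ast,w^\ast)$ with control $\bu=\bg-\bg^\ast$, that is $\ms'(\bg^\ast)(\bg-\bg^\ast)=(\bw,\psi)$ by Theorem \ref{thm-f-diff}. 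The core of the proof is the adjoint identity
\begin{align}\label{eq-adj-duality}
\beta_1\int_Q(\bv^\ast-\bv_Q)\cdot\bw\, dxdt+\beta_2\int_Q(\vphi^\ast-\vphi_Q)\psi\, dxdt+\beta_3\io(\vphi^\ast(T)-\vphi_T)\psi(T)\, dx=-\int_Q\bv^a\cdot(\bg-\bg^\ast)\, dxdt .
\end{align}
Granting \eqref{eq-adj-duality}, substituting it into \eqref{necessary condition} turns that inequality into $\int_Q(\beta_4\bg^\ast-\bv^a)\cdot(\bg-\bg^\ast)\, dxdt\ge 0$ for every $\bg\in\U_{ad}$, which is precisely \eqref{var-equ}.

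To prove \eqref{eq-adj-duality} I would pair the two systems against each other: test the four equations of \eqref{adj sys} by the matching linearized variables (the $\bv^a$-equation by $\bw$, the $\vphi^a$-equation by $\psi$, the $\mu^a$-equation by $\tht$, the $w^a$-equation by $\xi$), test the equations of \eqref{lin-system} by the matching adjoint variables, integrate over $(0,T)$, and subtract the two families of identities. Since \eqref{adj sys} is, by construction, the formal adjoint of the linearized system, all interior bilinear contributions cancel in pairs (possibly across equations): the Stokes terms $\eta\int\nabla\bw:\nabla\bv^a$, the reaction terms $\int\lm(\vphi^\ast)\bw\cdot\bv^a$, the coupling terms $\int\lm'(\vphi^\ast)(\bv^\ast\cdot\bv^a)\psi$, $\int\tht\nabla\vphi^\ast\cdot\bv^a$, $\int\mu^\ast\nabla\psi\cdot\bv^a$, $\int(\vphi^a\nabla\vphi^\ast)\cdot\bw$, the transport pair $\int(\bv^\ast\cdot\nabla\psi)\vphi^a+\int(\bv^\ast\cdot\nabla\vphi^a)\psi=0$ (using $\td\bv^\ast=0$ and $\bv^\ast|_\Sigma=0$), the terms $\int(\bw\cdot\nabla\vphi^\ast)\vphi^a$ and $\int(\bv^a\cdot\nabla\mu^\ast)\psi$, the $\sigma$- and $h'(\vphi^\ast)$-terms, and the higher-order terms built from $f'(\vphi^\ast),f''(\vphi^\ast),w,w^\ast$ and the iterated Laplacians, where one integrates by parts twice and uses the Neumann conditions $\pan\psi=\pan\Delta\psi=\pan\tht=0$ from \eqref{lin-in-bdry} and $\pan\vphi^a=\pan\mu^a=\pan w^a=0$ from \eqref{adj sys} to discard every boundary integral. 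Exactly three contributions survive: the forcing $\bu=\bg-\bg^\ast$ of \eqref{lin-system} tested against $\bv^a$ (the pressure gradient drops out since $\td\bv^a=0$ and $\bv^a|_\Sigma=0$), the right-hand sides $-\beta_1(\bv^\ast-\bv_Q)$ and $-\beta_2(\vphi^\ast-\vphi_Q)$ of \eqref{adj sys} tested against $\bw$ and $\psi$, and the time-boundary term produced by the pair $\int_Q\pat\psi\,\vphi^a\, dxdt+\int_Q\pat\vphi^a\,\psi\, dxdt=\io\psi(T)\vphi^a(T)\, dx-\io\psi(0)\vphi^a(0)\, dx$, which equals $-\beta_3\io(\vphi^\ast(T)-\vphi_T)\psi(T)\, dx$ after inserting $\psi(0)=0$ and $\vphi^a(T)=-\beta_3(\vphi^\ast(T)-\vphi_T)$. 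Rearranging the three surviving terms yields \eqref{eq-adj-duality}.

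For the final assertion, note that $\U=L^2(0,T;\MH)$ is a Hilbert space with inner product $(\bh,\bh')_{\U}=\int_Q\bh\cdot\bh'\, dxdt$, that $\U_{ad}$ is a nonempty, convex, closed subset, and that $\bg^\ast\in\U_{ad}$ by Theorem \ref{exist-op-control}. Dividing \eqref{var-equ} by $\beta_4>0$ gives $\bigl(\bg^\ast-\tfrac{1}{\beta_4}\bv^a,\ \bg-\bg^\ast\bigr)_{\U}\ge 0$ for all $\bg\in\U_{ad}$, which is the variational characterization of the metric projection of $\tfrac{1}{\beta_4}\bv^a$ onto the closed convex set $\U_{ad}$ in a Hilbert space (see, e.g., \cite{fredi_book}); hence $\bg^\ast=P_{\U_{ad}}\bigl(\tfrac{1}{\beta_4}\bv^a\bigr)$.

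The main obstacle is the bookkeeping of the second paragraph: checking that each interior term generated by the two families of pairings cancels with the correct sign, that the iterated integrations by parts in the sixth-order terms leave no boundary residue (this is exactly where the full list of Neumann conditions is used), and that the whole computation is legitimate at the available level of regularity. In particular one needs the adjoint system \eqref{adj sys} to admit a solution with $\bv^a\in L^2(0,T;\MV_{\td})$ and $\pat\vphi^a$ in a suitable dual space so that the time integration by parts against $\psi\in C^0([0,T];V')\cap H^1(0,T;W')$ is meaningful; should that regularity be delicate, one can instead perform the pairing at the Galerkin level of Proposition \ref{exist_lin_sys} and pass to the limit, or argue by density.
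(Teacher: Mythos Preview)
Your proposal is correct and follows essentially the same route as the paper: test the linearized equations by the adjoint variables and the adjoint equations by the linearized variables, integrate over $(0,T)$, subtract, cancel the interior bilinear terms, and insert the surviving identity into \eqref{necessary condition}; the projection statement is then read off exactly as you do, via the variational characterization of the metric projection onto the closed convex set $\U_{ad}$.
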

 \begin{proof}
 We fix $\bg \in \U_{ad}$ and consider the linearized system \eqref{lin-system}, which is linearized around the optimal control $\bg^\ast$ and the corresponding state $(\bv^\ast, \vphi^\ast, \mu^\ast, w^\ast)$, with variation $\bu = \bg - \bg^\ast$. 
We then test these four equations by the adjoint variables $\bv^a, \vphi^a, \mu^a, w^a$, respectively. 
At the same time, we test the adjoint equations with the linearized variables $\bw, \psi, \tht, \xi$, respectively.  
Using integration by parts, we obtain the following identities for a.e.\ $t \in (0, T)$.
   \begin{equation}\label{int by part1}
   \left\{ 
   \begin{aligned}
     &\eta\int_\Omega\nabla\bw :\nabla\bv^a dx+\int_\Omega\lm'(\vphi^\ast)\psi\bv^\ast\cdot\bv^a dx +\int_\Omega\lm(\vphi^\ast)\bw\cdot\bv^a \dx=\int_\Omega\tht\nabla\vphi^\ast\cdot\bv^a dx+\int_\Omega\mu^\ast\nabla\psi\cdot\bv^a dx\\&\qquad+\int_\Omega(\bg-\bg^\ast)\cdot\bv^a dx,\\
     &\int_\Omega\pat\psi\vphi^a dx+\int_\Omega (\bw\cdot\nabla\vphi^\ast)\vphi^a dx+\int_\Omega(\bv^\ast\cdot\nabla\psi)\cdot\vphi^a dx +\int_\Omega\nabla\tht\cdot\nabla\vphi^a dx=-\int_\Omega\sigma\psi\vphi^a dx\\&\qquad+\int_\Omega h'(\vphi^\ast)\psi\vphi^a dx,\\
     &-\int_\Omega\Delta\xi\mu^a dx +\int_\Omega f''(\vphi^\ast)\psi w^\ast\mu^a dx+\int_\Omega f'(\vphi^\ast)\xi\mu^a dx+\nu\int_\Omega\xi\mu^a dx=\int_\Omega\tht\mu^a dx,\\
     &-\int_\Omega\Delta\psi w^a dx+\int_\Omega f'(\vphi^\ast)\psi w^a dx=\int_\Omega\xi w^a dx,
     \end{aligned}\right. \end{equation}for every $(\bv^a, \psi^a, \mu^a, w^a)\in \MV_{\td}\times W\times H\times W$, 
     as well as
     \begin{equation}\label{int by part2}
     \left\{
     \begin{aligned}
        &\eta\int_\Omega\nabla\bv^a:\nabla\bw +\io\lm(\vphi^\ast)\bv^a\cdot\bw dx+\io\vphi^a\nabla\vphi^\ast\cdot\bw dx=-\beta_1\io(\bv-\bv_Q)\cdot\bw dx,\\
       &\io \vphi^a\pat\psi dx+ \io\lm'(\vphi^\ast)\bv^\ast\bv^a\psi dx+\io(\bv^a\cdot\nabla\mu^\ast)\psi dx-\io(\bv^\ast\cdot\nabla\vphi^a)\psi dx+\io\sigma\vphi^a\psi dx\\&-\io h'(\vphi^\ast)\vphi^a\psi dx+\io f''(\vphi^\ast)w^\ast\mu^a\psi dx-\io\Delta\psi w^a dx+\io f'(\vphi^\ast)w^a\psi dx =-\io\beta_2(\vphi-\vphi_Q)\psi dx\\& -\beta_3\io(\vphi^\ast(T)-\vphi_T)\psi(T),\\
       &\io\nabla\tht\cdot\nabla\vphi^a dx-\io(\bv^a\cdot\nabla\vphi^\ast)\tht dx =\io\mu^a\tht dx,\\
        & -\io\Delta\xi\mu^a dx+\io f'(\vphi^\ast)\mu^a\xi dx+\nu\io\mu^a\xi dx= \io w^a\xi dx, 
     \end{aligned}
     \right.\end{equation}
     for every $(\bw, \psi, \tht, \xi)\in \MV_{\td}\times W\times V\times W.$ 
     Note that using Theorem \ref{exist_LM} we can take  $\lm_0\equiv 1$ in \eqref{adj sys}. At this point, we subtract each equation of \eqref{int by part1} from \eqref{int by part2} and integrate over $(0, T)$. Then integrating by parts and after some obvious cancellation and using
  \eqref{necessary condition} yields
  $$\int_Q(\beta_4\bg^\ast-\bv^a)\cdot(\bg-\bg^\ast)\geq 0.$$ The projection formula in the last part of the theorem is a direct consequence of \eqref{var-equ} [see \cite[Theorem 2.33]{fredi_book}].
 \end{proof}
 \subsection{First order necessary optimality condition for $\kappa>0$.} We have prove the existence of an optimal control $\bg^\ast$ in Theorem \ref{exist-op-control} for the problem $\textbf{(CP)}_\eta$. In the sequel, we will assume $\bg^\ast$ is a local solution of $\textbf{(CP)}_\eta$. Now we recall an abstract minimization problem from \cite[Section 3]{CT20}.
 
 Let \(Z\) and \(Y\) be two topological vector spaces and \(\mathcal{K} \subset Z\) and 
\(\mathcal{C} \subset Y\) two convex sets. Given the mappings 
\(G : Z \longrightarrow Y\), \(J : Z \longrightarrow \mathbb{R}\) and 
\(j : Z \longrightarrow (-\infty,+\infty]\), we consider the optimization problem
\[
\text{(Q)} \qquad \min\{ J(z) + j(z) : z \in \mathcal{K} \text{ and } G(z) \in \mathcal{C} \}.
\]

Then the following theorem provides the optimality conditions satisfied by any local solution
of (Q).

\begin{theorem}\cite[Theorem 3.2]{CT20}\label{mcrf}
Let \(\bar{z}\) be a local solution of (Q). Assume that \(J\) and \(G\) are 
Gâteaux differentiable at \(\bar{z}\), \(j\) is convex and continuous at some point of \(\mathcal{K}\), and \(\operatorname{int} \mathcal{C} \neq \emptyset\). Then there exist a real number 
\(\lm_0 \ge 0\), a multiplier \(\wp \in Y'\), and \(\bar{\lambda} \in \partial j(\bar{z})\) 
such that
\begin{align}
&(\lm_0, \wp) \neq (0,0), \\
&\langle \wp,\, y - G(\bar{z}) \rangle_{Y',Y} \le 0 \qquad \forall\, y \in \mathcal{C}, \label{2nd} \\
&\langle \lm_0[J'(\bar{z}) + \bar{\lambda}] + [DG(\bar{z})]^*\wp,\, 
z - \bar{z} \rangle_{Z',Z} \ge 0 \qquad \forall\, z \in \mathcal{K}.\label{3rd} 
\end{align}

Moreover, if the linearized Slater condition
\begin{equation}
\exists z_0 \in \mathcal{K} : G(\bar{z}) + DG(\bar{z})(z_0 - \bar{z}) \in \operatorname{int}\mathcal{C} 
\end{equation}
is satisfied, then \((3.3)\) holds with $\lm_0 = 1$.
\end{theorem}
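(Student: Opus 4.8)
This is the abstract multiplier rule of \cite{CT20}; I would prove it by a Hahn--Banach separation in $\mathbb R\times Y$ followed by a sum--rule step that isolates the subgradient $\bar\lambda$. The first move is to pass from the \emph{local} optimality of $\bar z$ to a \emph{linearized} optimality statement: there is no $z\in\mathcal K$ with both $J'(\bar z)(z-\bar z)+j(z)-j(\bar z)<0$ and $G(\bar z)+DG(\bar z)(z-\bar z)\in\operatorname{int}\mathcal C$. Since $\mathcal K$ is convex, the segment $z_t:=\bar z+t(z-\bar z)\in\mathcal K$ is admissible for $t\in(0,1]$; Gâteaux differentiability of $G$ gives $G(z_t)=(1-t)G(\bar z)+t\big(\tilde y+o(1)\big)$ with $\tilde y:=G(\bar z)+DG(\bar z)(z-\bar z)$, so if $\tilde y\in\operatorname{int}\mathcal C$ then, $\operatorname{int}\mathcal C$ being open and convex and $G(\bar z)\in\mathcal C$, the right-hand side lies in $\mathcal C$ for all sufficiently small $t>0$; hence $z_t$ is feasible. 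Then $J(z_t)+j(z_t)\ge J(\bar z)+j(\bar z)$ for small $t$; expanding $J$, using $j(z_t)\le(1-t)j(\bar z)+tj(z)$, dividing by $t$ and letting $t\downarrow0$ yields $J'(\bar z)(z-\bar z)+j(z)-j(\bar z)\ge0$, which is the claim.

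Next I would set up the separation. Let $S_1:=\{(J'(\bar z)(z-\bar z)+j(z)-j(\bar z)+t,\ G(\bar z)+DG(\bar z)(z-\bar z)):z\in\mathcal K,\ t\ge0\}$, which is convex because $J'(\bar z)$ and $DG(\bar z)$ are linear and $j$ is convex, and $S_2:=(-\infty,0)\times\operatorname{int}\mathcal C$, which is open, convex and (since $\operatorname{int}\mathcal C\neq\emptyset$) nonempty. The previous step says exactly $S_1\cap S_2=\emptyset$, so the geometric Hahn--Banach theorem supplies $(\lm_0,\wp)\in(\mathbb R\times Y')\setminus\{(0,0)\}$ and $\gamma\in\mathbb R$ with $\lm_0\rho+\langle\wp,y\rangle<\gamma$ on $S_2$ and $\ge\gamma$ on $S_1$. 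Letting $t\to+\infty$ in $S_1$ forces $\lm_0\ge0$; taking $z=\bar z$, $t=0$ in $S_1$ and $\rho\uparrow0$ in $S_2$, and using $G(\bar z)\in\mathcal C$ (feasibility), pins down $\gamma=\langle\wp,G(\bar z)\rangle$ and yields \eqref{2nd}; and letting $t\downarrow0$ in $S_1$ gives
\[
\lm_0\big[J'(\bar z)(z-\bar z)+j(z)-j(\bar z)\big]+\big\langle[DG(\bar z)]^*\wp,\ z-\bar z\big\rangle\ \ge\ 0\qquad\text{for all }z\in\mathcal K .
\]

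The last inequality means that $\bar z$ minimizes over $Z$ the convex function $z\mapsto \lm_0 j(z)+I_{\mathcal K}(z)+\langle \lm_0 J'(\bar z)+[DG(\bar z)]^*\wp,\ z-\bar z\rangle$, where $I_{\mathcal K}$ is the indicator of $\mathcal K$, so $0$ lies in its subdifferential at $\bar z$. Because $j$ is convex and continuous at a point of $\mathcal K$, the Moreau--Rockafellar sum rule applies and produces $\bar\lambda\in\partial j(\bar z)$ together with an element of $N_{\mathcal K}(\bar z)$ whose sum with $\lm_0[J'(\bar z)+\bar\lambda]+[DG(\bar z)]^*\wp$ vanishes; this is precisely \eqref{3rd}. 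Finally, for the Slater case: if $\lm_0=0$ then $\wp\ne0$, and by \eqref{2nd} the nonzero functional $\langle\wp,\cdot\rangle$ is bounded above by $\langle\wp,G(\bar z)\rangle$ on $\mathcal C$, hence \emph{strictly} below it on $\operatorname{int}\mathcal C$; inserting the Slater point $z_0$ into \eqref{3rd} gives $\langle\wp,G(\bar z)+DG(\bar z)(z_0-\bar z)\rangle\ge\langle\wp,G(\bar z)\rangle$, contradicting $G(\bar z)+DG(\bar z)(z_0-\bar z)\in\operatorname{int}\mathcal C$; hence $\lm_0>0$ and one rescales $(\lm_0,\wp,\bar\lambda)$ to get $\lm_0=1$.

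I expect the main obstacle to be this last extraction step: the separation only controls the nonsmooth term $j$ along directions pointing into $\mathcal K$, and converting that into a genuine \emph{global} subgradient $\bar\lambda\in\partial j(\bar z)$ forces one through the Moreau--Rockafellar sum rule, which is exactly where the hypothesis that $j$ be continuous at a point of $\mathcal K$ enters (and where the degenerate alternative $\lm_0=0$, for which $\partial j(\bar z)$ must still be shown nonempty, has to be accommodated). Some care is also needed because $Z$ and $Y$ are merely topological vector spaces, so one must invoke the separation of an open convex set from a convex set rather than of two disjoint convex sets, and must keep track of the relations between $\mathcal C$, $\operatorname{int}\mathcal C$ and $\overline{\mathcal C}$.
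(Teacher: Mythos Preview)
The paper does not prove this theorem: it is quoted verbatim from \cite[Theorem~3.2]{CT20} and used as a black box to derive the optimality conditions in Section~4.4. There is therefore no ``paper's own proof'' against which to compare your proposal.

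That said, your sketch is the standard route for such a multiplier rule and is essentially correct. A few remarks. First, your linearization step tacitly assumes $j(z)<+\infty$ for the competitor $z$, which is harmless since points with $j(z)=+\infty$ cannot violate the claimed inequality. Second, the convexity of $S_1$ requires $\mathcal K\cap\operatorname{dom} j$ to be convex, which follows from the convexity of both sets; you should make this explicit. Third, your own caveat about the case $\lm_0=0$ is well placed: in that case \eqref{3rd} carries no information about $\bar\lambda$, so one needs $\partial j(\bar z)\neq\emptyset$ independently. Continuity of $j$ at \emph{some} point of $\mathcal K$ does not by itself guarantee this at $\bar z$; the cleanest fix is to note that the conclusion \eqref{3rd} with $\lm_0=0$ reads $\langle[DG(\bar z)]^*\wp,\,z-\bar z\rangle\ge0$ for all $z\in\mathcal K$, and then \emph{any} element of $\partial j(\bar z)$ (or, if empty, a reformulation that drops $\bar\lambda$ when $\lm_0=0$) suffices --- this is a known wrinkle in the statement and is handled in \cite{CT20} by the standing assumptions there. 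Finally, in the Slater argument you use that a nonzero continuous linear functional bounded above on a convex set with nonempty interior is strictly below its supremum on the interior; this is correct, but it is worth stating, since $Y$ is only assumed to be a topological vector space.
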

We will use the above theorem to derive first order necessary optimality condition for $\textbf{(CP)}_\eta$ for the case $\kappa>0.$
\begin{theorem}
    Let $(\bv^\ast, \vphi^\ast, \bg^\ast)$ be a local optimal solution of $\textbf{(CP)}_\eta$ and $j:L^1(Q)\to \rea$ is non-negative, convex and continuous. Then there exist $(\bv^a, \vphi^a, \mu^a, w^a)\in L^2(0, T; \MV_{\td})\times L^2(0, T; W)\times L^2(0, T; H)\times L^2(0, T; W)$ satisfying the adjoint system \eqref{adj sys} and $\varsigma\in\partial j(\bg^\ast)$ such that
     \begin{align}\label{2nd necessary cond}
         \int_Q(\beta_4\bg^\ast+\kappa\varsigma-\bv^a)\cdot(\bg-\bg^\ast)\geq 0 \quad \text{ for every } \bg\in\U_{ad},
     \end{align}
     where $\bv^a$ is the first component of adjoint system \eqref{adj sys}.
\end{theorem}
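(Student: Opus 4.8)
The plan is to recast $\textbf{(CP)}_\eta$ in reduced form and apply Theorem~\ref{mcrf}. Using the control-to-state operator $\ms$ I pass to the reduced functional $\tilde{\J}(\bg)=\tilde J(\bg)+\kappa j(\bg)$ and regard the problem as (Q) with $Z=\U$, $\mathcal K=\U_{ad}$ (a closed ball, hence convex), smooth part $J=\tilde J$, nonsmooth part $\kappa j$, and no genuine range constraint, that is $Y=\mathcal C=\rea$ and $G\equiv 0$; then $\operatorname{int}\mathcal C\neq\emptyset$ and the linearized Slater condition $G(\bg^\ast)+DG(\bg^\ast)(z_0-\bg^\ast)=0\in\operatorname{int}\mathcal C$ hold trivially. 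The hypotheses of Theorem~\ref{mcrf} are then immediate: $\tilde J$ is Fréchet, hence Gâteaux, differentiable on $\U$ by Theorem~\ref{thm-f-diff}, the chain rule, and the quadratic form of $J$; and $j$ is finite, nonnegative, convex, and --- since $Q$ has finite measure, so that $\U\hookrightarrow L^1(Q;\rea^3)$ continuously --- continuous on $\U$. Because the trivial constraint $G\equiv 0$ forces the multiplier $\wp$ in Theorem~\ref{mcrf} to vanish, one has $\lm_0\neq 0$, which we normalize to $\lm_0=1$; moreover the nonsmooth part of $\tilde{\J}$ is $\kappa j$, so the multiplier $\bar\lambda\in\partial(\kappa j)(\bg^\ast)$ provided by Theorem~\ref{mcrf} can be written $\bar\lambda=\kappa\varsigma$ with $\varsigma\in\partial j(\bg^\ast)$, and \eqref{3rd} becomes
\begin{align}\label{abs-vi}
\langle \tilde J'(\bg^\ast)+\kappa\varsigma,\ \bg-\bg^\ast\rangle_{\U',\U}\ \ge\ 0\qquad\text{for every }\bg\in\U_{ad}.
\end{align}
By the pointwise description of the subdifferential of the $L^1$-norm, $\varsigma=\bg^\ast/|\bg^\ast|$ a.e.\ on $\{\bg^\ast\neq 0\}$ and $|\varsigma|\le 1$ a.e.\ elsewhere; in particular $\varsigma\in L^\infty(Q;\rea^3)$, so $\langle\kappa\varsigma,\bg-\bg^\ast\rangle=\kappa\int_Q\varsigma\cdot(\bg-\bg^\ast)$ is a legitimate pairing because $\bg-\bg^\ast\in L^2(Q)\hookrightarrow L^1(Q)$.

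Next I identify $\tilde J'(\bg^\ast)$ with the adjoint state. The chain-rule computation \eqref{chain rule}--\eqref{gd j}, which is unaffected by $\kappa$ since the $\kappa j$-term is separated off, gives, for $\bg\in\U_{ad}$,
\begin{align}\label{dJ-rep}
\langle\tilde J'(\bg^\ast),\bg-\bg^\ast\rangle
=\beta_1\int_Q(\bv^\ast-\bv_Q)\cdot\bw+\beta_2\int_Q(\vphi^\ast-\vphi_Q)\psi+\beta_3\int_\Omega(\vphi^\ast(T)-\vphi_T)\psi(T)+\beta_4\int_Q\bg^\ast\cdot(\bg-\bg^\ast),
\end{align}
where $(\bw,\psi)=\ms'(\bg^\ast)(\bg-\bg^\ast)$ is the first pair of the solution $(\bw,\psi,\tht,\xi)$ of the linearized system \eqref{lin-system} about $(\bv^\ast,\vphi^\ast,\mu^\ast,w^\ast)$ with datum $\bu=\bg-\bg^\ast$ (Proposition~\ref{exist_lin_sys}, Theorem~\ref{thm-f-diff}). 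The adjoint system \eqref{adj sys} with $\lm_0=1$ is a backward-in-time linear system whose structure, after the substitution $t\mapsto T-t$, parallels that of \eqref{lin-system}; hence a Faedo--Galerkin scheme analogous to the proof of Proposition~\ref{exist_lin_sys}, based on the uniform bounds \eqref{unif_est_weak sol} for the optimal state, yields a unique solution with $\bv^a\in L^2(0,T;\MV_{\td})$, $\vphi^a\in L^2(0,T;W)$, $\mu^a\in L^2(0,T;H)$, $w^a\in L^2(0,T;W)$ (and $\pat\vphi^a$ in a suitable dual space, so that the terminal value $\vphi^a(T)$ makes sense). This is the same adjoint system that already appeared in the case $\kappa=0$.

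The core of the argument is the duality identity. Testing the four equations of \eqref{lin-system} (linearized at the optimum, datum $\bg-\bg^\ast$) by $\bv^a,\vphi^a,\mu^a,w^a$ and the four equations of \eqref{adj sys} by $\bw,\psi,\tht,\xi$, integrating over $(0,T)$, and integrating by parts in space and time --- using $\psi(0)=0$, the terminal condition $\vphi^a(T)=-\beta_3(\vphi^\ast(T)-\vphi_T)$, the boundary conditions \eqref{lin-in-bdry}, and the divergence-free conditions $\td\,\bv^a=\td\,\bw=0$ --- all cross terms cancel exactly as in the computation \eqref{int by part1}--\eqref{int by part2} for the case $\kappa=0$, leaving
\begin{align}\label{adj-id}
\beta_1\int_Q(\bv^\ast-\bv_Q)\cdot\bw+\beta_2\int_Q(\vphi^\ast-\vphi_Q)\psi+\beta_3\int_\Omega(\vphi^\ast(T)-\vphi_T)\psi(T)=-\int_Q\bv^a\cdot(\bg-\bg^\ast).
\end{align}
Substituting \eqref{adj-id} into \eqref{dJ-rep} gives $\langle\tilde J'(\bg^\ast),\bg-\bg^\ast\rangle=\int_Q(\beta_4\bg^\ast-\bv^a)\cdot(\bg-\bg^\ast)$, and inserting this into \eqref{abs-vi} yields precisely \eqref{2nd necessary cond}. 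The main obstacle is not conceptual but bookkeeping: one must check that the test functions used in \eqref{adj sys} are admissible at the Galerkin level (as in Proposition~\ref{exist_lin_sys}, using $\Delta^k v\in V_n\subset W$) and that every boundary, initial and terminal term produced by the integrations by parts either vanishes or cancels against its counterpart; apart from the subdifferential/duality identification of $\varsigma$ recorded above, no ingredient is needed beyond those already established for $\kappa=0$.
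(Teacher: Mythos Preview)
Your proof is correct but follows a genuinely different route from the paper. The paper applies Theorem~\ref{mcrf} to the \emph{unreduced} problem: it takes $Z=\mathcal K=\W\times\U_{ad}$, keeps the state equation as the explicit constraint map $G$ (from \eqref{def-G}), and then identifies the abstract multiplier $\wp\in X_2'$ with the adjoint state, so that the existence of $(\bv^a,\vphi^a,\mu^a,w^a)$ comes for free from the abstract Lagrange principle and \eqref{3rd} directly yields \eqref{ast}, which collapses to \eqref{2nd necessary cond} after the same integrations by parts you perform. You instead work on the \emph{reduced} problem on $\U$, take $G\equiv 0$ so the constraint is vacuous, and obtain the variational inequality \eqref{abs-vi} immediately; the price is that the adjoint state no longer appears as an abstract multiplier, so you must supply its well-posedness separately (which you do, correctly, by pointing to the Faedo--Galerkin argument of Proposition~\ref{exist_lin_sys} applied to the time-reversed linear system). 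Your route makes the application of Theorem~\ref{mcrf} cleaner and sidesteps the somewhat delicate choice $\mathcal C=Y$ in the paper's setup, at the cost of an extra (though routine) existence step for the adjoint; the paper's route packages adjoint existence into the abstract theorem but carries the full state-control pair through the argument.
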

\begin{proof}
    We apply Theorem \ref{mcrf} to proof this result. In the notation of above theorem we take $\mathcal{K}=\W\times\U_{ad}:=\mathcal{Y},$ $Z=\mathcal{Y},$ $Y=X_2,$ $\mathcal{C}=Y$, and $G$ is defined in \eqref{def-G}. Also, $J$  and $j$ are same as defined in \eqref{cost functional}. As $\mathcal{C}=Y$, the whole space, obviously the linearized slater condition satisfied. 
     Thanks to the discussion on subsection 4.3 and Theorem \ref{exist_LM}, all the hypothesis of Theorem \ref{mcrf} satisfied, there exists $\wp\in X_2'$ and  $\varsigma\in \partial j(\bg^\ast)$ satisfying condition \eqref{2nd} and \eqref{3rd} with $\lm_0=1.$ From \eqref{3rd}, we obtain 
     \begin{align*}
         \langle J'(\vk^\ast, \bg^\ast)+\kappa\varsigma, \bg-\bg^\ast\rangle_{\mathcal{Y}', \mathcal{Y}}+\langle G'((\vk^\ast,\bg^\ast)^t)((\ell, \bg-\bg^\ast)^t), \wp\rangle_{\mathcal{Y}'
         , \mathcal{Y}}\geq 0 \quad \forall (\ell, \bg)\in\mathcal{Y}.
     \end{align*}
  Note that here we have adopted the same notation of $\wp, \, \vk, \, \vk^\ast,$ and $\ell$ from \eqref{short-notation}. Using \eqref{gd j} in the above equality gives
     \begin{align}\label{ast}
         &\beta_1\int_{Q}(\bv^\ast - \bv_{Q})\cdot \bw \, dxdt + \beta_2\int_{Q}(\varphi^\ast - \varphi_{Q})\cdot \psi \, dxdt + \beta_3\int_{\Omega} (\varphi^\ast(T) - \varphi_{\Omega})\cdot \psi(T) \, dx \no \\ &  \quad+ \int_Q (\beta_4\bg^\ast+\kappa\varsigma)(\bg - \bg^\ast)+\langle G'((\vk^\ast,\bg^\ast)^t)((\ell, \bg-\bg^\ast)^t), \wp\rangle_{\mathcal{Y}'
         , \mathcal{Y}}\geq 0 \quad \forall (\ell, \bg)\in\mathcal{Y}.
     \end{align}
     Recalling \eqref{der-G} and noting that $\wp$ satisfies the adjoint equation \eqref{adj sys}, expanding the inner product in \eqref{ast}, using integration by parts and after some cancellation we get
     \begin{align*}
         \int_Q(\beta_4\bg^\ast+\kappa\varsigma-\bv^a)\cdot(\bg-\bg^\ast)\geq 0 \quad \text{ for every } \bg\in\U_{ad},
     \end{align*}
     which conclude the theorem.
    \end{proof}
The convex function $j$ in cost functional accounts for the sparsity of optimal control, that is, the possibility that any locally optimal control may vanish in some region of space time cylinder $Q.$ The form of this region depends on the particular choice of the functional $j$. The sparsity property can be deduce from the variational inequality \eqref{2nd necessary cond} and particular form of subdifferential $\partial j$. For the functional $j$ introduced in \eqref{cost functional} and its subdifferential is given by \cite[Section 0.3.2]{IT79}
\begin{align}\label{subdiff j}
\partial j(\bg) = 
\begin{cases}
\{1\} & \text{if } \bg(x,t) > 0 \, \text{ for a.e. } (x,t) \in Q, \\
\{\varsigma \in L^2(Q) : \varsigma(x,t) \in [-1, 1] \} & \text{ if } \bg(x,t) = 0  \text{ for a.e. } (x,t) \in Q, \\
\{-1\} & \text{if } \bg(x,t) < 0 \, \text{ for a.e. } (x,t) \in Q.
\end{cases}
\end{align}
Using this subdifferential in the variational inequality \eqref{2nd necessary cond}, we obtain the following result:
\begin{theorem}
    Let $\bg^\ast\in\U_{ad}$ be the locally optimal control for the problem $\textbf{(CP)}_\eta$ with $\kappa>0$ and functional $j$ as defined in \eqref{cost functional}. Also, $\ms(\bg^\ast)=(\bv^\ast, \vphi^\ast)$ be the first two component of solution of \eqref{CHB} and the adjoint state $(\bv^a, \vphi^a, \mu^a, w^a)$ solving \eqref{adj sys}. Then there exists a function $\varsigma\in\partial j(\bg^\ast)$ satisfying the variational inequality \eqref{2nd necessary cond}, and it also holds that 
    \begin{align}\label{spasty-equ}
      \|\bv^a\|_{L^2(0, T; \MH)}\leq\kappa, \quad\text{if } \bg^\ast=0.
    \end{align}
    Further, optimal control $\bg^\ast$ can be obtain by the projection formula
    \begin{align}\label{pform_oc}
        \bg^\ast=\mathcal{P}_{\U_{ad}}\left(-\beta_4^{-1}(\kappa\varsigma+\bv^a)\right),
    \end{align}
    where $\mathcal{P}_{\U_{ad}}$ is the orthogonal projection of $\U$ onto $\U_{ad}.$
\end{theorem}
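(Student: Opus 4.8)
The plan is to start from the variational inequality \eqref{2nd necessary cond}, which has already been established, and extract the two additional statements: the sparsity bound \eqref{spasty-equ} and the projection formula \eqref{pform_oc}. The ingredients are: the explicit description of $\partial j(\bg^\ast)$ in \eqref{subdiff j}, the fact that $\U_{ad}$ is a closed convex set of the form $\{\bg:\|\bg\|_\U\le M\}$, and elementary convex-analysis arguments (projection characterization) essentially as in \cite{CT20} and \cite{fredi_book}.

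First I would prove \eqref{spasty-equ}. Suppose $\bg^\ast = 0$ a.e.\ in $Q$. Then by \eqref{subdiff j}, $\varsigma$ is any $L^2(Q)$-function with $\varsigma(x,t)\in[-1,1]$ a.e., and \eqref{2nd necessary cond} reads $\int_Q(\kappa\varsigma - \bv^a)\cdot\bg \ge 0$ for every $\bg\in\U_{ad}$, since the term $\beta_4\bg^\ast$ vanishes and $\bg-\bg^\ast = \bg$. Because $0\in\U_{ad}$ is an interior point of the ball (for $M>0$), we may test with $\bg = \pm\epsilon\bv^a/\|\bv^a\|_\U$ for small $\epsilon$ — or, more cleanly, observe that the inequality forces $\kappa\varsigma - \bv^a$ to be "aligned against" every admissible direction, hence $\int_Q(\kappa\varsigma-\bv^a)\cdot\bg\ge 0$ for all $\bg$ in a neighbourhood of $0$, which gives $\kappa\varsigma = \bv^a$ a.e.\ in $Q$. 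Since $\bg^\ast$ is optimal and $\varsigma\in\partial j(\bg^\ast)$, the genuine relation coming from \eqref{3rd} with $\lm_0=1$ is that $\varsigma$ is chosen so that $\beta_4\bg^\ast + \kappa\varsigma - \bv^a$ is the negative of the normal cone element; when $\bg^\ast=0$ this means $\kappa\varsigma = \bv^a$ pointwise a.e., and then $\|\bv^a\|_{L^2(0,T;\MH)} = \kappa\|\varsigma\|_{L^2(0,T;\MH)} \le \kappa$ because $|\varsigma(x,t)|\le 1$ a.e. This yields \eqref{spasty-equ}. (One must be slightly careful about whether $\bv^a$ here is the full $\rea^3$-valued field and $\varsigma$ scalar versus vector-valued; the cost term $\kappa\int_Q|\bg|$ with $|\cdot|$ the Euclidean norm gives $\partial j(\bg^\ast)(x,t) = \bv^a(x,t)/|\bv^a(x,t)|$ where $\bg^\ast\ne 0$ and the closed unit ball of $\rea^3$ where $\bg^\ast=0$, so the bound is the natural vector analogue of \eqref{subdiff j}.)

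Next I would prove the projection formula \eqref{pform_oc}. Rewrite \eqref{2nd necessary cond} as $\int_Q\bigl(\bg^\ast - (-\beta_4^{-1}(\kappa\varsigma+\bv^a))\bigr)\cdot(\bg-\bg^\ast)\,\ge 0$ for all $\bg\in\U_{ad}$, after multiplying through by $\beta_4^{-1}>0$ and regrouping: indeed $\beta_4\bg^\ast + \kappa\varsigma - \bv^a = \beta_4\bigl(\bg^\ast + \beta_4^{-1}(\kappa\varsigma + \bv^a)\bigr) = \beta_4\bigl(\bg^\ast - (-\beta_4^{-1}(\kappa\varsigma+\bv^a))\bigr)$, wait — sign check: $-\beta_4^{-1}(\kappa\varsigma+\bv^a)$ is what we want as the projected point, but \eqref{2nd necessary cond} has $+\kappa\varsigma - \bv^a$, so the target is $-\beta_4^{-1}(\kappa\varsigma - \bv^a)$; this is a minor bookkeeping issue that I would reconcile against the precise sign conventions in \eqref{adj sys} (the adjoint source carries $-\lm_0\beta_1$, etc.), and state \eqref{pform_oc} consistently. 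Granting the correct signs, the inequality $\langle \bg^\ast - u,\ \bg - \bg^\ast\rangle_{L^2}\ge 0$ for all $\bg\in\U_{ad}$ is exactly the variational characterization of $\bg^\ast = \mathcal{P}_{\U_{ad}}(u)$ for the $L^2(0,T;\MH)$-orthogonal projection onto the nonempty closed convex set $\U_{ad}$; this is standard, e.g.\ \cite[Theorem 2.33]{fredi_book}. So $\bg^\ast = \mathcal{P}_{\U_{ad}}\bigl(-\beta_4^{-1}(\kappa\varsigma + \bv^a)\bigr)$ (with the sign fixed), which is \eqref{pform_oc}.

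The main obstacle I anticipate is not analytical depth but bookkeeping precision: getting the signs and the scalar-versus-vector nature of $\varsigma$ exactly right so that \eqref{spasty-equ}, the projection formula, and the earlier \eqref{2nd necessary cond} are mutually consistent, and making the passage from \eqref{2nd necessary cond} to the pointwise relation $\kappa\varsigma = \bv^a$ (on $\{\bg^\ast = 0\}$) rigorous — this requires either invoking that $\bg^\ast$ lies in the interior of $\U_{ad}$ when $\bg^\ast = 0$ (true since $M>0$), so that the variational inequality holds for perturbations in both $\pm$ directions and the normal cone is trivial, or else arguing directly from \eqref{3rd} that the chosen subgradient $\varsigma$ must satisfy $\kappa\varsigma + \bv^a \in -N_{\U_{ad}}(\bg^\ast)$, which collapses to $\{0\}$ at an interior point. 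Everything else is a routine application of the projection theorem and the explicit subdifferential \eqref{subdiff j}.
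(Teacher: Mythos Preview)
For the projection formula \eqref{pform_oc}, your approach coincides with the paper's: both simply recognize \eqref{2nd necessary cond} as the variational characterization of the orthogonal projection onto the closed convex set $\U_{ad}$ and cite \cite[Theorem~2.33]{fredi_book}. Your flag about the sign of $\bv^a$ is justified; the paper does not address it.

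For the sparsity bound \eqref{spasty-equ}, your route is different from the paper's and cleaner. The paper tests \eqref{2nd necessary cond} with $\bg=0$ and with $\bg=M\bg^\ast/\|\bg^\ast\|_\U$ to obtain the orthogonality relation $\int_Q(\beta_4\bg^\ast+\kappa\varsigma-\bv^a)\cdot\bg^\ast=0$, then asserts this ``immediately gives'' $\|\beta_4\bg^\ast+\bv^a\|_{L^2(Q)}\le\kappa$ and specializes to $\bg^\ast=0$. You instead observe that $\bg^\ast=0$ lies in the interior of $\U_{ad}$, so the normal cone is $\{0\}$ and \eqref{2nd necessary cond} forces the pointwise identity $\kappa\varsigma=\bv^a$. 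This is the right idea and avoids the paper's test function $M\bg^\ast/\|\bg^\ast\|_\U$, which is not even defined when $\bg^\ast=0$.

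There is, however, a genuine gap in your final step, and it is the same gap that the paper glosses over. From $\bv^a=\kappa\varsigma$ and $|\varsigma(x,t)|\le 1$ a.e.\ you write $\|\bv^a\|_{L^2(0,T;\MH)}=\kappa\|\varsigma\|_{L^2(0,T;\MH)}\le\kappa$. This is false: the pointwise bound $|\varsigma|\le 1$ gives only $\|\varsigma\|_{L^\infty(Q)}\le 1$, hence $\|\varsigma\|_{L^2(Q)}\le |Q|^{1/2}$, not $\le 1$. What your argument actually proves is the pointwise estimate $|\bv^a(x,t)|\le\kappa$ a.e.\ in $Q$, which is the standard (and correct) sparsity conclusion in the literature. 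The $L^2$-norm inequality \eqref{spasty-equ} as written does not follow from your reasoning, nor does it follow from the paper's orthogonality relation. So the gap is real, but it stems from the theorem statement itself rather than from a defect in your strategy; you should state and prove the pointwise bound instead.
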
 
\begin{proof}
   The projection formula \eqref{pform_oc} is a direct consequence of \eqref{2nd necessary cond} (cf. \cite[Theorem 2.33]{fredi_book}). It remains to show \eqref{spasty-equ}. In \eqref{2nd necessary cond}, we take $\bg=0$ and $\bg=\frac{M\bg^\ast}{\|\bg^\ast\|_{\U}}$, respectively, we deduce that 
   \begin{align}
       \int_Q(\beta_4\bg^\ast+\kappa\varsigma-\bv^a)\cdot\bg^\ast=0.
   \end{align}
  This immediately gives
   \begin{align}
      \|\beta_4\bg^\ast+\bv^a\|_{L^2(Q)}\leq\kappa. 
   \end{align}
   Then, clearly $\|\bv^a\|_{L^2(Q)}\leq \kappa$ if $\bg^\ast=0.$ This completes the proof.
\end{proof}
\begin{remark} In this discussion control is taken in first equation of \eqref{CHB}. The same analysis holds by taking control in second equation of \eqref{CHB}. In this case, the variational inequality \eqref{2nd necessary cond} takes the form 
\begin{align*}
    \int_Q(\beta_4g^\ast+\kappa\varsigma-\vphi^a)(g-g^\ast)\geq 0 \text{ for every }g\in\U_{ad}
\end{align*} where the control and admissible control space taken as $\U=L^2(0, T; H)$, $\U_{ad}=\{g\in\U:\|g\|_{\U}\leq M\}$, respectively, with $M$ being a suitable positive constant. 
\end{remark}


\bibliographystyle{plain} 
\bibliography{mybibliography}

@article{H,
  title = {Theory of dynamic critical phenomena},
  author = {Hohenberg, P. C. and Halperin, B. I.},
  journal = {Rev. Mod. Phys.},
  volume = {49},
  issue = {3},
  pages = {435--479},
  numpages = {0},
  year = {1977},
  month = {Jul},
  publisher = {American Physical Society},
  doi = {10.1103/RevModPhys.49.435},
  url = {https://link.aps.org/doi/10.1103/RevModPhys.49.435}
}

@article{arma,
  title={On a diffuse interface model for two-phase flows of viscous, incompressible fluids with matched densities},
  author={Abels, Helmut},
  journal={Archive for rational mechanics and analysis},
  volume={194},
  number={2},
  pages={463--506},
  year={2009},
  publisher={Springer}
}

@misc{CGSS25,
      title={On Brinkman flows with curvature-induced phase separation in binary mixtures}, 
      author={Pierluigi Colli and Gianni Gilardi and Andrea Signori and J\"urgen Sprekels},
      year={2025},
      eprint={2509.20282},
      archivePrefix={arXiv},
      primaryClass={math.AP},
      url={https://arxiv.org/abs/2509.20282}, 
}

@book {fredi_book,
    AUTHOR = {Tr\"{o}ltzsch, Fredi},
     TITLE = {Optimal control of partial differential equations},
    SERIES = {Graduate Studies in Mathematics},
    VOLUME = {112},
      NOTE = {Theory, methods and applications,
              Translated from the 2005 German original by J\"{u}rgen Sprekels},
 PUBLISHER = {American Mathematical Society, Providence, RI},
      YEAR = {2010},
     PAGES = {xvi+399},
      ISBN = {978-0-8218-4904-0},
   MRCLASS = {49-02 (35J61 35K59 49J20 49K20 93C20)},
  MRNUMBER = {2583281},
MRREVIEWER = {Irena Lasiecka},
       DOI = {10.1090/gsm/112},
       URL = {https://doi.org/10.1090/gsm/112},
}

@book {fursikov_book,
    AUTHOR = {Fursikov, Andrei V.},
     TITLE = {Optimal control of distributed systems. Theory and
              applications},
    SERIES = {Translations of Mathematical Monographs},
    VOLUME = {187},
      NOTE = {Translated from the 1999 Russian original by Tamara
              Rozhkovskaya},
 PUBLISHER = {American Mathematical Society, Providence, RI},
      YEAR = {2000},
     PAGES = {xiv+305},
      ISBN = {0-8218-1382-X},
   MRCLASS = {49-02 (35B37 35Q30 49J20 49K20 76D55 93C20)},
  MRNUMBER = {1726442},
MRREVIEWER = {Irena Lasiecka},
       DOI = {10.1090/mmono/187},
       URL = {https://doi.org/10.1090/mmono/187},
}

@book {Temam,
    AUTHOR = {Temam, Roger},
     TITLE = {Navier-{S}tokes equations},
    SERIES = {Studies in Mathematics and its Applications},
    VOLUME = {2},
   EDITION = {Third},
      NOTE = {Theory and numerical analysis,
              With an appendix by F. Thomasset},
 PUBLISHER = {North-Holland Publishing Co., Amsterdam},
      YEAR = {1984},
     PAGES = {xii+526},
      ISBN = {0-444-87558-1; 0-444-87559-X},
   MRCLASS = {76-02 (35Q10 65-02 76-08 76D05)},
  MRNUMBER = {769654},
}

@article {Fursikov_bndry,
    AUTHOR = {Fursikov, Andrei V. and Gunzburger, Max D. and Hou, Steven L. },
     TITLE = {Boundary value problems and optimal boundary control for the
              {N}avier-{S}tokes system: the two-dimensional case},
   JOURNAL = {SIAM J. Control Optim.},
  FJOURNAL = {SIAM Journal on Control and Optimization},
    VOLUME = {36},
      YEAR = {1998},
    NUMBER = {3},
     PAGES = {852--894},
      ISSN = {0363-0129},
   MRCLASS = {76D05 (35Q30 49J20)},
  MRNUMBER = {1613873},
MRREVIEWER = {Sergey V. Nikitin},
       DOI = {10.1137/S0363012994273374},
       URL = {https://doi.org/10.1137/S0363012994273374},
}

@misc{MTS,
      title={On the Cahn-Hilliard-Navier-Stokes equations with Nonhomogeneous Boundary}, 
      author={Manika Bag and Tania Biswas and Sheetal Dharmatti},
      year={2022},
      eprint={2206.15040},
      archivePrefix={arXiv},
      primaryClass={math.AP}
}

@book {K23,
    AUTHOR = {Kesavan, Srinivasan},
     TITLE = {Functional analysis},
    SERIES = {Texts and Readings in Mathematics},
    VOLUME = {52},
      NOTE = {Second edition [of  2475358]},
 PUBLISHER = {Springer, Singapore; Hindustan Book Agency, New Delhi},
      YEAR = {[2023] \copyright 2023},
     PAGES = {xv+268},
      ISBN = {978-981-19-7633-9; 978-819-51-9613-5},
   MRCLASS = {46-01 (47-01)},
  MRNUMBER = {4573200},
       DOI = {10.1007/978-981-19-7633-9},
       URL = {https://doi.org/10.1007/978-981-19-7633-9},
}

@article {JSimon,
    AUTHOR = {Simon, Jacques},
     TITLE = {Compact sets in the space {$L^p(0,T;B)$}},
   JOURNAL = {Ann. Mat. Pura Appl. (4)},
  FJOURNAL = {Annali di Matematica Pura ed Applicata. Serie Quarta},
    VOLUME = {146},
      YEAR = {1987},
     PAGES = {65--96},
      ISSN = {0003-4622},
   MRCLASS = {46E40 (46E30)},
  MRNUMBER = {916688},
MRREVIEWER = {James\ Bell\ Cooper},
       DOI = {10.1007/BF01762360},
       URL = {https://doi.org/10.1007/BF01762360},
}

@article {CT20,
    AUTHOR = {Casas, Eduardo and Tr\"{o}ltzsch, Fredi},
     TITLE = {State-constrained semilinear elliptic optimization problems
              with unrestricted sparse controls},
   JOURNAL = {Math. Control Relat. Fields},
  FJOURNAL = {Mathematical Control and Related Fields},
    VOLUME = {10},
      YEAR = {2020},
    NUMBER = {3},
     PAGES = {527--546},
      ISSN = {2156-8472},
   MRCLASS = {49K20 (35J61 49J20)},
  MRNUMBER = {4128838},
MRREVIEWER = {Monica Motta},
       DOI = {10.3934/mcrf.2020009},
       URL = {https://doi.org/10.3934/mcrf.2020009},
}

@article {BCG15,
    AUTHOR = {Bosia, Stefano and Conti, Monica and Grasselli, Maurizio},
     TITLE = {On the {C}ahn-{H}illiard-{B}rinkman system},
   JOURNAL = {Commun. Math. Sci.},
  FJOURNAL = {Communications in Mathematical Sciences},
    VOLUME = {13},
      YEAR = {2015},
    NUMBER = {6},
     PAGES = {1541--1567},
      ISSN = {1539-6746},
   MRCLASS = {35Q35 (35B30 35B40 35B41 35D30 37L30 76D27 76S05)},
  MRNUMBER = {3351441},
MRREVIEWER = {Mark Thompson},
       DOI = {10.4310/CMS.2015.v13.n6.a9},
       URL = {https://doi.org/10.4310/CMS.2015.v13.n6.a9},
}

@article {YL19,
    AUTHOR = {You, Bo and Li, Fang},
     TITLE = {Optimal distributed control of the
              {C}ahn-{H}illiard-{B}rinkman system with regular potential},
   JOURNAL = {Nonlinear Anal.},
  FJOURNAL = {Nonlinear Analysis. Theory, Methods \& Applications. An
              International Multidisciplinary Journal},
    VOLUME = {182},
      YEAR = {2019},
     PAGES = {226--247},
      ISSN = {0362-546X},
   MRCLASS = {49J20 (49J50 49K20 82C26)},
  MRNUMBER = {3896180},
MRREVIEWER = {Dan Tiba},
       DOI = {10.1016/j.na.2018.12.014},
       URL = {https://doi.org/10.1016/j.na.2018.12.014},
}

@article {LY21,
    AUTHOR = {Li, Fang and You, Bo},
     TITLE = {On the dimension of global attractor for the
              {C}ahn-{H}illiard-{B}rinkman system with dynamic boundary
              conditions},
   JOURNAL = {Discrete Contin. Dyn. Syst. Ser. B},
  FJOURNAL = {Discrete and Continuous Dynamical Systems. Series B. A Journal
              Bridging Mathematics and Sciences},
    VOLUME = {26},
      YEAR = {2021},
    NUMBER = {12},
     PAGES = {6387--6403},
      ISSN = {1531-3492},
   MRCLASS = {35B41 (37L30)},
  MRNUMBER = {4342167},
MRREVIEWER = {Nadir Maaroufi},
       DOI = {10.3934/dcdsb.2021024},
       URL = {https://doi.org/10.3934/dcdsb.2021024},
}

@article {CKSS24,
    AUTHOR = {Colli, Pierluigi and Knopf, Patrik and Schimperna, Giulio and
              Signori, Andrea},
     TITLE = {Two-phase flows through porous media described by a
              {C}ahn-{H}illiard-{B}rinkman model with dynamic boundary
              conditions},
   JOURNAL = {J. Evol. Equ.},
  FJOURNAL = {Journal of Evolution Equations},
    VOLUME = {24},
      YEAR = {2024},
    NUMBER = {4},
     PAGES = {Paper No. 85, 55},
      ISSN = {1424-3199},
   MRCLASS = {35D30 (35B65 35K35 35K86 76D03 76S05)},
  MRNUMBER = {4805011},
MRREVIEWER = {Guangyi Hong},
       DOI = {10.1007/s00028-024-00999-y},
       URL = {https://doi.org/10.1007/s00028-024-00999-y},
}

@article {CGSS23,
    AUTHOR = {Colli, Pierluigi and Gilardi, Gianni and Signori, Andrea and
              Sprekels, J\"{u}rgen},
     TITLE = {Cahn-{H}illiard-{B}rinkman model for tumor growth with
              possibly singular potentials},
   JOURNAL = {Nonlinearity},
  FJOURNAL = {Nonlinearity},
    VOLUME = {36},
      YEAR = {2023},
    NUMBER = {8},
     PAGES = {4470--4500},
      ISSN = {0951-7715},
   MRCLASS = {35K86 (35K35 35Q35 35Q92 92C17 92C32)},
  MRNUMBER = {4619239},
       DOI = {10.1088/1361-6544/ace2a7},
       URL = {https://doi.org/10.1088/1361-6544/ace2a7},
}

@article {KS22,
    AUTHOR = {Knopf, Patrik and Signori, Andrea},
     TITLE = {Existence of weak solutions to multiphase
              {C}ahn-{H}illiard-{D}arcy and {C}ahn-{H}illiard-{B}rinkman
              models for stratified tumor growth with chemotaxis and general
              source terms},
   JOURNAL = {Comm. Partial Differential Equations},
  FJOURNAL = {Communications in Partial Differential Equations},
    VOLUME = {47},
      YEAR = {2022},
    NUMBER = {2},
     PAGES = {233--278},
      ISSN = {0360-5302},
   MRCLASS = {35D30 (35K35 35K86 35Q92 76D07 92C17 92C50)},
  MRNUMBER = {4378607},
       DOI = {10.1080/03605302.2021.1966803},
       URL = {https://doi.org/10.1080/03605302.2021.1966803},
}

@article {EGN21,
    AUTHOR = {Ebenbeck, Matthias and Garcke, Harald and N\"{u}rnberg, Robert},
     TITLE = {Cahn-{H}illiard-{B}rinkman systems for tumour growth},
   JOURNAL = {Discrete Contin. Dyn. Syst. Ser. S},
  FJOURNAL = {Discrete and Continuous Dynamical Systems. Series S},
    VOLUME = {14},
      YEAR = {2021},
    NUMBER = {11},
     PAGES = {3989--4033},
      ISSN = {1937-1632},
   MRCLASS = {92C42 (35K35)},
  MRNUMBER = {4342218},
       DOI = {10.3934/dcdss.2021034},
       URL = {https://doi.org/10.3934/dcdss.2021034},
}

@article {EK19,
    AUTHOR = {Ebenbeck, Matthias and Knopf, Patrik},
     TITLE = {Optimal medication for tumors modeled by a
              {C}ahn-{H}illiard-{B}rinkman equation},
   JOURNAL = {Calc. Var. Partial Differential Equations},
  FJOURNAL = {Calculus of Variations and Partial Differential Equations},
    VOLUME = {58},
      YEAR = {2019},
    NUMBER = {4},
     PAGES = {Paper No. 131, 31},
      ISSN = {0944-2669},
   MRCLASS = {35Q35 (49K20 76D07 92C50)},
  MRNUMBER = {3978952},
MRREVIEWER = {Rosanna Manzo},
       DOI = {10.1007/s00526-019-1579-z},
       URL = {https://doi.org/10.1007/s00526-019-1579-z},
}

@article {EG19,
    AUTHOR = {Ebenbeck, Matthias and Garcke, Harald},
     TITLE = {Analysis of a {C}ahn-{H}illiard-{B}rinkman model for tumour
              growth with chemotaxis},
   JOURNAL = {J. Differential Equations},
  FJOURNAL = {Journal of Differential Equations},
    VOLUME = {266},
      YEAR = {2019},
    NUMBER = {9},
     PAGES = {5998--6036},
      ISSN = {0022-0396},
   MRCLASS = {35Q35 (35D30 35Q92 76D07 92C17)},
  MRNUMBER = {3912773},
MRREVIEWER = {Nikolay G. Kuznetsov},
       DOI = {10.1016/j.jde.2018.10.045},
       URL = {https://doi.org/10.1016/j.jde.2018.10.045},
}

@article {DFG16,
    AUTHOR = {Della Porta, Francesco and Grasselli, Maurizio},
     TITLE = {On the nonlocal {C}ahn-{H}illiard-{B}rinkman and
              {C}ahn-{H}illiard-{H}ele-{S}haw systems},
   JOURNAL = {Commun. Pure Appl. Anal.},
  FJOURNAL = {Communications on Pure and Applied Analysis},
    VOLUME = {15},
      YEAR = {2016},
    NUMBER = {2},
     PAGES = {299--317},
      ISSN = {1534-0392},
   MRCLASS = {35Q35 (35D30 76D27 76S05)},
  MRNUMBER = {3461619},
MRREVIEWER = {Laurent Chupin},
       DOI = {10.3934/cpaa.2016.15.299},
       URL = {https://doi.org/10.3934/cpaa.2016.15.299},
}

@article {DM21,
    AUTHOR = {Dharmatti, Sheetal and Mahendranath Perisetti, Lakshmi Naga},
     TITLE = {Nonlocal {C}ahn-{H}illiard-{B}rinkman system with regular
              potential: regularity and optimal control},
   JOURNAL = {J. Dyn. Control Syst.},
  FJOURNAL = {Journal of Dynamical and Control Systems},
    VOLUME = {27},
      YEAR = {2021},
    NUMBER = {2},
     PAGES = {221--246},
      ISSN = {1079-2724},
   MRCLASS = {49J50 (35Q35 49K20 49N60 76S05 76T06)},
  MRNUMBER = {4231667},
MRREVIEWER = {Xiaodong Yan},
       DOI = {10.1007/s10883-020-09490-6},
       URL = {https://doi.org/10.1007/s10883-020-09490-6},
}

@article {M15,
    AUTHOR = {Miranville, Alain},
     TITLE = {Sixth-order {C}ahn-{H}illiard equations with singular
              nonlinear terms},
   JOURNAL = {Appl. Anal.},
  FJOURNAL = {Applicable Analysis. An International Journal},
    VOLUME = {94},
      YEAR = {2015},
    NUMBER = {10},
     PAGES = {2133--2146},
      ISSN = {0003-6811},
   MRCLASS = {35K25 (35B30 35B45 35K59 35K67)},
  MRNUMBER = {3384037},
       DOI = {10.1080/00036811.2014.972384},
       URL = {https://doi.org/10.1080/00036811.2014.972384},
}

@article {SW20,
    AUTHOR = {Schimperna, Giulio and Wu, Hao},
     TITLE = {On a class of sixth-order {C}ahn-{H}illiard-type equations
              with logarithmic potential},
   JOURNAL = {SIAM J. Math. Anal.},
  FJOURNAL = {SIAM Journal on Mathematical Analysis},
    VOLUME = {52},
      YEAR = {2020},
    NUMBER = {5},
     PAGES = {5155--5195},
      ISSN = {0036-1410},
   MRCLASS = {35K35 (35B30 35B41 35B65 35K55 47H05)},
  MRNUMBER = {4164495},
       DOI = {10.1137/19M1290541},
       URL = {https://doi.org/10.1137/19M1290541},
}

@article {MZ20,
    AUTHOR = {Meng, Haichao and Zhao, Xiaopeng},
     TITLE = {Well-posedness of solutions for sixth-order {C}ahn-{H}illiard
              equation arising in oil-water-surfactant mixtures},
   JOURNAL = {J. Partial Differ. Equ.},
  FJOURNAL = {Journal of Partial Differential Equations},
    VOLUME = {33},
      YEAR = {2020},
    NUMBER = {3},
     PAGES = {193--207},
      ISSN = {1000-940X},
   MRCLASS = {35K55 (35A01 35B45)},
  MRNUMBER = {4218049},
       DOI = {10.4208/jpde},
       URL = {https://doi.org/10.4208/jpde},
}

@article {Z21,
    AUTHOR = {Zhao, Xiaopeng},
     TITLE = {On the {C}auchy problem of a sixth-order {C}ahn-{H}illiard
              equation arising in oil-water-surfactant mixtures},
   JOURNAL = {Asymptot. Anal.},
  FJOURNAL = {Asymptotic Analysis},
    VOLUME = {122},
      YEAR = {2021},
    NUMBER = {3-4},
     PAGES = {201--224},
      ISSN = {0921-7134},
   MRCLASS = {35B40 (35Q82)},
  MRNUMBER = {4242827},
       DOI = {10.3233/asy-201616},
       URL = {https://doi.org/10.3233/asy-201616},
}

@misc{SW25,
      title={Optimal Control of a Navier-Stokes-Cahn-Hilliard System for Membrane-fluid Interaction}, 
      author={Andrea Signori and Hao Wu},
      year={2025},
      eprint={2509.22069},
      archivePrefix={arXiv},
      primaryClass={math.AP},
      url={https://arxiv.org/abs/2509.22069}, 
}

@article {CST24,
    AUTHOR = {Colli, Pierluigi and Sprekels, J\"{u}rgen and Tr\"{o}ltzsch, Fredi},
     TITLE = {Optimality conditions for sparse optimal control of viscous
              {C}ahn-{H}illiard systems with logarithmic potential},
   JOURNAL = {Appl. Math. Optim.},
  FJOURNAL = {Applied Mathematics and Optimization},
    VOLUME = {90},
      YEAR = {2024},
    NUMBER = {2},
     PAGES = {Paper No. 47, 48},
      ISSN = {0095-4616},
   MRCLASS = {49K20 (35K51 49N90 93C20)},
  MRNUMBER = {4808227},
MRREVIEWER = {Indira Mishra},
       DOI = {10.1007/s00245-024-10187-6},
       URL = {https://doi.org/10.1007/s00245-024-10187-6},
}

@article {S09,
    AUTHOR = {Stadler, Georg},
     TITLE = {Elliptic optimal control problems with {$L^1$}-control cost
              and applications for the placement of control devices},
   JOURNAL = {Comput. Optim. Appl.},
  FJOURNAL = {Computational Optimization and Applications. An International
              Journal},
    VOLUME = {44},
      YEAR = {2009},
    NUMBER = {2},
     PAGES = {159--181},
      ISSN = {0926-6003},
   MRCLASS = {49K20 (49M37)},
  MRNUMBER = {2556849},
MRREVIEWER = {Boris Vexler},
       DOI = {10.1007/s10589-007-9150-9},
       URL = {https://doi.org/10.1007/s10589-007-9150-9},
}

@article {ST24,
    AUTHOR = {Sprekels, J\"{u}rgen and Tr\"{o}ltzsch, Fredi},
     TITLE = {Second-order sufficient conditions in the sparse optimal
              control of a phase field tumor growth model with logarithmic
              potential},
   JOURNAL = {ESAIM Control Optim. Calc. Var.},
  FJOURNAL = {ESAIM. Control, Optimisation and Calculus of Variations},
    VOLUME = {30},
      YEAR = {2024},
     PAGES = {Paper No. 13, 25},
      ISSN = {1292-8119},
   MRCLASS = {49K20 (35K57 37N25 49J50 49J52 49K40)},
  MRNUMBER = {4710013},
MRREVIEWER = {Leszek Gasi\'{n}ski},
       DOI = {10.1051/cocv/2023084},
       URL = {https://doi.org/10.1051/cocv/2023084},
}

@article {ST23,
    AUTHOR = {Sprekels, J\"{u}rgen and Tr\"{o}ltzsch, Fredi},
     TITLE = {Second-order sufficient conditions for sparse optimal control
              of singular {A}llen-{C}ahn systems with dynamic boundary
              conditions},
   JOURNAL = {Discrete Contin. Dyn. Syst. Ser. S},
  FJOURNAL = {Discrete and Continuous Dynamical Systems. Series S},
    VOLUME = {16},
      YEAR = {2023},
    NUMBER = {12},
     PAGES = {3784--3812},
      ISSN = {1937-1632},
   MRCLASS = {49K20 (35K20 35K55 49J50 49J52)},
  MRNUMBER = {4677073},
MRREVIEWER = {Natesan Barani Balan},
       DOI = {10.3934/dcdss.2023163},
       URL = {https://doi.org/10.3934/dcdss.2023163},
}

@article {H23,
    AUTHOR = {Hoppe, Fabian},
     TITLE = {Sparse optimal control of a quasilinear elliptic {PDE} in
              measure spaces},
   JOURNAL = {Math. Control Relat. Fields},
  FJOURNAL = {Mathematical Control and Related Fields},
    VOLUME = {13},
      YEAR = {2023},
    NUMBER = {4},
     PAGES = {1556--1576},
      ISSN = {2156-8472},
   MRCLASS = {49K20 (35J62 35R06 49J52)},
  MRNUMBER = {4619943},
       DOI = {10.3934/mcrf.2022049},
       URL = {https://doi.org/10.3934/mcrf.2022049},
}

@article {Canon99,
	AUTHOR = {Cannon, John R. and Ewing, Richard E. and He, Yinnian and Lin, Yanping},
	TITLE = {A modified nonlinear {G}alerkin method for the viscoelastic
	fluid motion equations},
	JOURNAL = {Internat. J. Engrg. Sci.},
	FJOURNAL = {International Journal of Engineering Science},
	VOLUME = {37},
	YEAR = {1999},
	NUMBER = {13},
	PAGES = {1643--1662},
	ISSN = {0020-7225},
	MRCLASS = {76A10 (35Q35 65M70 76M25)},
	MRNUMBER = {1711006},
	DOI = {10.1016/S0020-7225(98)00142-6},
	URL = {https://doi.org/10.1016/S0020-7225(98)00142-6},
}

@book {IT79,
    AUTHOR = {Ioffe, A. D. and Tihomirov, V. M.},
     TITLE = {Theory of extremal problems},
    SERIES = {Studies in Mathematics and its Applications},
    VOLUME = {6},
      NOTE = {Translated from the Russian by Karol Makowski},
 PUBLISHER = {North-Holland Publishing Co., Amsterdam-New York},
      YEAR = {1979},
     PAGES = {xii+460},
      ISBN = {0-444-85167-4},
   MRCLASS = {49-02 (46G99)},
  MRNUMBER = {528295},
}

@article {WZ13,
    AUTHOR = {Wang, Xiaoming. and Zhang, Zhang},
     TITLE = {Well-posedness of the {H}ele-{S}haw-{C}ahn-{H}illiard system},
   JOURNAL = {Ann. Inst. H. Poincar\'{e} C Anal. Non Lin\'{e}aire},
  FJOURNAL = {Annales de l'Institut Henri Poincar\'{e} C. Analyse Non
              Lin\'{e}aire},
    VOLUME = {30},
      YEAR = {2013},
    NUMBER = {3},
     PAGES = {367--384},
      ISSN = {0294-1449,1873-1430},
   MRCLASS = {35Q35 (35B30 35B44 76D27)},
  MRNUMBER = {3061427},
MRREVIEWER = {J\'{a}uber\ Cavalcante\ Oliveira},
       DOI = {10.1016/j.anihpc.2012.06.003},
       URL = {https://doi.org/10.1016/j.anihpc.2012.06.003},
}

@article {GGW18,
    AUTHOR = {Giorgini, Andrea and Grasselli, Maurizio. and Wu, Hao},
     TITLE = {The {C}ahn-{H}illiard-{H}ele-{S}haw system with singular
              potential},
   JOURNAL = {Ann. Inst. H. Poincar\'{e} C Anal. Non Lin\'{e}aire},
  FJOURNAL = {Annales de l'Institut Henri Poincar\'{e} C. Analyse Non
              Lin\'{e}aire},
    VOLUME = {35},
      YEAR = {2018},
    NUMBER = {4},
     PAGES = {1079--1118},
      ISSN = {0294-1449,1873-1430},
   MRCLASS = {35Q35 (35B30 35B40 35B41 76D27)},
  MRNUMBER = {3795027},
       DOI = {10.1016/j.anihpc.2017.10.002},
       URL = {https://doi.org/10.1016/j.anihpc.2017.10.002},
}

@article {MGS17,
    AUTHOR = {Ma, Caochuan and Gu, Weijiang and Sun, Jianzhu},
     TITLE = {Global well-posedness for the 2{D}
              {C}ahn-{H}illiard-{B}oussinesq and a related system on bounded
              domains},
   JOURNAL = {Bound. Value Probl.},
  FJOURNAL = {Boundary Value Problems},
      YEAR = {2017},
     PAGES = {Paper No. 119, 12},
      ISSN = {1687-2762,1687-2770},
   MRCLASS = {76D03 (35B30 35Q35 76D05)},
  MRNUMBER = {3687411},
MRREVIEWER = {Luigi\ Carlo\ Berselli},
       DOI = {10.1186/s13661-017-0850-5},
       URL = {https://doi.org/10.1186/s13661-017-0850-5},
}

@misc{BDM24,
      title={Well-posedness of three-dimensional Damped Cahn-Hilliard-Navier-Stokes Equations}, 
      author={Manika Bag and Sheetal Dharmatti and Manil T Mohan},
      year={2024},
      eprint={2410.13731},
      archivePrefix={arXiv},
      primaryClass={math.AP},
      url={https://arxiv.org/abs/2410.13731}, 
}

@article {WWL09,
    AUTHOR = {Wise, S. M. and Wang, C. and Lowengrub, J. S.},
     TITLE = {An energy-stable and convergent finite-difference scheme for
              the phase field crystal equation},
   JOURNAL = {SIAM J. Numer. Anal.},
  FJOURNAL = {SIAM Journal on Numerical Analysis},
    VOLUME = {47},
      YEAR = {2009},
    NUMBER = {3},
     PAGES = {2269--2288},
      ISSN = {0036-1429,1095-7170},
   MRCLASS = {65M06 (35B35 35G25 65M12 65M15 82D25)},
  MRNUMBER = {2519603},
MRREVIEWER = {Laurent\ E.\ Gosse},
       DOI = {10.1137/080738143},
       URL = {https://doi.org/10.1137/080738143},
}

@article {WW11,
    AUTHOR = {Wang, C. and Wise, S. M.},
     TITLE = {An energy stable and convergent finite-difference scheme for
              the modified phase field crystal equation},
   JOURNAL = {SIAM J. Numer. Anal.},
  FJOURNAL = {SIAM Journal on Numerical Analysis},
    VOLUME = {49},
      YEAR = {2011},
    NUMBER = {3},
     PAGES = {945--969},
      ISSN = {0036-1429,1095-7170},
   MRCLASS = {65M06 (35A15 35L75 65M12 65M15)},
  MRNUMBER = {2802554},
MRREVIEWER = {Bo\v{s}ko\ S.\ Jovanovi\'{c}},
       DOI = {10.1137/090752675},
       URL = {https://doi.org/10.1137/090752675},
}

@article {SP13,
    AUTHOR = {Schimperna, Giulio and Paw\l ow, Irena},
     TITLE = {A {C}ahn-{H}illiard equation with singular diffusion},
   JOURNAL = {J. Differential Equations},
  FJOURNAL = {Journal of Differential Equations},
    VOLUME = {254},
      YEAR = {2013},
    NUMBER = {2},
     PAGES = {779--803},
      ISSN = {0022-0396,1090-2732},
   MRCLASS = {35K35 (35A01 35A09 35K67 82D60)},
  MRNUMBER = {2990051},
MRREVIEWER = {Yakov\ Yakubov},
       DOI = {10.1016/j.jde.2012.09.018},
       URL = {https://doi.org/10.1016/j.jde.2012.09.018},
}

@article {SPsima13,
    AUTHOR = {Schimperna, Giulio and Paw\l ow, Irena},
     TITLE = {On a class of {C}ahn-{H}illiard models with nonlinear
              diffusion},
   JOURNAL = {SIAM J. Math. Anal.},
  FJOURNAL = {SIAM Journal on Mathematical Analysis},
    VOLUME = {45},
      YEAR = {2013},
    NUMBER = {1},
     PAGES = {31--63},
      ISSN = {0036-1410,1095-7154},
   MRCLASS = {35K35 (35A01 35K55)},
  MRNUMBER = {3032968},
MRREVIEWER = {Christopher\ P.\ Grant},
       DOI = {10.1137/110835608},
       URL = {https://doi.org/10.1137/110835608},
}
\end{document}